\numberwithin{equation}{section}
\def\p{\partial }
\def\L{\mathcal L}
\def\M{\mathcal M}
\def\F{\mathcal F}
\def\R{\Bbb R}
\def\Om{\Omega}
\def\pom{\p  \Omega}
\def\I{\mathcal I}
\def\D{\mathcal D}
\def\A{\mathcal A}
\def\B{\mathcal B}
\def\P{\mathcal P}
\def\G{\mathcal G}
\def\T{\mathcal T}
\newtheorem{Proposition}{Proposition}[section]
\newtheorem{Theorem}[Proposition]{Theorem}
\newtheorem{Lemma}[Proposition]{Lemma}
\newtheorem{Remark}[Proposition]{Remark}
\newtheorem{Example}[Proposition]{Example}
\newtheorem{Definition}[Proposition]{Definition}
\title  {Boundary $C^{2, \alpha}$ Regularity for the Oblique  Boundary Value Problem of  Monge-Amp\`ere Equations   }\thanks{This work was supported by NSFC   12141103.}
\begin{document}

\address{Huaiyu Jian: Department of Mathematical Sciences, Tsinghua University, Beijing 100084, China.}

 \address{Xushan Tu: Department of Mathematics,  The Hong Kong University of Science and Technology, Clear Water Bay, Kowloon, Hong Kong.}



\email{hjian@tsinghua.edu.cn; \ \  maxstu@ust.hk  }


\bibliographystyle{plain}

%
\maketitle

\baselineskip=15.8pt
\parskip=3pt

\centerline {\bf  Huaiyu Jian}
\centerline {Department of Mathematical sciences, Tsinghua University}
\centerline {Beijing 100084, China}

\vskip10pt

 \centerline { \bf Xushan Tu}
\centerline {Department of Mathematics, The Hong Kong University of Science and Technology }
\centerline {Clear Water Bay, Kowloon, Hong Kong}

  \vskip 15pt

\begin{abstract}
   We study the good shape property of boundary sections of convex solutions of the oblique boundary value problem for
     Monge-Amp\`ere equations
     \[	\det  D^2u  =f(x)      \text{ in } \Om  , \quad D_{\beta}u  = \phi(x)                 \text{ on } \pom.\]
       In the two-dimensional case, we prove the global $C^{2,\alpha}$   estimate
     for the solution.  When  the dimension $n \geq 3$, we show that this estimate still holds if the  solution is bounded from above by a quadratic function in the tangent direction. We also obtain an existence result for the  convex solution of   Monge-Amp\`ere equations with Robin oblique
      boundary  conditions. 
\end{abstract}

 \vskip 15pt
\noindent{\bf Key Words:} oblique boundary value problem, Monge-Amp\`ere equation, $C^{2,\alpha}$ regularity
 \vskip 15pt
\noindent {\bf AMS Mathematics Subject Classification}:  35J60,  35J96, 35J25.

\vskip 15pt

\noindent {\bf  Running head}:  Regularity for the oblique derivative problem of   Monge-Amp\`ere equation

\vskip20pt

\baselineskip=15.8pt
\parskip=3pt

\newpage

\centerline {\bf Global $C^{2, \alpha}$ Regularity for the Oblique  Boundary Value Problem of  Monge-Amp\`ere Equations        }

 \vskip10pt

\centerline { Huaiyu Jian\ \  and \ \ Xushan Tu}

\tableofcontents

\baselineskip=15.8pt
\parskip=3.0pt

\newpage
\section{Introduction}\label{sec1}

In this paper, all solutions to the Monge-Amp\`ere equations are convex functions. We always assume that   $n\geq 2$, $\alpha \in (0,1)$ is a constant, $\Omega$ is a bounded convex domain in $\R^n$, and  $ 0<\lambda \leq f(x) \leq \Lambda$ on $\Omega$, and $\nu$ is the inward normal vector field  on $\partial \Omega$ if $\Omega \in C^1$.  Lions, Trudinger and Urbas \cite{[LTU]} studied the Neumann problem of the Monge-Amp\`ere equation
\[\det  D^2u  =f(x)      \text{ in } \Om  , \quad
D_{\nu}u  = \phi(x)                 \text{ on } \pom.\]
 Under the assumptions of $f$, $\phi $ and $\partial \Omega$ are sufficiently smooth, and $\Omega$ is uniformly convex, they proved the existence and uniqueness of smooth solutions in the space $C^{3, \alpha}(\bar \Omega)$ $(0<\alpha<1)$. Subsequently, Urbas \cite{[U87]} and Wang \cite{[W]} investigated the corresponding oblique derivative problem
 \begin{equation}\label{eq:oblique eq 1}
 	\det  D^2u  =f(x)      \text{ in } \Om  , \quad D_{\beta}u  = \phi(x)                 \text{ on } \pom.
 \end{equation} 
In two-dimensional case,  when $\Omega$ is $C^{2, 1}$ and uniformly convex $f\in C^{1, 1}(\bar \Omega)$,  $\phi \in C^{1, 1}(\partial \Omega)$, and $\beta \in C^{2, 1}(\partial \Omega; \R^n)$ is a small $C^1$ perturbation of the inner vector field $\nu$.
Urbas \cite{[U95]} proved that the solution belongs to $C^{2, \alpha}(\bar \Omega)$, and this was extended to higher dimensions in \cite{[U98]} under the further assumption that $\Omega$ is uniformly convex and $C^{3,1}$ smooth. The method of \cite{[LTU]} can also be applied to the oblique derivative problem for other fully nonlinear equations,  such as the oblique derivative problem for Hessian equation studied by Ma and Qiu \cite{[MQ]}, and for the augmented Monge-Amp\`ere equations studied by Jiang and Trudinger \cite{[JTX]}.

We study the pointwise Schauder regularity at the boundary of the Aleksandrov solution to \eqref{eq:oblique eq 1}, which means we investigate the $C^{2, \alpha} $ regularity of the solution when the positive coefficient $f$ is only in $ C^{\alpha}(\bar \Omega)$. The interior Schauder regularity of strictly convex solutions is obtained by Caffarelli  \cite{[C2]}, and a simplified proof can be found in Jian and Wang \cite{[JW]}. The boundary Schauder regularity for the Dirichlet problem is given by Trudinger and Wang in \cite{[TW]}, and a pointwise Schauder estimate is provided by Savin in \cite{[S]}. The global estimate for the second boundary value problem of the Monge-Amp\`ere equation in optimal transportation can be found in works by Delanoë \cite{[De]}, Urbas\cite{[U97]} and Caffarelli \cite{[C4]}. Recently, Chen, Liu, and Wang  \cite{[CLW]} made substantial improvements to the global  $C^{2, \alpha} $  regularity result for the natural boundary value problem in optimal transport in \cite{[C4]}.

When $n \geq 3$, the boundary Schauder regularity of \eqref{eq:oblique eq 1} is not naturally established, as shown in 
 \cite{[W]} or Example \ref{exam:pogorelov function} in this paper. Following the study of the Dirichlet problem in \cite{[S]}, we additionally assume that $u$ has quadratic growth at the given boundary point $x_0$. That is, there exists a constant $C_0>0$
and a subdifferential $p_{x_0} \in \partial u(x_0) $  such that 
\begin{equation}\label{eq:qg ch1}
	u(x)-u(x_{0})-p_{x_0} \cdot (x-x_{0}) \leq C_0|x-x_{0}|^{2} , \quad \forall x \in \partial \Omega.
\end{equation} 
 An example that satisfies the quadratic growth condition is when $\partial \Omega$ is $C^{1,1}$ and $x_0$ is the maximum value point of $u$.


Based on the convexity of the solution $u$, we interpret the equation $D_{\beta}u=\phi$ in \eqref{eq:oblique eq 1} in terms of the Dini derivative (see \eqref{eq:dini def}).

\begin{Theorem} \label{thm: c2alpha n=2}
	Let $n=2$ and $u \in C(\bar \Omega)$ be a solution of  \eqref{eq:oblique eq 1}. Suppose that $\pom  \in  C^{1,\alpha}$,    $f \in C^{\alpha}( \bar \Omega)$, $\phi \in C^{1,\alpha}(\bar{\Om})$,   $\beta \in C^{1,\alpha}(\partial \Omega; \R^n)$ is an oblique vector field\footnote{$\beta$ is oblique (point inward) at $x_0$ if $\left\{x_0+t\beta(x_0) |\, t> 0\right\} \cap \Omega \neq \emptyset$.}. Then, $u\in C^{2,{\alpha} }\left(\bar{\Omega}\right)$.
\end{Theorem}

\begin{Theorem} \label{thm: c2alpha n=3}
	Let $n\geq 3$ and $u \in C(\bar \Omega)$ be a solution of  \eqref{eq:oblique eq 1}. Suppose that $\pom  \in  C^{1,\alpha}$,    $f \in C^{\alpha}( \bar \Omega)$, $\phi \in C^{1,\alpha}(\bar{\Om})$,   $\beta \in C^{1,\alpha}(\partial \Omega; \R^n)$ is an oblique vector field.  If \eqref{eq:qg ch1} holds, then   $u\in C^{2,{\alpha} }(x_0)$.
\end{Theorem}

\begin{Remark}
	If we replace the assumption \eqref{eq:qg ch1} in Theorem \ref{thm: c2alpha n=3} with 
		\begin{equation}\label{eq: qg ch1 e}
		u(x)-u(x_{0})-p_{x_0} \cdot (x-x_{0}) \leq C_0|x-x_{0}|^{2} +\varepsilon, \quad \forall x \in \partial \Omega
	\end{equation} 
where  $\varepsilon>0$ is a sufficiently small constant depending on some universal constants, the conclusion still holds. In particular, the solution is actually a $C^{2,{\alpha} } $
 function around  $x_0$.
\end{Remark}

Obviously, Theorem \ref{thm: c2alpha n=2} can be seen as the extension of the Schauder regularity for Neumann problems of  Poisson equations in two dimensions. When $n \geq 3$, Theorem \ref{thm: c2alpha n=3} actually implies  that if $u$ is $C^{1,1}$ in all tangential directions, then $u\in C^{2,{\alpha} }\left(\bar{\Omega}\right)$. We will prove in Section \ref{sec:quasi sconx} that the quadratic growth condition \eqref{eq:qg ch1} can ensure the strict convexity, and the strict convexity of the solution, especially in the tangential direction, which is crucial for our boundary Schauder regularity.  As shown in Example \ref{exam:c11}, we can always construct non-strictly convex solution of the oblique derivative problem, which are $C^{1,1-\epsilon}$ along the tangent direction.

Our main tools are normalization and perturbation methods, with key points being the pre-compactness of the oblique derivative problem under normalization, the pre-compactness of the normalized solution, and the compactness of the corresponding oblique boundary values under convergence. 

For the normalized oblique derivative problem \eqref{eq:oblique eq 1}, our approach is to study the section {\sl    section $S_{t,\nabla u(x_0)}^u(x_0)$ of $u$ with height $h$, based point $x_0 \in \partial \Omega$  and a specific subgradient $p \in \partial u(x_0)$}, defined  by
\[S_{h,p}^{u}(x_{0}) :=\left\{x \in \bar{\Omega} |\ u(x)<u(x_{0})+p \cdot(x-x_{0})+h\right\}, \]
and its {\sl oblique boundary} $G_{h}^{u}(x_0): =  S_{h}^{u}(x_{0})  \cap \partial \Omega$. In Section \ref{sec:gsl}, we choose a special $p=\nabla u(x_0)$ and write $S_{t,\nabla u(x_0)}^u(x_0)$ as $S_{t}^u(x_0)$ or $S_{t}(x_0)$. We will prove that
$S_{h}^{u}(x_0)$ shrinks to $\left\{x_0\right\}$, and there exists a positive constant $c$ such that for every small $h$, $S_{h}^{u}(x_{0})$  satisfies
\[	c h^{\frac{n}{2}} \leq	\left|S_{h}^{u}(x_{0})\right| \leq c^{-1} h^{\frac{n}{2}}\]
and 
\[	c\P^{x_0} S_{h}^{u}\left(x_{0}\right)+(1-c)\P^{x_0}x_0 \subset \P^{x_0} G_{h}^{u}\left(x_{0}\right) \cap  \P^{x_0}\left(2x_0- G_{h}^{u}\left(x_{0}\right) \right),  \] 
where 
 $\P^{x_0}=\P^{x_0,\beta(x_0)} $ denotes the projection mapping along the direction $ \beta (x_0) $  to the tangent plane $\partial \Omega$ of  at $x_0$.
We refer to this as the good shape property of $S_h^{u}(x_0)$, which is invariant under convergence and any linear diagonal transformation  $\D$ that keeps  $\beta$ and the tangent plane invariant. As a result, the normalized oblique derivative problems are pre-compactness.

In the case $n=2$,  the good shape properties holds at any boundary point, the engulfing property of sections is naturally induced, and it can be shown that the solution is always $C^{1,\alpha}$ up to the boundary. However, when $n \geq 3$,  there are two obstacles, stemming from the lack of compactness of solutions and the corresponding oblique boundary values.

	To ensure the compactness of the normalized solution, we prove that the solution with zero oblique boundary values is {\sl uniformly strictly convex} near the given boundary point.   In Section \ref{sec:uscl}, we will use this fact and the good shape property to construct a universal strictly convex modulus for solutions of normalized oblique derivative problems,  which is equivalent to a pointwise uniform $C^{1,\alpha} $ estimate at the given boundary point $x_0$. 

In Section \ref{sec:viscosity subsolution},  we study the compactness of oblique boundary values. By combining the interior equation and the convexity of the domain, we investigate the upper semicontinuous (convex) viscosity subsolutions of \eqref{eq:oblique eq 1}. Several theorems on existence and compactness are presented here.

In Section \ref{sec:liouville and station}, we will study the blow-up limit and prove that the normalized oblique boundary $\tilde{G}_h$ tends to flat, so that the blow-up limit corresponds to  solutions to the Neumann problem for the Monge-Amp'ere equation in the half-space under some specific restrictions. 
\begin{Theorem} [{\cite[Theorem 1.1]{[JT]}}]\label{thm:liouville thm}
	Let $u \in C(\overline{\R_+^n})$ be a convex solution of
	\[ 	\det  D^2u  =1       \text{ in }\R_+^n , \quad
	D_nu = ax_1                   \text{ on } \partial\R_+^n. \]
	If $n=2$, or if $n\geq 3$ and $a=0$, or  $n\geq 3$ and $u$ satisfies $		\lim_{z\in \R^{n-2}, z \to \infty}  \frac{u(0,z,0)}{|z|^2 } <  \infty $, then $u$ is a quadratic polynomial.
\end{Theorem}
Then a standard perturbation methods as in \cite{[JW]} gives the $C^{2,\alpha}$ estimate.
In Section \ref{sec:example}, we will list some examples that illustrate the obstacles that may arise in oblique derivative problems.

\vspace{8pt}

\section{Preliminaries} \label{sec2}

In this paper, A point in $\R^{n}$ is written as \[x=(x_1,\cdots,x_{n-1}, x_n)=(x',x_n),\]
and if $n\geq 3$, it will also be written as $x=(x_1,x'',x_n)$.  Define  $\P x=x'$, {\sl the projection mapping along the $x_n$-axis   to the  hyperplane $\R^{n-1}:=\left\{x_n=0\right\}$}, and
set
\[\R_+^n= \left\{x=(x_1,\cdots,x_{n-1}, x_n)\in \R^n|\, x_n>0\right\}.\]
Denote by $\I$  [or $\I'$, $\I''$] the identity matrix of size $n$ [or $n-1$, $n-2$]
and $B_r(x)$  [or $B_r'(x')$, $B_r''(x'')]$  the ball of radius $r$  centered at $x$ [or $x'$, $x''$] in $\R^n$ [or $\R^{n-1}$,  $\R^{n-2}$].

The term ``universal constant" refers to a positive constant that depends only on  $diam(\Omega)$ and $ \left\|\partial \Omega\right\|_{C^{1,\alpha}}$, $\alpha$, $n$, $\lambda$, $\Lambda$,   $\left\| \phi\right\|_{ Lip}$, the oblique constant $\eta $ of $\beta$, $\left\| \beta\right\|_{ Lip}$,  the quadratic constant $C_0$ whenever they are involved,  but not on the specific functions $u$ and $\upsilon$.  We denote these universal constants by $c$ if small or $C$ if large.  Moreover, we denote $C_q$ as a positive quantity that depends only on the universal constants and the parameters $q$.  Given two non-negative quantities $ b_1$ and $b_2 $, we will use the notation 
\begin{equation}\label{eq:symbol}
	b_1 \lesssim b_2
\end{equation}
to indicate that $ b_1 \leq Cb_2 $ for a universal constant $C$.  We also denote 
\[b_1 \approx b_2 \] 
if  $ b_1 \lesssim b_2 $  and $b_2 \lesssim b_1$.  We will write  $\A \leq \B$ for matrices $\A$ and $\B$ when $\B-\A$ is positive semidefinite. Similarly, we use $\A \lesssim \B$ for two positive semidefinite matrices $\A$ and $\B$  to indicate that  $\A \leq C\B $, and use $\A \approx \B$ if  $\A \lesssim \B $  and $\B \lesssim \A$. The symbol $ \gtrsim$ is understood in a similar way. If the constant $C$ is replaced by $C_q$, then we use the symbols $\lesssim_q$, $\gtrsim_q$ and $\approx_q$ instead.

 In this paper, we need some preliminaries on convex solutions to Monge-Amp\`ere equations. Readers can refer to  related chapters in   \cite{[C1],[C2],[C3],[F],[G], [TW1]}. 

	For $E, F \subset \R^n$ and $\kappa \in \R$ we define 
\[E+F:=\left\{ x+y|\, x\in E, y \in F\right\} , \quad \kappa E= \left\{\kappa x |\, x \in E \right\} ,\] 
and we write $-E $ for $(-1)E$, $E-F$ for $E+(-F)$ and $E+x $ for $E+\left\{ x\right\}$, where $x \in \R^n$.
The Lebesgue measure of a measurable set $E\subset \R^n $ is denoted by $|E|$.

For  $\kappa>0$, a bounded convex set $E\subset \R^{n}$ is said to be $\kappa$-balanced about a point  $x$ if
\[t(x-E)\subset E-x  \text{ for all } t \in [0,  \kappa].  \] 
If $\kappa $ can be taken to be universal,  then we say that $E$ is balanced about $x$.

	\begin{Lemma}[John's Lemma]\label{lem2.9 John's Lemma}
	Suppose  $S \subset \R^n$ is a bounded convex set and $\mathring{S} \neq \emptyset$. Then there is an  ellipsoid  $E$ (called the John ellipsoid, the unique ellipsoid of the maximal volume contained in $S$) such that  
	\[ E  \subset  S  \subset C \left(E-x_E\right)+x_E,\]
	where $ x_E$ is the mass center of $E$, and $C$ depends only on $n$.
	\end{Lemma}

	\begin{Lemma}[{\cite[Lemma 2.4]{[JT]}}]\label{lem2.5 balance lemma}
	Given   convex set $E \subset \R^n$ and  point $x\in \R^n$.  Suppose that the line $x+ te_n : t \in \R$ intersects $E$ at the points $y$ and $z$.  Then, we have 
	\[|y_n-z_n| \cdot	\operatorname{Vol}_{n-1}\P E  \lesssim |E|.\]
	Moreover, if $\P E$ is $ \kappa$-balanced about $\P z$, then we have the inverse inequality
	\[|y_n-z_n| \cdot	\operatorname{Vol}_{n-1}\P E  \gtrsim \kappa |E|.\]
\end{Lemma}

The subdifferential of a convex function $u$ at $x_0 \in \bar \Omega$ is  
\[
\partial u(x_0)=\left\{p \in \mathbb{R}^{n} |\, u(x) \geq u(x_0)+p \cdot(x-x_0) \text{ for all } x \in U\right\}.
\]  
Each element $p \in \partial u(x_0)$ is called a subgradient of $u$ at $x_0$, and the function $u(x_0)+p \cdot(x-x_0) $ is a supporting hyperplane function of $u$ at $x_0$.  
We denote $\partial u(E):=\bigcup_{x \in E} \partial u(x)$ for  $E \subset \subset \Omega$, which is measurable if $E$ is open or closed, and hence is measurable if $E$ is Borel. 
The Monge-Amp\`ere measure $\M u$ is defined by 
\[\M u(E)=|\partial u(E)| \text{ for each Borel set } E \subset\subset U,\]	
we say $u$ is a (generalized/Aleksandrov) solution to $\det D^2 u = f $ if $\M u= fdx$.  We will use the following comparison principle for generalized solutions to Monge-Amp\`ere equations, which can be found in, e.g.,  \cite{[F]} or \cite{[G]}. 

\begin{Lemma}\label{lem:comp pD}
	Suppose $\Omega$ is a bounded convex set, and $u \in C\left(\overline{\Omega}\right)$ and $v \in C\left(\overline{\Omega}\right)$ are convex functions. If $	\det D^2 u \leq \det D^2 v \text{ in } \Omega, \quad u \geq v\text{ on } \partial \Omega$,	then $u \geq v$ in $\Omega$.
\end{Lemma}

	As a corollary,
	\begin{Lemma}\label{lem:comp pN}
		Suppose $\Omega$ is a bounded convex set,  $G_1$ and $ G_2$ are closed sets on $\partial \Omega$ with no interior intersection and satisfy	$G_1 \cup G_2  =\partial \Omega  $,  and $\beta$ is oblique on $ \partial \Omega \setminus G_1 $.
		Let $u \in C\left(\overline{\Omega}\right)$ and $v \in C\left(\overline{\Omega}\right)$  be two  convex functions  that satisfy  
		\begin{equation} \label{eq3.8 comparison equation 1}
			\begin{cases}
				\det  D^2v \leq   \det  D^2u      & \text{ in } \Omega , \\
				u <  v                   &\text{ on }    G_1 , \\
				D_{\beta}v <  D_{\beta}u                     & \text{ on } G_2.
			\end{cases}
		\end{equation}
		Then, $u <  v  \text{ in } \bar{\Omega}$.
	\end{Lemma}

%
%
%
%
%
%
%
%
%

	The following lemmas are well known for convex functions and are corollaries of the Aleksandrov-Bakelman-Pucci maximum principle, see \cite[Theorem 1.4.2]{[G]}.
\begin{Lemma}[Aleksandrov’s Maximum Principle]\label{lem:bdy c1/n}
	Suppose $u \in C(\overline{\Omega})$ is convex, $u=0$ on $\partial \Omega$. There exists $C>0$, depending only on the dimension $n$, such that
	\[
	u(x) \geq-C\left([\operatorname{diam}(\Omega)]^{n-1}\operatorname{dist}\left(x, \partial \Omega\right) \M u(\Omega)\right)^{\frac{1}{n}}.
	\]
\end{Lemma}	

By comparison with an appropriate quadratic function, we can obtain from Lemma \ref{lem:comp pD} that 
	\begin{Lemma}\label{lem:sec vol lbd} 
	Let $u$ be a convex function on $\Omega$ satisfying $\det  D^2u > \lambda $ in $ \Omega$, then $|\Omega| \leq C(\lambda)\left\| u\right\|_{L^{\infty}(\Omega)}^{\frac{n}{2}} $. 
	In particular, we always have $\left|S_{h,p}^u(x_0)\right|\lesssim h^{\frac{n}{2}}$.
\end{Lemma}


\section{A Qualitative Strict Convexity Lemma}\label{sec:quasi sconx}

In this section, we introduce the qualitative strict convexity lemma for solutions of  Monge-Amp\`ere equations. We first review some lemmas about the strict convexity of convex functions, which can be found in  \cite{[C1],[C3]}.
\begin{Lemma} 
	Assuming $n=2 $,   $\Omega \subset \R^2$ is a  convex domain,  $u\in C( {\Omega})$ is convex, and
	\[ \lambda \leq \det  D^2u \leq  \Lambda \text{ in }  \Omega.\]
	Then $u$ is strictly convex and $C^1$ in $\Omega$.
\end{Lemma}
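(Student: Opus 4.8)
The plan is to prove Lemma 2.15 (the statement that in dimension $n=2$ a convex function $u$ on a convex domain $\Omega\subset\R^2$ with $\lambda\le\det D^2u\le\Lambda$ is strictly convex and $C^1$ in the interior). I would proceed by contradiction in two stages: first rule out that $u$ agrees with a support plane $\ell$ along a segment, then use strict convexity together with the pinched Monge-Amp\`ere bounds to upgrade to $C^1$.

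\textbf{Step 1: strict convexity.} Suppose not; then there is a support plane $\ell$ of $u$ such that the contact set $\Sigma:=\{x\in\Omega: u(x)=\ell(x)\}$ is a convex set containing more than one point. After subtracting $\ell$ and an affine change of coordinates I may assume $\ell\equiv 0$, $u\ge 0$, and that a horizontal segment $I=[-\delta,\delta]\times\{0\}$ lies in $\Sigma$. The key dichotomy (Caffarelli's extremal-point argument) is: either $\Sigma$ has an interior extremal point in $\Omega$, or $\Sigma$ reaches the boundary $\partial\Omega$ on both ends. In the first situation I localize near an extremal point $x_0\in\Sigma\cap\Omega$ of $\Sigma$: for small $h>0$ the section $S_h^u(x_0)=\{u<h\}$ near $x_0$ is, by convexity, a thin region whose projection onto the line through $\Sigma$ has length bounded below by a constant while its width transverse to $\Sigma$ is $o(1)$ as one approaches the extremal point; more precisely, I slide the plane up by $h$ and examine $\{u\le h\}$ which is a convex set whose shape degenerates. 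I then invoke the lower volume bound for Monge-Amp\`ere solutions (the section $S_h$ satisfies $\operatorname{Vol}(S_h)\ge c\, h^{n/2}$ from $\det D^2u\le\Lambda$ via Lemma \ref{volune bound above uni}-type bounds) against the upper bound $\operatorname{Vol}(S_h)\le C h^{n/2}$, but the real contradiction comes from the \emph{shape}: because $\Sigma$ is a segment on which $u=0$, the section $S_h$ contains a fixed-length segment, so by Lemma \ref{l2.2} (the slab-volume estimate) its transverse width must be at least $c h^{n/2}/(\text{length})\sim c h$, yet near an extremal point of $\Sigma$ the section cannot contain a ball of radius comparable to $h$ centered away from $\Sigma$ — one derives that $u$ restricted to a transverse line is bounded by a function that is $o(1)$ faster than linear, forcing, via Aleksandrov's maximum principle (Lemma \ref{l2.9}), $\det D^2u$ to be unbounded or zero on a set of positive measure, contradicting $\lambda\le\det D^2u\le\Lambda$. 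The second situation (the segment $\Sigma$ spans $\Omega$ boundary-to-boundary) is where dimension $n=2$ is essential: a full chord on which $u$ is affine splits $\Omega$, and choosing the section $S_h$ one gets a region that is a neighborhood of a chord; applying the Monge-Amp\`ere measure estimate on this degenerate region (it has diameter $\gtrsim 1$ in one direction and width $\to 0$) again violates either the upper bound $\operatorname{Vol}(S_h)\le Ch$ combined with the Aleksandrov estimate, or directly contradicts $\det D^2u\ge\lambda$ since $u$ is affine on a segment of the region. This is the step I expect to be the main obstacle: making the "degenerate section" argument rigorous requires the careful two-dimensional geometry of how a convex level set can contain a long segment, and is exactly Caffarelli's theorem — I would cite \cite{[C1], [C3]} (as the statement itself already does) and reproduce the contradiction via Lemmas \ref{l2.2}, \ref{volune bound above uni}, \ref{l2.9}.

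\textbf{Step 2: $C^1$ regularity.} Once $u$ is known to be strictly convex in $\Omega$, the sub-differential $\partial u$ is single-valued at every interior point, which is precisely $C^1$ differentiability for a convex function. Indeed, if $u$ were not differentiable at some $x_0\in\Omega$, then $\partial u(x_0)$ would contain a segment $[p,q]$ with $p\ne q$; the convex dual statement is that the Legendre transform $u^*$ is not strictly convex, i.e.\ $u^*$ is affine on the segment $[p,q]$. But the Monge-Amp\`ere measure of $u$ equals (up to the change of variables) the Lebesgue measure of the image $\partial u(\,\cdot\,)$, and the two-sided bound $\lambda\le\det D^2u\le\Lambda$ transfers (by the Monge-Amp\`ere measure inversion, $Mu^* = (Mu)^{-1}$ in the appropriate sense, valid for strictly convex $u$) to $\lambda^{-1}\ge\det D^2u^*\ge\Lambda^{-1}$ on the image domain $\partial u(\Omega)$, with $u^*$ convex there. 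Applying Step 1 to $u^*$ on its (convex) domain shows $u^*$ is strictly convex, contradicting that it is affine on $[p,q]$. Hence $\partial u(x_0)$ is a single point for every $x_0\in\Omega$; together with continuity of the subdifferential map on the set where it is single-valued, this gives $u\in C^1(\Omega)$. I would present Step 2 compactly, noting that strict convexity of $u$ and of its Legendre transform are dual and both follow from the pinched determinant bound once Step 1 is in hand, and remark that this duality argument is standard (cf.\ \cite{[C3], [F], [G]}).
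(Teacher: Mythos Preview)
The paper does not give its own proof of this lemma; it is stated as one of three preliminary results ``found also in \cite{[C1],[C3]}'' and is used as a black box. Your sketch follows exactly the standard Caffarelli argument from those references (the extremal-point dichotomy for the contact set to obtain strict convexity, then Legendre duality to upgrade to $C^1$), so there is nothing to compare.

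One remark on your Step 1: the ``second situation'' (the contact segment reaches $\partial\Omega$ on both ends) is the heart of the two-dimensional case, and your explanation there is somewhat loose --- being affine on a segment does not by itself contradict $\det D^2u\ge\lambda$, and the phrase ``violates either the upper bound \dots\ or directly contradicts $\det D^2u\ge\lambda$'' is not a proof. The clean way is to restrict to a compact subdomain $\Omega'\subset\subset\Omega$ so that $u\in C(\overline{\Omega'})$ and the segment still has fixed positive length $2L$ inside $\Omega'$; then for small $h$ the section $S_h=\{u<h\}\cap\Omega'$ satisfies $\operatorname{Vol}(S_h)\le Ch$ (Lemma \ref{volune bound above uni}) while containing a segment of length $2L$, hence has transverse width $\le Ch/L$. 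Normalizing $S_h$ by John's lemma, the Aleksandrov estimate (Lemma \ref{l2.9}) applied to $u-h$ forces the minimum point to stay a fixed distance from $\partial S_h$, which is incompatible with the section collapsing onto the segment as $h\to 0$. Since you already plan to cite \cite{[C1],[C3]} for this step, as the paper does, the vagueness is not fatal --- but if you intend to write out the argument rather than cite it, that gap should be closed.
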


\begin{Lemma} \label{lem2.15 strictly convex  pogorolev}
	Suppose that $n \geq 3$, $\Omega \subset \R^n$ is a  convex domain, $u\in C(\bar {\Omega})$ is convex function, and
	\[ \lambda \leq \det  D^2u \leq  \Lambda \text{ in }  \Omega,\quad u=\varphi \text{ on } \partial \Omega. \]
	If $\varphi \in C^{1,\alpha}(\partial \Omega)$ for $\alpha > 1-\frac{2}{n}$, then $u$ is strictly convex and $C^1$ in $\Omega$.
\end{Lemma}

\begin{Lemma} \label{lem2.16 strict classical line lemma}
	Suppose that $u\in C(\Omega )$  is convex and satisfies
	$0 < \lambda \leq \operatorname{det}  D^2u \leq \Lambda $.
	For $x\in \Omega$ and $p\in \partial u(x)$, we define
	$  \Sigma:=\left\{y|\,u(y)=u(x)+p\cdot (y-x)\right\}$.
	Then, either $\Sigma=\left\{x\right\}$ or $\Sigma $ has no extreme point\footnote{A point is an extreme point of a  convex set $E$ if it does not lie in any open line segment joining two points of $E$.} inside $\Omega$.
\end{Lemma}

As is well-known, the $C^{1, \alpha}$ and  $C^{2, \alpha}$ interior estimates of strictly convex solutions to the Monge-Amp\`ere equation were established by Caffarelli \cite{[C1],[C2],[C3]}. Also see the  related chapters in  \cite{[F], [G]}.

 In this section,   we always assume that $0 \in \partial \Omega$ and 
\begin{equation} \label{eq2.6 boundary prigin definition}
	\Omega \cap  B_{c}(0):=\left\{(x', x_n) |\, x_n > g(x'),\ x'\in B'_c(0) \right\} ,
\end{equation}
such that the part of the boundary $\partial \Omega $ near $0$  is described by a locally Lipschitz function $g$ (or $ C_{loc} ^ {1, \alpha} $ if $\partial \Omega \in C^ {1, \alpha}$) as
\begin{equation}\label{eq2.7 boundary definition}
	G:=\left\{(x',x_n) |\, x_n = g(x'),\ x'\in B'_c(0)\right\}.
\end{equation}
Since $\Omega $ is convex,  $g$ is convex  and satisfies
\begin{equation} \label{eq2.8 boundary g definition}
	0 \leq  g(x') \leq C|x'|,\quad \forall x'\in B'_c(0).
\end{equation}
We use $\G x' = \G (x',0)$ to denote the point $(x',g(x'))$, where
We will use the following notation,
\begin{equation}\label{eq2.9 boundary map  definition}
	\G x= 	\begin{cases}
		(x',x_n)   &   \text{ if } x_n \geq g(x') ,  \\
		(x',g(x'))         &  \text{ if } x_n \leq g(x').
	\end{cases}
\end{equation} 
\begin{Lemma} \label{lem:quasi sconx}
	 Let $u\in C(\bar{\Omega})$ be a convex function,  $u\geq 0$  and satisfies
	\[	0 < \lambda \leq \det   D^2u \leq \Lambda  \text{ in }  \Omega.\]
	Then there exists a module \footnote{In this paper, a module $\sigma_i(t)$ represents a non-negative, strictly increasing, and continuous function that depends on the parameter $t$ and satisfies $\sigma_i(0)=0$.} $\sigma_{1} $ such that if  $\left\|u\right\|_{L^{\infty}\left(\Omega \cap B_c\right)} \leq K $  and
	\begin{equation} \label{eq3.1 stricty convex balance n-2 condition}
		\frac{\operatorname{Vol}_{n-1}\left(\P \check{S}_h(0) \cap (-\P \check{S}_h(0)) \cap B_1'(0) \right)}{h^{\frac{n-2}{2}}}  \geq K^{-1}, \quad \forall h\geq \sigma_{1}  ,
	\end{equation}
	where $\check{S}_h(0):=\left\{ x\in \Omega| u(x) \leq h\right\}$, then $u(0) \geq \sigma_{1} (K^{-1})$.  	
\end{Lemma}
\begin{proof}
	Assume that  $u(0) \leq \sigma_{1}$ (to be determined) is very small. For any  $h \geq C \sigma_{1}$,
	let $a_h =\sup \left\{ t|\, te_n \in \check{S}_h(0)\right\}$. Note that $u(he_n) \leq hu(e_n)+(1-h)u(0) \leq Kh+\sigma_{1} $, so we have   $a_h \geq ch$, where $c$ depends on $K^{-1}$. On the other hand, by Lemma \ref{lem2.5 balance lemma} and  Lemma \ref{lem:sec vol lbd}, we have
	\[	a_h\operatorname{Vol}_{n-1}\left(\P \check{S}_h(0) \cap (-\P \check{S}_h(0))  \right) \lesssim a_h\operatorname{Vol}_{n-1}(\P \check{S}_h(0)) \lesssim |\check{S}_h(0)| \lesssim h^{\frac{n}{2}},\]
	which together with \eqref{eq3.1 stricty convex balance n-2 condition} implies  that $   a_h \leq Ch$. Therefore, $ ch \leq a_h \leq Ch$. For each    $t\geq C\sigma_{1}$, we now consider the non-increasing function
	\begin{equation} \label{eq3.2 equation qsc1l e1}
		b(t):=\frac{u(te_n)}{t}\approx 1.
	\end{equation}
	We claim that {\sl  there exists  $\delta\in (0, 1) $ depending on $K$ such that
		\begin{equation} \label{eq:proof qlst 1}
			b(t/2) < (1-\delta)b(t), \quad \forall t \geq \delta^{-1} \sigma_{1}.
	\end{equation}  }

	In contrast to \eqref{eq:proof qlst 1}, suppose there exists a positive constant $t_0\geq C\delta^{-1} \sigma_{1}$  such that $b({t_0}/2) \geq (1-\delta)b({t_0})$. For simplicity, we can assume that
	\[b({t_0})=1 \text{ and } b({t_0}/2) \geq 1-\delta.\]
	Let
	\[ E:=\left\{(x', x_n)\in \check{S}_{\delta {t_0}}(0)|\, x'\in   \P \check{S}_{\delta {t_0}}(0)\cap (- \P \check{S}_{\delta {t_0}}(0))\right\}.\]
	 Then $ \P E$ is balanced about $0$, by Lemma \ref{lem2.5 balance lemma}, we have 
	\begin{equation}\label{eq3.4 equation qsc1l e3}
		0\leq \sup \left\{ x_n| x=(x, x_n) \in E\right\} <\frac{{t_0}}{4}.
	\end{equation}
	 Now, let $v(x)=u(x)-x_n$ and take the points $Y_1=t_0e_n $ and $ Y_2=\frac{{t_0}e_n}{2}$. We have,
	 \[ -\delta t_0 \leq v(Y_1) \leq 0,\quad -\delta t_0 \leq v(Y_2) \leq 0,\quad  \text{  and } v\leq u \leq \delta t_0 \text{ in } E.\]
Consider the cone domains
	\[  \Gamma_1^+:=\left\{ Y_1+s\left(z-Y_1\right)| s \in \left(0,1\right) ,  z \in  \P E\right\}, \text{ and }  \Gamma_2^-:=\left\{ Y_2-s(z-Y_2)| s \in \left(1/2,2\right) ,  z \in \P E\right\}. \]
	By convexity, we have
	\[  -C\delta t_0 \leq  v \leq C\delta t_0 \text{ in } \Gamma_1^+ \cap  \Gamma_2^-.\] 
	Then Lemma \ref{lem:sec vol lbd} implies $\left|\Gamma_1^+ \cap  \Gamma_2^-\right|\lesssim(\delta {t_0})^{\frac{n}{2}}$, which contradicts 
	\[ \left|\Gamma_1^+ \cap  \Gamma_2^-\right|\geq c{t_0}\operatorname{Vol}_{n-1}(\P E) \geq  c K^{-1} \delta^{-1} (\delta {t_0})^{\frac{n}{2}}  \]
 provided $\delta$  is small. Thus, we have proved \eqref{eq:proof qlst 1}.
	By iteration, \eqref{eq:proof qlst 1} implies
	\[	b(t) \lesssim t^{|\log_2(1-\delta)|} \text{ for } t \gtrsim  \delta^{-1} \sigma_{1},\]
	which contradict \eqref{eq3.2 equation qsc1l e1} when $\sigma_{1}$ is small.  

\end{proof}

\begin{Lemma} \label{lem:scon 2dires}
	%
	 Let $u\in C(\bar{\Omega})$ be a convex function,  $u\geq 0$,  and $	0 < \lambda \leq \det   D^2u \leq \Lambda $ in $\Omega$.
	 Assume that $u \lesssim 1$ in $\Omega$ and  satisfies
	\begin{equation} \label{eq:proof qlst 2}
		u\left(x\right)\lesssim |x''|^2   \text{ on }  G \cap \left\{x_1=0\right\},
	\end{equation}
	then there exists a modulus  $\sigma_{2} $ such that $	u(x) \geq \sigma_{2}(|x_1|+|x_n|)$ in $\Omega \cap B_{c}(0)$. 
\end{Lemma}

\begin{proof}
	Fix  $\rho>0$ small and take a point  $y = \left( y_1,y'' ,y_n\right)\in \Omega $ satisfying $|y_1|+y_n=\rho$. Our goal is to get a lower bound $u(y)$ in terms of $\rho$. We can simplify the problem by using the reflection transformation with respect to the $e_1$ axis and the sliding transformation $\A x:=\left(x_1,x''-\frac{(x_1+x_n)}{\rho}y'',x_n\right)$. Therefore, without loss of generality, we can assume that $y=y_1e_1+y_ne_n$.   Then, we apply a rotational transformation $\B$  on the plane spanned by $e_1$ and $e_n$, which transforms the vectors $y_1e_1+y_ne_n$ and $y_n e_1-y_1e_n$ into $\rho e_1$  and $\rho e_n$ respectively. After that, we consider a translation transformation $\T$ that moves the point $\frac{1}{2}\rho e_1$ to the origin. 	Let $v(x) =u\left(\B^{-1}  \T^{-1} x \right)$ and
	\[\Omega_{v}:=\T \B \left(\left\{ t y + (1-t)z|\, z \in   \Omega \cap \left\{x_1=0\right\}, t\in (0, 1)\right\}\right), \]
	then
	\[0\leq v \lesssim 1 \text{ and }  \det D^2v \approx_{\rho}  1 \text{ in } \Omega_v. \]
	By locally writing
	\[G_{v}:=\T \B\left(\left\{ t y + (1-t)z|\, z \in G \cap \left\{x_1=0\right\}, t\in (0, 1)\right\}\right)=\left\{(x',x_n) |\, x_n = g_{v}(x')\right\} , \]
	 we can see that  $g_{v}$ is convex in $ B_{\rho^4}'(0)$ and satisfies $ 0\leq  g_{v}(x') \lesssim_{\rho} |x'|$. Combining convexity and the assumption \eqref{eq:proof qlst 2}, we have
	\[	v\left(x\right)\lesssim \rho^{-1} |x''|^2 +u(y)  \text{ on }  G_{v} \cap B_{\rho^4}^+(0),\]
	 which implies that $\check{S}_h: =\left\{ x\in  \Omega_{v}|\, v(x)\leq h\right\}   $ satisfies
		\[\frac{\operatorname{Vol}_{n-1}\left(\P^{\nu} \check{S}_h \cap (-\P^{\nu} \check{S}_h) \cap E \right)}{h^{\frac{n-2}{2}}}  \gtrsim_{\rho} 1\text{ for } h\geq 2u(y).\]
 	 Then, a modified version of Lemma \ref{lem:quasi sconx} gives a lower bound for $v(0)$, which in turn gives a lower bound for $u(y)$.

\end{proof}

\vspace{8pt}

\section{Global Lipschitz Regularity of solutions}\label{sec:Lipschitz}

If $\partial \Omega \in C^1$,  $\beta$ is oblique (point inward) on $\partial \Omega$ means that $\beta \cdot \nu \geq \eta>0$ for some $\eta$, where $\nu$ is the inward normal vector field.
 If $\Omega$ is only a bounded convex domain, we say that $\beta$ is oblique (point inward) at $x_0$ provided $\left\{x_0+t\beta(x_0) |\, t> 0\right\} \cap \Omega \neq \emptyset$. Since $\beta$ is continuous, by compactness we can assume the existence of constant $\eta$, called the oblique constant of $\beta$, depending on $\left\|\beta\right\|_{C^0}, \left\|\partial \Omega\right\|_{Lip}$,  such that 
\[\eta\leq \left\|\beta\right\| \leq \eta^{-1}, \quad \text{ and }\left\{ x_0+ t\left(\beta(x_0) +B_{\eta}(0)\right)  |\, t \in \left(0,\eta\right)\right\} \subset \Omega.\]
Combining the convexity of  $\Omega$ and the oblique property  $\beta$, the set 
\[\left\{y+t\beta (y)|\, y \in \partial \Omega , t > 0\right\}\] covers a neighborhood of  $\partial \Omega$. 

 Let $u $ be a solution of \eqref{eq:oblique eq 1}. Locally, after translation and rotation, we assume that the boundary point $x_0=0$, and 
 \[\Omega \subset \left\{ x_n \geq 0 \right\} \text{ satisfies \eqref{eq2.6 boundary prigin definition}-\eqref{eq2.8 boundary g definition}.}\]
 Therefore, we always consider $\phi $ and $\beta $ as functions depending only on the variable $x'$, and use $\phi(x')=\phi(\G x')$ and $\beta(x')=\beta(\G x')$ to denote them, respectively.

The Dini derivative of $u$ along any direction $\gamma$ at $x$, where $\gamma$ is oblique if $x \in \partial \Omega$, is denoted by
\begin{equation}\label{eq:dini def}
	D_{\gamma} u(x):=D_{\gamma}^+	u(x) =\lim_{t\to 0^+} \sup_{p \in \partial u\left(x+t\gamma (x)\right)} p \cdot \gamma \in [-\infty, +\infty).
\end{equation}
If $\gamma$ is continuous, then $	D_{\gamma} u$ is an upper semicontinuous function in $\Omega$. 
If $u$ is Lipschitz near $x_0$, which is always true for $x_0 \in \Omega$,  then we have 
\begin{equation} \label{eq:Dini supgradient}
	D_{\gamma}	u(x) = \sup_{p \in \partial u(x)} p \cdot \gamma=\lim_{t\to 0^+}D_{\gamma}	u(x+t\gamma (x)).
\end{equation}

 We  assume  $\beta(0)=e_n$ by considering the function   $u\left(\B y\right)$. Here, the sliding transformation $\B  x=\beta_nx+\sum_{i=1}^{n-1}\beta_i x_n e_i $ is defined by the expression $\beta (0)=\left(\beta', \beta_n\right)$. By proving that $u \in Lip \left(\bar \Omega\right)$, we locally replace the oblique equation $D_{\beta}u =\phi$ with the Dini derivative $D_{n}u$. We let $\gamma =e_n$ and choose  $\nabla u(x)$ such that \eqref{eq:Dini supgradient} holds, where such a definition is not unique and depends on the oblique vector $\gamma$ near $x_0$.   Additionally, we use  $\nabla u(x)$ to denote any element in   $	\partial u(x) $  for any $x \in \Omega $. The choice of $\nabla u(x)$ does not affect the proof. Finally, by subtracting the support function $u(0)+\nabla u(0)\cdot x$, we can assume that
 \begin{equation}\label{eq3.14 local problem assump 2}
 	u(0)=0,\quad  \nabla u(0)=0,\quad u\geq 0 \text{ in } \Omega,\quad  \beta(0)=e_n,\quad \phi(0)=0.
 \end{equation}
 For simplicity, we will always write    $ S_{h,\nabla u(0)}^{u}(0)$ and $ G_{h,\nabla u(0)}^{u}(0)$ as $S_{h} $ and  $ G_{h}$, respectively.

To provide a Lipschitz bound, we now demonstrate how to control the gradient in the remaining directions from the oblique direction.
\begin{Lemma}\label{lem:lip on modules}
	 For any convex domain  $\Omega$  satisfying \eqref{eq2.6 boundary prigin definition}-\eqref{eq2.8 boundary g definition}, assume that $u\in C\left(\overline{\Omega  \cap  B_c^+(0)}\right)$ is convex, for any $ x\in \Omega  \cap \bar B_{c}^+(0)$ and  $p \in \partial u(x)$, we have 
\[ |p'|\lesssim  \frac{2\left\|u\right\|_{L^{\infty}\left(\partial \Omega \cap B_{2|x|}(0)   \right)}+|p_n|\left(x_n+|x'|\right)}{|x'|}\]
and 
	 \[ |p'| \lesssim \inf_{r \geq |x'|} \left(\frac{2\left\|u\right\|_{L^{\infty}\left(\Omega \cap B_{2r}(0)   \right)}}{r}-p_n\right).\] 
%
\end{Lemma}
\begin{proof}
	We may assume that  $p'\neq 0$. By choosing the point  $y \in \partial \Omega$ such that $y'= x'+ \frac{p'}{|p'|}|x'|$, we have $y_n \lesssim |y'| \lesssim |x'|$ and
	\[|p'|\cdot |x'| +|p_n|\left(x_n+|x'|\right)\lesssim |p'|\cdot |x'|+ p_n(y_n-x_n) = p\cdot (y-x) \leq u(y)-u(x) \leq \left\|u\right\|_{L^{\infty}\left(\partial \Omega \cap  B_{2|x|}(0) \right)}.\] 
	 For any $r>|x'|$, by choosing a point $z \in \Omega$ such that $|p'| \left(z-x\right)=r(cp',|p'|) $,  we have  
	 \[r(p_n+c|p'|)=p\cdot (z-x)  \leq u(z)-u(x) \leq \left\|u\right\|_{L^{\infty}\left(\Omega \cap B_{2|x|}(0)   \right)}.\]  
\end{proof}
 
\begin{Lemma}\label{lem:lipschitz bound dC0}
	 Let $u $ be a solution to \eqref{eq:oblique eq 1}. Suppose $x_0 \in \partial \Omega$, then there exists $p \in \partial u(x_0) $ satisfying \eqref{eq:Dini supgradient} for $\gamma =\beta$, denoted by $ \nabla u(x_0)$, such that
	 \[|\nabla u(x_0)| \lesssim  \omega_u(\Omega)  + \left\|\min\left\{\phi,0\right\}\right\|_{L^{\infty}(\partial \Omega)},\]
	 where $\omega_u(\Omega)$ denotes the oscillation of $u$ on $\Omega$. Therefore, 
	  \[\left\|u \right\|_{Lip(\bar \Omega)} \lesssim \omega_u(\Omega)  + \left\|\min\left\{\phi,0\right\}\right\|_{L^{\infty}(\partial \Omega)}.\]
\end{Lemma}
\begin{proof}
	Applying Lemma \ref{lem:lip on modules},  we obtain that for each $x_0 \in \partial \Omega$ and $t>0$ small, the slope of any support function of $u$ at point $x_t:=  x_0+t\beta$ is bounded by $C\left( \omega_u(\Omega)  +\left\|\min\left\{\phi,0\right\}\right\|_{L^{\infty}(\partial \Omega)}\right)$, and as $t\to 0^+$, they subconverge to some support function $\ell_{0}$ of $u$ at $x_0$. It can be verified that the gradient $p:=\nabla \ell_{0}$ satisfies \eqref{eq:Dini supgradient}. 	
	
	The proof is completed by applying Lemma \ref{lem:lip on modules} and noting that $\left\{ y+t\beta(y ) |\, y \in \pom, t \geq 0\right\}$  covers a neighborhood of the $\partial \Omega$ and $u$ is convex.
\end{proof}

\begin{Theorem} \label{lem:lipschitz bound id}
	Let $u $ be a solution to \eqref{eq:oblique eq 1}.  Suppose $\phi \in C(\partial \Omega)$, then 
	\[\omega_u(\Omega) \lesssim diam(\Omega)\left\|\min\left\{\phi,0\right\}\right\|_{L^{\infty}(\partial \Omega)}.\]
	 Hence, $\left\|u \right\|_{Lip(\bar \Omega)}\lesssim\left(1+diam(\Omega)\right) \left\|\min\left\{\phi,0\right\}\right\|_{L^{\infty}(\partial \Omega)}$.
\end{Theorem}
\begin{proof} 
	We can assume that $u$ attains its maximum at the boundary point  $0$, $\beta(0)=e_n$ and $\Omega$  satisfies \eqref{eq2.6 boundary prigin definition}-\eqref{eq2.8 boundary g definition}. By writing $\nabla u(0) =\phi(0) e_n+be$ with unit vector $e \in \R^{n-1}$ and $b \geq 0$, we have
	\[ bt+  \phi(0) g(te_n) =\nabla u(0) \cdot \G(te) \leq u\left(\G(te)\right)-u(0) \leq 0, \quad \forall t>0 \text{ small.}\]
	Therefore, $b \lesssim |\min\left\{\phi(0),0\right\}| \leq  \left\|\min\left\{\phi,0\right\}\right\|_{L^{\infty}(\partial \Omega)}$ and then $|\nabla u(0)| \lesssim \left\|\min\left\{\phi,0\right\}\right\|_{L^{\infty}(\partial \Omega)}$, which implies that 
	\[\inf_{\Omega} u \geq  \inf_{\Omega}(u(0) + \nabla u(0)\cdot x )\geq \sup_{\Omega} u -Cdiam(\Omega)\left\|\min\left\{\phi,0\right\}\right\|_{L^{\infty}(\partial \Omega)}.\]
\end{proof}

%


\begin{Lemma} \label{lem:D_nu bound}
		Let $u $ be a solution to \eqref{eq:oblique eq 1}. Suppose $\phi\in C(\partial \Omega)$,
		  $0\in \partial \Omega$, $\beta(0)=e_n$, $\Omega$  satisfies \eqref{eq2.6 boundary prigin definition}-\eqref{eq2.8 boundary g definition}, and \eqref{eq3.14 local problem assump 2} holds. Then
	\begin{equation}\label{eq:nb value lip}
		|D_{n} u(\G x')- \phi(x')|  \lesssim \left\|u\right\|_{Lip(\bar \Omega)} |\beta(x') -e_n |\lesssim|x'| \text{ on } \partial \Omega \cap B_c(0).
	\end{equation}
 	 If we also assume that  $u(x) \lesssim |x|^{1+{\alpha_0}}$ on $ \partial \Omega$ for ${\alpha_0} \in [0,1]$, then we can improve \eqref{eq:nb value lip} to:
 	\begin{equation}\label{eq:nb value c11}
 		|D_{n} u(\G x')- \phi(x')| \lesssim   |x|^{1+{\alpha_0}}  \text{ on } \partial \Omega \cap B_c(0).
 	\end{equation}
\end{Lemma}	
\begin{proof}	
	Given a point $x=\G x' \in  \partial \Omega \cap B_c(0)$, based on \eqref{eq:Dini supgradient} and the definition of $\nabla u(x)$, we obtain that
	\begin{equation}
		\begin{split}
		D_{n} u(\G x')=	\sup_{p \in \partial u(x)} p \cdot e_n & \geq  \nabla 	u(x) \cdot e_n  \\ &
			\geq \nabla 	u(x)  \cdot \beta (x') -C|\beta(x') -e_n|\cdot\left|\nabla u(x)\right| \\ &
			\geq  \phi(x') -C\left\| u\right\|_{Lip(\bar \Omega)}|x'| .
		\end{split}
	\end{equation}

	Since $\beta \in Lip(\partial \bar \Omega)$ and $\beta(0)=e_n$,  for every fixed $x=\G x' \in  \partial \Omega \cap B_c(0)$ and $t>0$ small, we can write $ x+te_n= x_s+s\beta(x_s') $, where $x_s\in \partial \Omega$ and $s>0$ satisfying 
	\[ s \approx t, \quad  |\beta(x_s')-e_n| \leq \frac{1}{2}|x_s'| \text{ and } |x_s'|\leq 2|x'|.\]
	Then, by abuse the notation $\nabla$, we have
	\[ 	D_nu(x+te_n)  \leq D_{\beta(x_s')}u(x_s+s\beta(x_s') ) +C|\beta(x_s')-e_n| \cdot |\nabla u(x_s+s\beta(x_s'))|. \]
	
	Now consider the family of functions $\left\{\phi_{t}\right\}_{t\geq 0}$ defined by $\phi_{t}(y'):=	D_{\beta(y')} 	u\left(\G y'+t\beta (y')\right): B_c'(0) \to R$. $\phi_{t}$ are all upper semicontinuous and converge decreasingly to the continuous function $ \phi$ as $t \to 0^+$.  Therefore, this convergence is uniform and there exists a modulus $\sigma_{3}(t)$ (depending on $u$) with $\sigma_{3}(0)=0$, such that
	\begin{equation}\label{eq3.17 equation nbvl e1}
		\phi_{t}(y')\leq \phi(y') +\sigma_{3}(t).
	\end{equation}
	Hence,
	\[\begin{split}
		D_{n} u(\G x') & = \lim_{t\to0^+} D_nu(x+te_n)  \\
		& \leq \lim_{t\to0^+}[   D_{\beta(x_s')}u(x_s+s\beta(x_s') ) +C|\beta(x_s')-e_n| \cdot |\nabla u(x_s+s\beta(x_s'))| ]\\
		&\leq \lim_{t\to0^+}  \left(   \phi(x_s')+\sigma_{3}(Cs)+C|\beta(x_s')-e_n| \cdot |\nabla u(x_s+s\beta(x_s'))|   \right)\\
		& \leq \phi(x')+C|\beta(x')-e_n| \cdot \left\|u\right\|_{Lip(\bar \Omega)}\\
		& \leq \phi(x')+C|x'|\cdot \left\|u\right\|_{Lip(\bar \Omega)},
	\end{split}\]
	and \eqref{eq:nb value lip} follows.	
	
	Finally, if $u(x) \lesssim  |x|^{1+{\alpha_0}}$ on $ \partial \Omega$, note that $\beta$ is still oblique,  \eqref{eq:nb value lip} and a modified version of Lemma \ref{lem:lip on modules} imply that $|\nabla u(x)| \lesssim |x'|^{\alpha_0}$ on $ \partial \Omega$, and the same proof yields \eqref{eq:nb value c11}.
\end{proof}

%
%

\vspace{8pt}

\section{Good Shape Property}\label{sec:gsl}

  Let $u $ be a solution to \eqref{eq:oblique eq 1}, and assume that $\pom  \in  C^{1,\alpha}$,    $f \in C^{\alpha}( \bar \Omega)$ with $ 0<\lambda \leq f(x) \leq \Lambda$, $\phi \in C^{1,\alpha}(\bar{\Om})$,   $\beta \in C^{1,\alpha}(\partial \Omega; \R^n)$ is an oblique vector field. As in Section \ref{sec:Lipschitz} , we always assume that
   \[x_0=0, \text{ and }\Omega \subset \left\{ x_n \geq 0 \right\} \text{ satisfies \eqref{eq2.6 boundary prigin definition}-\eqref{eq2.8 boundary g definition} }\]
   and
   \[	u(0)=0,\quad  \nabla u(0)=0,\quad u\geq 0 \text{ in } \Omega,\quad  \beta(0)=e_n,\quad \phi(0)=0.\]
  Furthermore, when  $n \geq 3$, we assume that \eqref{eq:qg ch1}, i.e,  
  \begin{equation}\label{eq:qg chgsl}
  0 \leq u(x) \lesssim |x|^{2} , \quad \forall x \in \partial \Omega,
  \end{equation}
  and then $|\nabla u(x)| \lesssim |x'|$ on $ \partial \Omega$.  
  \begin{Lemma}\label{lem:gsl dn voln-1}
  	For all small $h>0$ and $x \in \R^n$, suppose the line $\ell(t):=x+ te_n$, $t \in \R$ intersects $S_h(0)$ at the points $y$ and $z$, then 
  	\[  |y_n-z_n| \cdot \operatorname{diam}S_h(x_0) \lesssim \frac{|S_h(0)|}{h^{\frac{n-2}{2}}} \lesssim h. \] 
  \end{Lemma}
\begin{proof}
	From Lemma \ref{lem:sec vol lbd} , we know that $|S_h(0)| \lesssim h^{\frac{n}{2}}$, so the lemma holds naturally for $n=2$. For $n \geq 3$, we notice that $ h^{1/2}B_{c}(0) \subset \P S_h(0)$ and so \[|y_n-z_n| \cdot \operatorname{diam}S_h(x_0) \cdot h^{\frac{n-2}{2}} \lesssim |S_h(0)| \lesssim h^{\frac{n}{2}}.\]
\end{proof}
By Lemma \ref{lem:scon 2dires}, we now have
\begin{Lemma}\label{lem:contrac}
	There exists a module $\sigma:\R^+\to \R^+ $ with $\lim_{h \to 0}\sigma(h) =0$ such that
	\[\operatorname{diam}S_h(x_0) \leq \sigma (h).\]
\end{Lemma}
   
    Starting from this Section, unless otherwise stated, we always consider the 
{\sl mixed problem:
	\begin{equation}\label{eq:mixed problem 1}
		\det  D^2 u  =f      \text{ in } \Omega,   \quad
		D_{n} u=\phi^0                 \text{ on } G:=\partial \Omega \cap B_c(0), 
	\end{equation}
	where $ \phi^0(x'):=D_nu(\G x')$ satisfies    $|\phi^0 | \leq C|x'|$ around $0$. }
The purpose of this section to show the good shape property around $0$.

\begin{Theorem}\label{lem4.2 good shape lemma}
		For all small $h>0$, we have
	\begin{align}
		\label{eq:gsl decompose}	&   c\P S_h(0) \subset   \P  G_h(0) \subset \P S_h(0),  \\
		\label{eq:gsl vol bound}	&    ch^{n/2} \leq \left|S_h(0)\right| \leq C h^{n/2} ,	  \\
		\label{eq:gsl balancing}	&     \P  G_h(0) \subset -C\P  G_h(0).
	\end{align}
\end{Theorem}
\begin{Remark}
	By noting that 
	\[	u(x)-u(y)- \nabla u(y) \cdot (x-y) \lesssim  C|x-y|^{2} +\varepsilon_0, \quad \forall x \in \partial \Omega\]
	holds at $y \in \partial \Omega$ sufficiently close to $0$ for small $\varepsilon_0>0$. We can also show the good shape properties at $y \in \partial \Omega$ sufficiently close to $0$ for $h \geq \sigma_{4}(\varepsilon_0)$. Then the boundary section $S_h(y)$ looks like the cylindrical domain
	\[  \left\{ z+t\beta(y) |\, y+t\beta(y) \in S_h(y) \text{ and } z\in \P^{y} G_h(y) \right\}\]
	with the projection $\P^{y} G_h(y)$ being balanced about $y$, where $\P^{y} $ is the projection along the $\beta(y)$ direction to the tangent plane of $\partial \Omega$ at $y$.
\end{Remark}

The right side of \eqref{eq:gsl vol bound} is the result of Lemma \ref{lem:sec vol lbd}.The remaining parts will be proven by Lemmas  \ref{lem:gsl decompose}, \ref{lem:gsl vol bound} and \ref{lem:gsl balancing}.

\begin{Lemma}\label{lem:gsl decompose}
		For all small $h>0$, we have
	\begin{equation} \label{eq:gsl 1decompose}
	  \P G_h(0)  \subset \P S_h(0) \subset \left(1+C \|\phi^0\|_{L^{\infty}}\right)\P G_h(0) \subset 
	  C\P G_h(0).
	\end{equation}
\end{Lemma}

\begin{proof}
	For any fixed unit vector $e$ such that $e\bot e_n$, we let 
	\[
	K:= \frac{\sup \left\{s |\, se\in  \P S_h(0)\right\}}{\sup \left\{s |\, te\in  \P G_h(0), \  \forall t\in (0,s)\right\}} .  
	\]
	
	To simplify, let us assume that  $e=e_1$ and that the two maxima in the above equation are achieved by points $y=(y_1,0,y_n)\in \overline{S_h(0)}$ and $z=(z_1,0,z_n) \in \overline{G_h(0)}$ respectively, where $y_1=\sup \left\{s |\, se_1\in  \P S_h(0)\right\}$ and $z_1=\sup \left\{s |\, se_1\in  \P G_h(0), \  \forall t\in (0,s)\right\}$. Then, $u(z)=u(y)=h$, $y_1 = K z_1$, and $ y_n \geq 0$ is small. Additionally, $z$ lies below the line connecting  $y$ and $0$, and we let
	\[H:=\frac{y_n}{K}-z_n \leq \frac{y_n}{K}\lesssim K^{-1}.\]
	Lemma \ref{lem:gsl dn voln-1} implies that	$Hy_1 \lesssim h$, and hence $KHz_1 \lesssim h$.  Since $y_1=Kz_1$, we see that
	\[z+He_n= K^{-1}y\in  \left\{sy,\  s\in (0,1) \right\} \cap \left\{z+te_n, \ t\in [0, \infty)]\right\}.\]
	The convexity implies that
	\[u(K^{-1}y) \leq K^{-1}  u(y) +(1-K^{-1})u(0)  =  K^{-1}h ,\]
	while \eqref{eq:nb value lip} implies that  
	\[u(z+He_n) \geq u(z)-  \|\phi^0\|_{L^{\infty}} H z_1 \geq (1-CK^{-1}\|\phi^0\|_{L^{\infty}})h.\]
	Combining these two inequalities, we obtain that $ K \leq  1+C\|\phi^0\|_{L^{\infty}}$.
\end{proof}


	Now, for any point $x \in \Omega$ near $0$ and constant $\kappa \in [0,1]$, we can take the point 
	\begin{equation}\label{eq:proj nb point}
		y_{\kappa,x}=\G \left(c\P( \kappa x)\right) \in \partial \Omega,
	\end{equation}
	 so that  $y_{\kappa,x}'= c\kappa x'$ and 
	\[  u(y_{\kappa,x})\leq u(\kappa x) \leq \kappa u(x)+(1-t)u(0) \leq \kappa u(x).  \]

\begin{Lemma}\label{lem:gsl vol bound}
	For all small $h>0$, we have
	\[\left|S_h(0)\right| \gtrsim h^{\frac{n}{2}}.\]
\end{Lemma}
	\begin{proof}
	Let $h$ be small. In contrast to our Lemma, we now assume that $\epsilon:=\frac{ \left|S_h(0)\right| }{h^{\frac{n}{2}}}   $ is very small. Let $m_{h}$ denote the mass center  of $S_h:=S_h(0)$, then $z:=y_{\frac{1}{2},m_{h}} \in S_h$.
	 Note that $0\in \P S_h$, so $\P S_h$ is balanced about $ z'$.  By John's Lemma \ref{lem2.9 John's Lemma}, we can find a transformation $\D'=\operatorname{diag}\left\{ d_1,\cdots ,d_{n-1}\right\}$ in some suitable orthogonal frame such that
	    \begin{equation}\label{eq:sec gsl vol1}
	    	\P S_h-z' \subset \D' B_C'(0) \text{ and } \det \D'  \approx  \operatorname{Vol}_{n-1} \P S_h.
	    \end{equation}
	    Now, let $d_n:=\sup \left\{t|\ z+te_n \in S_h(0) \right\}$ and $\D=\operatorname{diag}\left\{\D',d_n\right\}$, then
	    \begin{equation}\label{eq4.7 eqaution gsl prop 1 eq3}
	     \det  \D \approx \left|S_h\right|,
	    \end{equation}
	   Let  $\A x:= x-z-\ell(x')e_n$, where $\ell$ is the support function of $G$ at $z$,  then $ \D^{-1}\A {S}_1 \subset B_C^+(0)$. Consider the function
	    \[ w(x)=  \frac{u\left(\A^{-1}\D x\right)}{h}. \]
	  Then $w(0)=\frac{u\left(z\right)}{h}\leq \frac{1}{2}$, and by Lemma \ref{lem:gsl dn voln-1},  we have  on $\D^{-1}\A \left(\partial S_h \setminus G_h\right)$ that
	  \[ |D_nw(x)|= \left|\frac{ d_n \phi^0(\D' x')}{h} \right| \lesssim \frac{d_n|D'x'|}{h} \lesssim d_n\operatorname{diam} \left(\P S_h\right) \frac{|x'|}{h} \lesssim 	\frac{\left|S_h\right|}{ h^{\frac{n-2}{2}}} \frac{|x'|}{h} \lesssim \epsilon |x'|.\]
	 Therefore,
	  	\[\begin{cases}
	  	\det D^2 w = \frac{(\det \D)^2}{h^n}f(\A^{-1}\D x)\approx \epsilon^2    & \text{ in }  \D^{-1}\A S_h , \\
	  	|D_{n}w|  \lesssim \epsilon                &  \text{ on } \D^{-1}\A  G_h, \\
	  	w=1                   &  \text{ on }   \D^{-1}\A \left(\partial S_h \setminus G_h \right),
	  \end{cases}\]
    
	We now claim that for large enough  $K>0$, the convex function
	\[ \upsilon(y)=w(0)+\sum_{i=1}^{n}\frac{ y_i^2}{8nK^2}+ \frac{y_n}{4K} \]
	satisfies
	\[\begin{cases}
		\det  D^2 {w} < \det  D^2\upsilon        &  \text{ in } \D^{-1}\A S_h , \\
		D_{n} {w} < D_{n}\upsilon                     &\text{ on }\D^{-1}\A  G_h,  \\
		\upsilon<  {w} =1                   & \text{ on }  \D^{-1}\A \left(\partial S_h \setminus G_h\right).
	\end{cases}\]
	Since the oblique assumption $e_n \cdot \nu > 0$ still holds, according to Lemma \ref{lem:comp pN}, we have $\upsilon<w$, which contradicts the fact that  $\upsilon(0)= w(0)$.  This completes the proof.

	By direct computation, we can verify our claim as follows:
	\begin{itemize}
		\item in $\D^{-1}\A S_h$,  $\det  D^2 \upsilon \gtrsim  1 >C\epsilon^2 >\det  D^2w $;
		\item on $\D^{-1}\A  G_h $, $D_{n} \upsilon \gtrsim 1  > C\epsilon > D_{n}w$;
		\item on $\D^{-1}\A \left(\partial S_h \setminus G_h\right)$,   $\upsilon <   w(0)+\frac{1}{2} \leq 1 =w$.
	\end{itemize}
	\end{proof}

\begin{Lemma}\label{lem:gsl balancing}
	For all small $h>0$, we have
	\begin{equation}\label{eq4.22 equation gsl3bl eq1}
		\P  G_h(0) \subset -C\P  G_h(0).
	\end{equation}
	Therefore, $\P G_h(0)$ is balanced about $0$. 
\end{Lemma}
\begin{proof}
	As in the proof of Lemma \ref{lem:gsl vol bound}, we choose the point $m_{h}$ and set  $z=y_{\frac{1}{2},m_{h}}$. We also choose an appropriate orthogonal so that the diagonal transformation $\D'=\operatorname{diag}\left\{ d_1,\cdots ,d_{n-1}\right\}$ satisfies \eqref{eq:sec gsl vol1}, and let $\D=\operatorname{diag}\left\{\D',d_n\right\}$. For simplicity, we assume that $d:=d_1 \geq d_2 \geq \cdots \geq d_{n-1}  \gtrsim h^{\frac{1}{2}}$. Considering the function
		    \[\tilde{u}(x)=  \frac{u\left(\D x\right)}{h} \text{ and } \tilde{\phi^0}(x') =D_n u(\D x)= \frac{d_nD_n \phi^0(\D'x')}{h},\]
		we write $\tilde{S}_1 :=\tilde{S}_1^{\tilde{u}}= \D^{-1} S_h(0)$ and $\tilde{G}_1 :=\tilde{G}_1^{\tilde{u}}= \D^{-1} G_h(0)$. 
		
			Let $K$ be a large constant, and let $\delta =\frac{1}{K^3}$ and $\kappa = K^{-\frac{1}{n+1}}$ be small. If
		\[\P  G_h(0) \not\subset -\delta^{-1}\P  G_h(0).\]
		 From \eqref{eq:gsl decompose} (Lemma \ref{lem:gsl decompose}), we have
	\[\operatorname{dist}(0,\partial  \P \tilde{S}_1 ) \approx \operatorname{dist}(0,\partial  \P \tilde{G}_1)  \leq \delta.\]
	This implies that $d\gtrsim \delta^{-1} h^\frac{1}{2}$. 
	Therefore, we have on $\tilde{G}_1$ that
	\begin{equation}\label{eq4.23 equation gsl3bl eq2}
		|\tilde{\phi^0}(x')| \lesssim \frac{d_n|D'x'|}{h}	\lesssim
		\begin{cases}
			 \frac{d_nd_1|x_1|}{h} \lesssim \delta & \text{ if } n=2,  \\
			 	  \frac{d_nd_1|x_1|+d_nd_2}{h} 
			 \lesssim \frac{h^{\frac{1}{2}}}{d_2}	|x_1| +\frac{h^{\frac{1}{2}}}{d}
			 \lesssim \frac{h^{\frac{1}{2}}|x_1|}{d_2}+\delta & \text{ if } n\geq 3.
	\end{cases}
	\end{equation}
	
	Furthermore, when $n \geq 3$, we will describe the boundary values more carefully. Take a point $q=\tilde{\G} q'$ such that  $\operatorname{dist}(0, q')=\operatorname{dist}(0,\partial  \P \tilde{G}_1 )  $,  and without loss of generality, assume that $q_1 \geq 0$. We consider two cases:  $ d_2 \leq K h^{\frac{1}{2}}$ or $ d_2 \geq K h^{\frac{1}{2}}$.
	
	{\bf Case 1}:  $ d_2 \leq K h^{\frac{1}{2}}$. Then  we have
	\[ B_{cK^{-1}}''(0) \subset \P \tilde{S}_1 \subset B_C'(0).\]
	By convexity, we also have $\sup \left\{t|   t  e_1 \in \P \tilde{G}_1\right\}   \lesssim K\delta \lesssim K^{-2}$, so \eqref{eq4.23 equation gsl3bl eq2} becomes
	\[	|\tilde{\phi^0}(x')|\lesssim K^{-1} \text{ provided } x_1\geq -\frac{1}{K}.\]

	{\bf Case 2}:  $ d_2 \geq K h^{\frac{1}{2}}$.  Then \eqref{eq4.23 equation gsl3bl eq2} implies
	\[|\tilde{\phi^0}(x')| \lesssim K^{-1}.\]
	In this case, we consider a new orthogonal coordinate system with origin at zero and ${e}_1= q'/|q'|$ as the axis, while keeping the normal and tangent planes invariant. We still denote this coordinate system as  $(e_1, e_2,\cdots , e_n)$.

	Regardless of the case and dimension, we always define 
	\[E=\Omega \cap \left\{x_1\geq -\frac{1}{K}\right\} =\Omega \cap\left\{  -\frac{1}{K} \leq x_1  \leq \frac{C}{K^2}\right\}\]
	 and let 
	$m_{E}$ be the mass center of $E$. We take $z_E=y_{\kappa,m_{E}}$ and  $\A_{E} x:= x-z_E-\ell_E(x')e_n$, where $\ell_E$ is the support function of $\tilde{G}$ at $z_E$, and consider the function
	  \[ w(x):=\frac{\tilde{u}\left(\A_{E}^{-1}x\right)}{h}. \]
	  We have $w(0) \leq \kappa $, and the domain $F=\A_{E} E$ satisfies
	  \[F \subset   [-CK^{-1}, CK^{-1}] \times B_C''(0) \times [0,C\kappa^{-1}].\]
	  Let $a= \frac{1}{K}-z_E \cdot e_1$. Since $\A_{E} \left\{x_1 \geq -K^{-1}\right\}=\left\{x_1 \geq -a\right\} $, we can write $\partial F =\partial_1 F \cup \partial_2 F \cup \partial_3 F$, where
	  \[ \partial_1 F =\partial F \cap \A_{E} \tilde{G}_1(0) \cap  \left\{ y_1 > -a\right\} ,  \ \
	  \partial_2 F=\partial F \cap \left\{ y_1 = -a\right\},  \ \
	  \partial_3 F=\partial F  \setminus (\partial_1 F \cup \partial_2 F). \]  
 Since $|z_E  \cdot e_1| \leq C\kappa K^{-1} $, we have $a\approx K^{-1}$.
		Noting that $-y \cdot	p  \leq  w(0) -w(x) \leq  w(0)\leq \kappa$ holds for any $x$ on $\partial_2 F$ any any $p \in \partial w(x) $, we have the following.
	\begin{equation}\label{eq4.24 equation gsl3bl eq3}
		\begin{cases}
			\det D^2 w\approx c   & \text{ in }  F , \\
			D_{n}w \lesssim K^{-1}      &  \text{ on } \partial_1 F ,  \\
			D_{-y}w  \lesssim \kappa          &  \text{ on } \partial_2 F , \\
			w=1                   &  \text{ on }  \partial_3 F.
		\end{cases}
	\end{equation}

	Let
	\[Q(x)= \frac{1}{4n}\left(\frac{x^2}{2}+2x\right), \ x\in \R \]
	and consider the convex function
	\[ \upsilon(y)=w(0)+\left[Q\left(\frac{y_1}{ a}\right) +\kappa \left(Q\left(\frac{ \kappa y_n}{ C}\right)+  \sum_{i=2}^{n-1}Q\left(\frac{y_i}{ C}\right) \right) \right]. \]
	We claim that
	\begin{equation}\label{eq:proof gsl 4}
		\begin{cases}
			\det D^2 w < \det D^2 \upsilon   & \text{ in }  F, \\
			D_{n}w  < D_{n}\upsilon   &  \text{ on } \partial_1 F  , \\
			D_{-y}w  <  D_{-y} \upsilon        	              &  \text{ on } \partial_2 F , \\
			\upsilon < w                  &  \text{ on }  \partial_3 F.
		\end{cases}
	\end{equation}
	Then, we can apply the comparison principle\footnote{A short segment along the vector $-y$ at point $y\in \partial_1 F \cap \partial_2 F$ lies in $\bar F$, and a modified version of Lemma \ref{lem:comp pN}  still implies  $ \upsilon < w$.} to obtain $ \upsilon < w$, which contradicts the fact that  $\upsilon(0)= w(0)$.  This completes the proof.

By direct computation, we can verify \eqref{eq:proof gsl 4} as follows:  
	\begin{itemize}
		\item in $F$,  $\det  D^2 \upsilon \gtrsim\kappa^{n+1} a^{-2} \gtrsim K^2 \kappa^{n+1}  \gtrsim K  >\det  D^2 {w}$;
		\item 	on $\partial_3 F$,  $\upsilon\leq \upsilon(0)+ \frac{3n}{4n} = w(0)+ \frac{3}{4} \leq  \kappa+ \frac{3}{4} <1={w}$;
		\item on $\partial_2 F$,  $D_{-y}\upsilon \geq D_{-y_1} Q(\frac{y_1}{ a}) -C\kappa \geq c-C\kappa > \kappa \geq D_{-y} w(0)$;
		\item 	on $\partial_1 F$,  $D_{n}\upsilon \gtrsim \kappa^2 >CK^{-1}\geq D_{n}w  $.
	\end{itemize}
\end{proof}

\section{Uniformly Strict Convexity for Normalization Family}\label{sec:uscl}
Following Section \ref{sec:gsl}, we now introduce the {\sl (Sliding) Normalization Family} related to the oblique derivative problem. We can assume that $\nabla' \phi(x')=ae_1$.
Fix a small $h$ and  $y_h \in \P S_h (0)$ such that
 \[ |y_h \cdot e_1|=\sup\left\{ x\cdot e_1 |\, x\in \P S_h(0)\right\}.\]
Consider the sliding transformation
\[\A_h x=x+  \sum_{i=2}^{n-1}\frac{y_h \cdot e_i}{y_h\cdot e_1} x_1  e_i.\]
Using the good shape lemma (Theorem\ref{lem4.2 good shape lemma}),  we can find a new orthogonal coordinates in which the directions of $e_1$ and $e_n$ remain unchanged, and
\[	\D_h B_{c(n)}(0) \subset \A_h^{-1}  S_{h}(0) \subset  \D_h B_{C (n)}(0) \]
holds for some diagonal matrix $\D_h :=\operatorname{diag}(d_{1}(h), \cdots, d_n(h))$, where we further require
\[\Pi_{i=1}^n d_{i}(h)=\det \D_h=|S_h|.\]
Note that the sliding transformation $\A_h$ does not change the tangent plane $\R^{n-1}$. Assuming that $d_2 \geq d_3 \geq \cdots \geq d_{n-1} \geq ch^\frac{1}{2}$, we also have
\[	d_1 d_2 d_n \leq h^{\frac{3}{2}} \text{ when } n \geq 3.\]
\begin{Definition}\label{def4.6 sliding normalization}
	Let $\T_h=\A_h\circ\D_h= \operatorname{diag}\left\{\T_h',d_n(h)\right\}$, we define the sliding normalization $(\tilde{u}, \tilde{\Omega} )$ of $(u,\Omega)$ as follows
	\[	\tilde{u}_h(x):= \frac{u(\T_h x)}{h}, \quad x \in \tilde{\Omega}_h:= \T_h^{-1} \Omega.\]
	For simplicity, we omit the subscript $h$ when there is no confusion. $\tilde{u}$ is a solution to 
	\begin{equation} \label{eq4.16 slide normalization equation}
		\begin{cases}
			\det D^2 \tilde{u}=\tilde{f}       & \text{ in }\tilde{S}_1 , \\
			D_{n} \tilde{u} =\tilde{\phi^0 }                &  \text{ on }\tilde{G}_1 , \\
			\tilde{u}=1                   &  \text{ on }\partial  \tilde{S}_1 \setminus \tilde{G}_1 ,
		\end{cases}
	\end{equation}
	where
	\[	\tilde{f}(x) =    \frac{(\det  \D)^2}{h^n}f(\T_h x),\quad \tilde{\phi^0 }(x') =\frac{d_n\phi^0  ( \T'_h x')}{h}.\]	
\end{Definition}

  We now define the  {\sl Neumann coefficient}
\[a_h=\frac{d_1(h) d_n(h) }{h}a ,\quad  \text{ then }\left|a_h\right|\lesssim  1, \quad \text{ and if } n\geq 3, \quad  \left|a_h\right| \lesssim\frac{h^\frac{1}{2}}{d_2} \lesssim 1. \] 
We will see that there exists ${\alpha_0} >0$ such that
\begin{equation}\label{eq:normal phi c1a}
	\left|\tilde{\phi}_h(x')^0-a_hx_1\right| \lesssim  |d_n(h)|^{\alpha}  \text{ on } \tilde{G}_1,
\end{equation}
Also,  the quadratic growth assumption \eqref{eq:qg chgsl} gives, 
\begin{equation}\label{eq:qgc n-2 limit}
	|a_h|	\tilde{u}_h(\G (0,x'')) \lesssim \sum_{i=2}^{n-1}d_i(h)^2x_i^2
\end{equation}

 In this section, we discuss the uniformly strict convexity of the solutions of normalization families, which will be used in the next section to compensate for the lack of compactness. 



\begin{Theorem}[Uniformly Strict Convexity]\label{lem:uscl}
	There exists small positive constant  $\delta_0$ such that  
	\begin{align}
		\label{eq:uscl  upper}	&   (1+\delta_0)\delta_0S_{h}(0) \cap \Omega \subset S_{\delta_0h}(0) ,  \\
		\label{eq:uscl lower}	&    (1+\delta_0)S_{\delta_0 h}(0) \cap \Omega \subset   S_{h}(0) , \\
		\label{eq:uscl engulf}	&   S_{\delta_0 h}(x) \subset  c S_{h}(0)  \text{ if } x\in S_{\delta_0 h}(0).
	\end{align}
\end{Theorem}
By iteration, \eqref{eq:uscl  upper} and \eqref{eq:uscl lower} imply for $m =1,2,\cdots,$
\[ \delta_0^{m \left(\log \left( 1+\delta_0\right)+1\right)}  S_{h} (0)\subset  S_{\delta_0^m h} (0)   \subset \delta_0^{m\log (1-\delta_0)} S_{h}(0). \] 
Thus,
\[ 
c_{\delta_0}\left|x-x_0\right|^{\frac{1}{{\alpha_0}}}\leq u\left( x \right)-u(x_0)-\nabla u(x_0) \cdot \left(x-x_0\right) \leq C_{\delta_0}\left|x-x_0\right|^{1+{\alpha_0}} \text{ for } x \in S_{ t,p}^u(0),
\]
with ${\alpha_0}= \inf\left\{\log \left( 1+\delta_0\right), \frac{1}{\log (1-\delta_0)},\alpha \right\} \in (0,1)$.
Recalling Lemma \ref{lem:lip on modules}, it will imply that
\begin{equation}\label{eq:uscl gradient c1a}
	| \nabla \tilde{u}(x)| \lesssim |x|^{{\alpha_0}} \text{ in } \tilde{S}_1(0).
\end{equation}

 We refer to {\sl \eqref{eq:uscl  upper} as the upper uniformly strict convexity lemma and \eqref{eq:uscl lower} as the lower strict convexity lemma}.   By \eqref{eq:uscl gradient c1a}, the engulf property \eqref{eq:uscl engulf} is a direct consequence of \eqref{eq:uscl  upper} and \eqref{eq:uscl lower}.  
\begin{proof}[\bf  Proof of Theorem \ref{lem:uscl} in the case of  $ \bf n =2$]
	We can apply Lemma \ref{lem:contrac} to the normalization $(\tilde{u}_h,\tilde{S}_1^h)$ to obtain that $\operatorname{diam}\tilde{S}_t^h(0) \leq \sigma (t)$, which  yields \eqref{eq:uscl lower}.  Then, \eqref{eq:uscl  upper} follows from \eqref{eq:uscl lower} and  the good shape lemma \ref{lem4.2 good shape lemma}.
\end{proof}

\begin{Remark}
	When $n=2$, recall \eqref{eq:uscl gradient c1a}, or when $n \geq 3$,  recall \eqref{eq:nb value c11},  we always have $\left|\phi^0(x')-ax_1\right| \lesssim |x'|^{1+{\alpha_0}}$ on $G$ , and thus \eqref{eq:normal phi c1a}  is given by direct calculation as follows:
		\[	\left|\tilde{\phi}^0(x')-a_hx_1\right| \lesssim \frac{|d_1| \cdot|d_n|^{1+{\alpha_0}} }{h} \lesssim |d_n|^{{\alpha_0}} \lesssim |\sigma (h)|^{{\alpha_0}} \text{ on } \tilde{G}_1,\]
\end{Remark}

\begin{Remark}\label{lem5.3 local to global for normalized} 
	Let  $\Theta$ be any local estimate or property that is related to $(\tilde{u},\tilde{\Omega})$, and assume that  $\Theta$ is invariant under the normalization transformation and linear transformations that preserve the tangent plane.  From \eqref{eq:uscl engulf}, it follows that if $\Theta$ holds locally (near $0$), then $\Theta$ holds globally in $B_C(0)$. In particular, we can use Theorem \ref{lem:uscl} to obtain Lipschitz estimates for $\tilde{u}$ and  $\tilde{g}$ on $B_C(0)$ and $B_C'(0)$.	
%
\end{Remark}

The remaining part of this section is mainly devoted to the proof of Theorem \ref{lem:uscl} in the case of $n\geq 3$. This is divided into two cases: the first case where $|a_h|$ is small and the result will hold naturally, and the second case where $|a_h|$ is not small and we need to use the  quadratic growth assumption \eqref{eq:qg chgsl} and the qualitative strict convexity lemma \ref{lem:scon 2dires}.

\subsection{The Zero Neumann Boundary Value Problem}\label{sec5.1}
%
 We first consider the case where $|a_h|=0$, which leads us to study the following degenerate problem. 
In this section, we consider the reflection $R(x',x_n)=(x',-x_n)$, and the reflection image of a set $S$ is denoted by $RS:= \left\{(x, -x_n)|\, x\in S\right\}$
\begin{Theorem}\label{lem:0nbv problem}
	Assume that $E$ is a bounded convex domain symmetric about the plane $\R^{n-1}:=\left\{x_n=0\right\}$,  and let $f$ satisfies  $ 0<\lambda \leq \hat f \leq \Lambda$ and
	$\hat f \circ R = \hat f $ in $E$. Let 
	\[S :=  \left\{x\in \bar E|\, x_n \geq \hat g(x') \geq 0, x\in \P E\right\}\]
	 be a closed convex subset satisfying	$\P S = \P E$, where $\hat g$ is nonnegative.
	Assume that $v$ is a convex  solution to 
	\begin{equation} \label{eq:zero nbv eq}
		\det  D^2 v  =\hat f \chi_{ S\cup RS}      \text{ in } E,  \
		v  =0                   \text{ on } \partial E ,
	\end{equation}
	where $\chi $ is the characteristic function.
	Then, $D_n v= 0$ on $\hat G:=\partial S \setminus \partial E$,  $v$ satisfies the uniformly strict convexity Theorem \ref{lem:uscl} on $ \hat G$,
	and $v \in C_{loc}^{1,{\alpha_0}}(E)$ for small constant ${\alpha_0} ={\alpha_0}(n,\lambda ,\Lambda)$.
\end{Theorem} 
\begin{proof}

	
	
	\[ \hat\G x= 	\begin{cases}
		(x',|x_n|)   &   x\in S\cup RS , \\
		(x',\hat g(x'))         &  x\in E\setminus \left(S\cup RS\right).
	\end{cases} \]
Without loss of generality, we assume that  
$|S| \neq 0$. By convexity, we have $u \circ \hat\G \geq v $.   
For any points $y,z\in E$,  the point $\hat\G (\frac{y+z}{2})$ is contained in the simplex generated by the vertices $\hat\G y$, $\hat\G z$,  $R\hat\G y$ and $R\hat\G z$. Thus, 
	\[\hat\G \left(\frac{y+z}{2}\right)= \kappa\left(\frac{\hat\G y+\hat\G z}{2}\right)+(1-\kappa)\left(\frac{R\hat\G y+R \hat\G z}{2}\right)\]
	holds for some $\kappa\in [0,1]$. Therefore,   $v\left(\hat\G \left(\frac{y+z}{2}\right)\right)\leq \frac{v\left(\hat\G y\right)+v\left(\hat\G z\right)}{2}$,
  and hence $u \circ \hat\G$ is a convex function satisfying
  	\[ 
  \det  D^2 u \circ \hat\G  \geq \hat f \chi_{S\cup RS}=\det  D^2 {v}          \text{ in } E, 
  u \circ \hat\G =0 =v                  \text{ on } \partial E. \]
By the comparison principle, we have $v \geq  u \circ \hat\G $, so $u=u \circ \hat\G$.
 In particular, we have $D_n v=0 $ on $E\setminus \bar S$, which means that all the information of $v$ is contained in $\bar S$.

	For any fixed point $y \in \hat G$,  we now choose $p  \in \partial v(y)$ such that $p\cdot e_n\geq 0$ and let $S_h^{v} (y):=S_{h,p}^{v} (y)$. We claim that if $S_h^{v} (y)\cap \bar S$ is strictly contained in $E$, then $S_h^{v} (y)$ is of good shape  at $y$. More precisely, there exist positive constants $c$ and $C$ depending only on $n$, $\lambda$ and $\Lambda$ such that
		\begin{align}
		\label{eq:gsl0 decompose}	&  \P \left( S_h^{v}(y) \cap\hat G\right)=\P\left(S_h^{v}(y) \cap S\right) =\P S_h^{v}(y),  \\
		\label{eq:gsl0 vol bound} & ch^{\frac{n}{2}} \leq \left|S_h^{v}(y) \cap S\right| \leq C h^{\frac{n}{2}} ,	 \\
		\label{eq:gsl0 balancing}   &  \P  G_h^{v}(y) \subset -C\P  G_h^{v}(y),
	\end{align}
	where 	$ G_h^{v}(y) =S_h^{v}(y)\cap \hat G$.
 	We obtain \eqref{eq:gsl0 decompose}	 from the fact $v=v \circ \hat \G$, while the right-hand side of \eqref{eq:gsl0 vol bound} comes from Lemma \ref{lem:sec vol lbd}.
	Now let  $z' $ be the mass center  of $\P S_h(y)$,  let $z=  \hat \G (z',0)$, nd consider the sliding transformation $\A_z x= \left(x',x_n-\ell_z (x')\right)$,  where $\ell_z$ is any support function of $\hat G$ at $z$. Let $S_z = \A_z \left(S_h(y)  \cap S\right)\cup R  \A_z \left(S_h(y)\cap S\right)$,  $ E_z:= \A_z S_h(y) \cup R  \A_z S_h(y)$, and consider the function
	\[   v_z(x)=   v\left(\A_z^{-1}(x',|x_n|)\right)-  v(y)- p' \cdot (x-y) -h, \quad x\in E_z. \]
	  We can normalize $v_z$ and $E_z$ such that $B_{c}(z) \subset  E_z \subset B_{C}(z)$ and $\left\| \det D^2 v_z \right\|_{L^{\infty}} \approx 1$. Noting that $\P S_z=\P E_z $ and  $f \chi_{E_z}$ satisfies the double measure property at $z$ (see  \cite{[C3]} or Section 3.1 in  \cite{[G]} for the definition), we have 
	  \[-v_z(y) \geq - v_z(z) \approx 1.\]
	   Then, the Aleksandrov’s Maximum Principle implies that $E_z$ is balanced about $z$.  This gives \eqref{eq:gsl0 vol bound}  and \eqref{eq:gsl0 balancing}.
	
	By following the standard proof of Theorem 1 in   \cite{[C1]}, we can see that the good shape lemma will prohibit the existence of lines on the graph of $v$ with endpoints on $\hat{G} \cup R\hat{G}$, thereby giving the corresponding uniform strict convexity and $C^{1,\alpha}$ regularity of the solution.
	
\end{proof}
%

\vspace{8pt}


\subsection{Proof of  Theorem \ref{lem:uscl}}\label{sec5.2}

\begin{proof}


%
		By the good shape lemma, we have the decomposition
	\[c\left(\P \tilde{G}_1(0) +(\tilde{S}_1(0) \cap \left\{ te_n|\, t\geq 0 \right\})\right)\subset \tilde{S}_1(0)
	\subset C\left(\P \tilde{G}_1(0) +(\tilde{S}_1(0) \cap \left\{ te_n|\, t\geq 0\right\})\right).\]
	Using a similar iterative method, the proof of Theorem \ref{lem:uscl} is equivalent to showing separate proofs along the normal direction $\tilde{S}_1(0) \cap \left\{ te_n: t\geq 0\right\}$ and the tangent direction $\P \tilde{G}_1(0)$.
	
	We first   prove \eqref{eq:uscl  upper}
	in the normal direction. 	Let $k=\frac{1}{2C}<1$, it is enough to prove
	\[\delta\tilde S_1(0)\cap \left\{te_n|\, t\geq 0\right\}\subset k \tilde S_{\delta}(0)\cap \left\{te_n|\, t\geq 0\right\} \text{ for a small } \delta >0 .\]
	Otherwise, we have
	\[ \tilde{u}(te_n) \geq kt \text{ for } t\geq \delta.\]
	According to Lemma \ref{lem2.5 balance lemma} and good shape Lemma (Theorem \ref{lem4.2 good shape lemma}),
	\[	\frac{\operatorname{Vol}_{n-1}  \left(\P\tilde{S}_t(0) \cap \P (-\tilde{S}_t(0)) \cap B_c'(0) \right)}{t^{\frac{n-2}{2}}}  \geq ck \text{ for } t\geq c\delta.\]
	Since $\left\|\tilde{u}\right\|_{L^{\infty}\tilde{S}_1(0)} \leq 1$, Lemma \ref{lem:quasi sconx} implies $\delta \geq \sigma_{1}(k)$, which is a contradiction.
	\vskip 0.5cm
	
		 Let $\kappa<\sigma_{1}(k) $ be a small positive constant, and let ${\alpha_0}>0$ be a constant that is smaller than the values given by Theorem \ref{lem:0nbv problem}, taking
	\[ K=\kappa^{-n-1}, \delta =K^{-\frac{2(n+7)(1+{\alpha_0})}{{\alpha_0}}} \text{ and } \delta_0= \delta^{2n+6}.\] 
	To prove \eqref{eq:uscl  upper} in the tangent direction and \eqref{eq:uscl lower}, we assume that $h$ is small enough such that $ \operatorname{diam}S_{h_0}(0) \leq \delta^4$ and consider two cases.

	\textbf{Case 1}. There exists $s \in [\delta^2 h,h]$  such that $d_2(s)\geq Ks^{\frac{1}{2}}$.
	For simplicity, let us assume that $s=h$ and $ d_2(h) \geq Kh^{\frac{1}{2}}$.
	In this case, \eqref{eq:normal phi c1a} implies $\left|\tilde{\phi^0}\right| \leq  \left|\frac{Ch^{\frac{1}{2}}}{d_2}\right|+C\sigma(h)^{\alpha} \lesssim  K^{-1}$ on $B_{C}'(0)$.  Thus, we can use Lemma \ref{lem:gsl decompose} for $\tilde{u}$ to obtain 
	\[\P \tilde{S}_1(0)  \subset (1+CK^{-1})\P \tilde{G}_1(0).\]
	In  Theorem \ref{lem:0nbv problem}, we take $\hat f=\tilde{f}$, $S=\tilde S_1$. Let $E$ be the  convex hull of $S \cup RS$, and let  $v$ be the solution to 
	\[	\det  D^2  v  =\hat f \chi_{ S\cup RS}      \text{ in } E,  \
	 v  =0                   \text{ on } \partial E . \] 
	 Combined with the Aleksandrov’s Maximum Principle, we have
	\[  v(x) \geq  -C(n,\Lambda) \operatorname{dist}(x,\partial E)^{\frac{1}{n}} \geq - CK^{-\frac{1}{n}} \geq -\kappa, \quad  \forall x
\in  \partial \tilde{S}_1 \setminus \partial \tilde{G}_1. \]
	By comparing with the functions $w^{\pm}(x):=\frac{1+v(x)}{1\mp \kappa}\pm2\kappa(2C-x_n)$, we obtain that
	\[ \left\|\tilde u-1-v\right\|_{L^{\infty}} \lesssim \kappa.\]
Since $\tilde u(0)=0$,   we have
	\[\begin{split}
		 \min\left\{\tilde u(\G x'), \tilde u(\G (-x'))\right\}  
		 & \leq  \min\left\{v(\G x'), v(\G (-x'))\right\}-1 \\ 
		 & \leq  \min\left\{v(\G x'), v(\G (-x'))\right\}-v(0)+ C\kappa \\
		  &\leq C|x'|^{1+{\alpha_0}}+ C\kappa.
	\end{split}\]
	and
	\[\begin{split}
		\max\left\{\tilde u(\G x'), \tilde u(\G (-x'))\right\}   
		& \geq  \max\left\{v(\G x'), v(\G (-x'))\right\} \\ 
		& \geq  \max\left\{v(\G x'), v(\G (-x'))\right\}-v(0)- C\kappa  \\
		& \geq  c|x'|^{\frac{1+{\alpha_0}}{{\alpha_0}}}- C\kappa.
	\end{split}\] 
	Recalling \eqref{eq:gsl0 balancing}, we obtain
	\[ c|x'|^{\frac{1+{\alpha_0}}{{\alpha_0}}}- C\kappa \leq  \tilde u(\G x') \leq C |x'|^{1+{\alpha_0}}+ C\kappa ,\]
	which  proves both \eqref{eq:uscl  upper} and \eqref{eq:uscl lower} in the tangent direction.

	Next, we prove   \eqref{eq:uscl lower} in the normal direction. That is
	\[\tilde S_{\delta_0}(0)\cap\left\{te_n|\, t\geq 0\right\}\subset (1-\delta_0)\tilde S_1(0).\]
	Suppose for contradiction that there exists a point $y\in \tilde S_{\delta_0}(0)\setminus (1-\delta_0)\tilde S_1(0)$ such that $y\cdot e_n>1-\delta_0$ and
	$u(y)\leq \delta_0$. Let $z=(1-\delta_0)e_n$. By convexity, we have
	$ \tilde u(z) \leq \delta_0. $
	
	Without loss of generality, assume that  $\tilde{u}(e_n)=1$. By noting that
		\[ \tilde{u}(x)\geq c|x'|^{\frac{1+{\alpha_0}}{{\alpha_0}}}- C\kappa \geq c|x_n|^{\frac{1+{\alpha_0}}{{\alpha_0}}}- C\kappa\geq 4\delta_0 \left(x_n-\frac{1}{2}\right)\text{ on } \partial \tilde{S}_1  \cap \left\{x_n \geq \frac{1}{2}\right\}.\]
	we see that $F:=\left\{ x\in \tilde S_1|\tilde{u}(x) \leq 3\delta_0 \left(x_n-\frac{1}{2}\right)\right\}$ satisfies $F \subset \subset \tilde S_1$. Consider the convex function $w(x)=\tilde u(x)-3\delta_0 \left(x_n-\frac{1}{2}\right)$, we have  $  w(z) \approx - \delta_0 \approx \inf_F w  $. Since  $\lambda \le \det D^2w\le \Lambda$ in $F$ and $ w=0 $ on   $\partial F$, $F$ should be balanced about $z$. However, this is impossible since $z=(1-\delta_0)e_n$, but $\frac{7}{8}e_n \in F$ and $e_n \notin F$, which can be inferred from $w\left(\frac{7}{8}e_n\right)<0$ and $w( e_n)>0$.
	
	\textbf{Case 2}. We have $d_2(s) \leq K s^{\frac{1}{2}}$ for all $s \in [\delta^2 h,h]$. Note that $d_2(s) \geq \cdots \geq d_{n-1}(s) \geq cs^{\frac{1}{2}}$,  which implies that
	\begin{equation}\label{eq5.13 n-2 matrix estimate}
		cs^{\frac{1}{2}}\I \leq  \D_{s}'' \leq CKs^{\frac{1}{2}}\I ,\quad \forall s \in [\delta^2 h,h].
	\end{equation}
	Hence, we have
	\begin{equation}\label{eq5.14 normal slide expansion}
		C\sum_{i=2}^{n-1} K^{-2}x_i^2\leq  \tilde{u}(\G (0,x'')) \leq C\sum_{i=2}^{n-1} K^2x_i^2 \text{ for } |x''| \geq CK\delta.
	\end{equation}
	Since \eqref{eq:uscl  upper} holds along the normal direction, given $t \geq \delta^2$, we still have
	\[ \tilde{u}(te_n) \lesssim t^{1+{\alpha_0} },\quad  \text{i.e.,}\quad 
	  t^{\frac{1}{1+{\alpha_0} }}d_n(h) \lesssim 	d_n(th).\]
	\eqref{eq5.13 n-2 matrix estimate} also shows
	\[ \Pi_{i=2}^{n-1} d_i(th) \geq  K^{\frac{2-n}{2}}t^{\frac{n-2}{2}} \cdot \Pi_{i=2}^{n-1} d_i(h) . \]
	Recalling \eqref{eq:gsl vol bound}, we have
	\[	d_1(th)\lesssim K^{\frac{n-2}{2}}t^{\frac{{\alpha_0}}{1+{\alpha_0} }}d_1(h) .\]
	This means that 
	\begin{equation}\label{eq5.15 d1 decay}
		\P	\tilde  S_t \subset \P \tilde S_1 \cap \left\{ |x_1| \leq CK^{\frac{n-2}{2}}t^\frac{{\alpha_0}}{1+{\alpha_0}}\right\} .	
	\end{equation}
	Now, choose a point $y_t \in  \P \tilde S_{t}$ such that $y_t \cdot    e_1\gtrsim \frac{d_1(th)}{d_1(h)}$, then we have
	\begin{equation} \label{eq5.16 repsent point on e1}
		t \lesssim y_t \cdot    e_1  \lesssim K^{\frac{n-2}{2}}t^{\frac{{\alpha_0}}{1+{\alpha_0} }}.
	\end{equation}
	
	First, we make the following observation. Given points  $P \in \P \tilde{S}_t$ and $Q \notin \P \tilde{S}_t$, the balancing property of $\P \tilde{S}_t$ implies that
	$\pm cP\in \P \tilde{S}_t$ and $\pm CQ \notin \P \tilde{S}_t$. From  \eqref{eq5.14 normal slide expansion}, we obtain
	\[ ct^{\frac{1}{2}}B_{1}''(0) \subset \P \tilde{S}_t \cap \R^{n-2} \subset Ct^{\frac{1}{2}}B_{1}''(0). \]
	Consider the cones
	\[ \Gamma_1^{\pm}(P):=\left\{ \mp cP+s( x \pm cP)|\,  s \geq 0 ,\ x \in ct^{\frac{1}{2}}B_{1}''(0)  \right\} \]
	and
	\[ \Gamma_2^{\pm}(Q):=\R^{n-1} \setminus \left\{ \pm CQ-s( x \mp CQ)|\, s\geq 0,\ x \in Ct^{\frac{1}{2}}B_{1}''(0)  \right\}.\]
	Let
	\[\Gamma_1(P)  = \Gamma_1^+(P)  \cap \Gamma_1^{-} (P) \text{ and } \Gamma_2(Q) = \Gamma_2^+(Q)  \cap \Gamma_2^{-} (Q) \]
	By considering rays starting from $\pm cP\in \P \tilde{S}_t$ passing through the ball  $ Ct^{\frac{1}{2}}B_{1}''(0)$, we notice that  $\P \tilde{S}_t \cap \left\{ \pm x_n \geq 0\right\}$ is contained in the cone $ \Gamma_1^{\pm}(P)$, hence $\P \tilde{S}_t \subset \Gamma_1(P)$.
	Similarly, by considering rays starting from  $\pm CQ\notin \P \tilde{S}_t$ passing through the ball  $ct^{\frac{1}{2}}B_{1}''(0)$,  we also have $	\P \tilde{S}_t \subset \Gamma_2(Q)$.
	 We now divide the proof into 3 steps. 
	
	\textbf{Step 1}.
	We prove  \eqref{eq:uscl lower} in the tangent direction. Let $\bar{t}  = t^{\frac{{\alpha_0}}{1+{\alpha_0}}}  $, where $t$ is very small such that $CK^{\frac{n-1}{2}}t^{\frac{{\alpha_0}}{2(1+{\alpha_0})}} \leq \frac{1}{2}$.  From \eqref{eq5.16 repsent point on e1},  we have
	\begin{equation}\label{eq5.17 section slide chara low}
		\P \tilde S_{\bar{t} }(0) \subset \Gamma_1(y_t) \subset \left\{ |x_1| \geq c(\bar{t} /K)^{\frac{1}{2}} (|x''|-CK^{\frac{1}{2}}\bar{t} ^{\frac{1}{2}})\right\}.
	\end{equation}
	Since $\bar{t}  \geq t$,  \eqref{eq5.15 d1 decay}  and \eqref{eq5.17 section slide chara low} imply
	\begin{align*}
		\P \tilde  S_{t}(0)
		& \subset  \P \tilde  S_{\bar{t} }(0)  \cap \P \tilde  S_{t}(0)  \\
		& \subset \left\{ |x_1|  \geq c(\bar{t} /K)^{\frac{1}{2}} (|x''|-CK^{\frac{1}{2}}\bar{t} ^{\frac{1}{2}})\right\}  \cap \left\{ |x_1| \lesssim K^{\frac{n-2}{2}} t^{\frac{{\alpha_0}}{1+{\alpha_0}}}\right\} \\
		& \subset \left\{|x''| \lesssim K^{\frac{1}{2}}\bar{t} ^{\frac{1}{2}} + K^{\frac{n-1}{2}}\bar{t} ^{-\frac{1}{2}}t^{\frac{{\alpha_0}}{1+{\alpha_0}}}\right\} 
		\cap \left\{ |x_1| \lesssim K^{\frac{n-2}{2}} t^{\frac{{\alpha_0}}{1+{\alpha_0}}}\right\} \\
		& \subset \left\{|x''| \lesssim K^{\frac{n-1}{2}}\bar{t} ^{\frac{1}{2}} \right\}
		\cap \left\{ |x_1| \lesssim K^{\frac{n-2}{2}} \bar{t} \right\}  \\
		& \subset \left\{|x'| \lesssim K^{\frac{n-1}{2}}\bar{t} ^{\frac{1}{2}} \right\} \\
		& \subset  CK^{\frac{n-1}{2}}\bar{t} ^{\frac{1}{2}} \P \tilde{S}_1=CK^{\frac{n-1}{2}}t^{\frac{{\alpha_0}}{2(1+{\alpha_0})}}\P \tilde{S}_1,
	\end{align*}
 the proof is complete.
	
	\textbf{Step 2}.
	 The proof of \eqref{eq:uscl lower} in the normal direction is the same as in Case 1, where we only used \eqref{eq:uscl lower} in the tangent direction.

	\textbf{Step 3}. Finally,  we prove the upper strict convexity lemma
	\eqref{eq:uscl  upper} in the tangent direction.   By iteration, \eqref{eq:uscl lower} implies 
	\[	\tilde{u}(x)\geq c|x|^{\frac{1+{\alpha_0}}{{\alpha_0}}} \text{ on } \tilde  S_{1}.\]
	Using similar discussions as in \eqref{eq5.14 normal slide expansion} and\eqref{eq5.15 d1 decay},  we have for any $t \geq \delta^2 $ that
	\[	d_n(th) \leq  ct^{\frac{{\alpha_0} }{1+{\alpha_0} }}d_n(h) ,\quad 
	 	d_1(th) \geq CK^{\frac{2-n}{2}}t^{\frac{1}{1+{\alpha_0} }}d_1(h).\]
	In particular, we can find $y_t\in \P \tilde S_{t}(0) $ such that
	\begin{equation}\label{eq5.18 d1 up bound slide}
		y_{t} \cdot e_1  \geq c \frac{d_1(th) }{d_1(h) } \geq cK^{\frac{2-n}{2}} t^{\frac{1}{1+{\alpha_0} }}.
	\end{equation}

	Letting
	$	b_1(t)=\sup\left\{ \mu|\, \mu e_1 \in \P \tilde S_{t}(0)  \right\}$, we assert that 
	\begin{equation}\label{eq5.19 d1 up bound claim}
		b_1(\delta_0) \geq  K\delta_0 .
	\end{equation}
This would imply $\tilde u(te_1)\leq K^{-1} t$  for $t \leq \delta_0$, and combining with \eqref{eq5.14 normal slide expansion},  we complete the proof.

	 Suppose \eqref{eq5.19 d1 up bound claim} is not true, 
	since $\frac{b(t)}{t}$  monotonically decreases and has a positive lower bound, we have     $ct \leq  b_1(t) \leq Kt $  for all $t \geq \delta_0$, implying
	\[	\P \tilde S_{t}(0)  \subset \Gamma_2(Ke_1) \subset \left\{ |x_1| \leq CK^{\frac{3}{2}} t^{\frac{1}{2}} |x''|+CKt\right\}. \]
	Recalling \eqref{eq5.17 section slide chara low}, we obtain
	\[	\P \tilde S_{t}(0) \subset \left\{  c  (t/K)^{\frac{1}{2}} |x''|-Ct
	\leq |x_1|\leq CK^{\frac{3}{2}} t^{\frac{1}{2}} |x''|+CKt\right\}.\]
	Let
	$N=CK^{\frac{3}{2}}$, $r =\frac{1}{2N^4}$,   $t=\delta^{\frac{3}{2}}  $ and $s=rt \geq \delta^2$. For any point   $y\in \P \tilde S_s \subset  \P \tilde  S_t$, we have
	\[ c  (t/K)^{\frac{1}{2}} |y''|-Ct
	\leq |y_1|\leq CK K^{\frac{1}{2}}s^{\frac{1}{2}} |y''|+CKs \leq Ns^{\frac{1}{2}} |y''|+CKs ,\]
	which implies
	\[ |y''| \leq CK\frac{t+s}{(t/K)^{\frac{1}{2}}-Ns^{\frac{1}{2}}} =CKK^{\frac{1}{2}}t^{\frac{1}{2}} \frac{1+r}{1-N(Kr)^{\frac{1}{2}}} \leq  Nt^{\frac{1}{2}} \frac{1+r}{1-N^2r^{\frac{1}{2}}},\]
	 and thus
	\[	|y_1| \leq Ns^{\frac{1}{2}}|y''|+CKs \leq N(rt)^{\frac{1}{2}}Nt^{\frac{1}{2}} \frac{1+r}{1-N^2r^{\frac{1}{2}}}+Nrt \leq Nt \left( r+Nr^{\frac{1}{2}}\frac{1+r}{1-N^2r^{\frac{1}{2}}}\right).\]
	Now taking  $y=y_s$  and revisiting  \eqref{eq5.18 d1 up bound slide}, we get
	\[	 cK^{\frac{2-n}{2}} (rt)^{\frac{1}{1+{\alpha_0} }}\leq   y_s \cdot e_1  \leq Nt \left( r+Nr^{\frac{1}{2}}\frac{1+r}{1-N^2r^{\frac{1}{2}}}\right).\]
	Therefore,
	\[\begin{split}
		\delta^{\frac{3}{2}} =t  
		&\gtrsim \left[NK^{\frac{n-2}{2}}   r^{-\frac{1}{1+{\alpha_0} }} \left( r+Nr^{\frac{1}{2}}\frac{1+r}{1-N^2r^{\frac{1}{2}}}\right)\right]^{-\frac{1+{\alpha_0} }{{\alpha_0} }} \\ & \gtrsim \left[NK^{\frac{n-2}{2}}   N^{\frac{4}{1+{\alpha_0} }} N^{-1} \right]^{-\frac{1+{\alpha_0} }{{\alpha_0} }}
		\\ &
		\gtrsim  \left[K^{\frac{n-2}{2}}   K^{\frac{6}{2(1+{\alpha_0} )}}  \right]^{-\frac{1+{\alpha_0} }{{\alpha_0} }} 
		\\ &
		\gtrsim \delta,
	\end{split} \]
	which is impossible. 
\end{proof}

\vspace{8pt}

\vspace{8pt}

\section{Viscosity Subsolutions and the Compactness of Oblique Boundary Values.}\label{sec:viscosity subsolution}
 Equation \eqref{eq4.16 slide normalization equation} has a well-behaved form and Theorem \ref{lem:uscl} provides compactness of solutions. However, $n \geq 3$, due to the lack of a uniform $C^{1}$ modules, we cannot guarantee the compactness of the corresponding oblique boundary values. Therefore, we employ the viscosity subsolution.
 
The assumptions in this section are independent of the assumptions in our main theorem. We always assume that $f$  is bounded and positive. We will use  $USC(E) $ (or $LSC(E)$) to denote the family of all upper (lower) semicontinuous functions on the set $E$. 

Recalling the oblique derivative problem
\begin{equation}\label{eq:oblique eq ch7}
	\det  D^2u  =f(x)      \text{ in } \Om  , \quad D_{\beta}u  = \phi(x)                 \text{ on } \pom.
\end{equation} 
 Caffarelli \cite{[G]} established the equivalence between the definition of a generalized solution and a viscosity solution of $\det D^2 u=f$ when $f$ is continuous 
 \footnote{Also, a generalized solution is a  viscosity subsolution (or supersolution) to the following equation
 	\[\det  D^2 u(y) \geq  \varliminf_{x\to y}f(x) ( \leq \varlimsup_{x\to y}f(x) )\].}.
    Assuming that  $\det D^2 u=f$, we say that $u \in USC(\bar \Omega)$ is a subsolution to $D_{\beta}u = \phi $ on $ \partial \Omega$, if for any $x_0\in \partial \Omega$ and any convex $v\in C^1(\bar \Omega)$ such that $u-v$ has a local maximum at  $x_0 $, we have
	\begin{equation*}
		D_{\beta}v(x_0)\geq \phi(x_0).
	\end{equation*}
We define supersolutions in a similar way. \footnote{Since a convex function on $\bar \Omega$ is always upper semicontinuous, supersolutions are actually continuous.}
	If $u\in C(\bar \Omega)$,  then the definition of viscosity subsolution to
$D_{\beta}u \geq \phi $ is weaker than the definition in terms of Dini derivatives. On the other hand, we have

%

\begin{Lemma}\label{lem3.7 viscosity subsolution lemma}
	Let $u \in  L^{\infty}(\bar  \Omega) \cap  USC (\bar  \Omega) $  be a subsolution of \eqref{eq:oblique eq ch7}. Assume that  $\beta \in Lip(\partial \Omega)$ and $\phi \in LSC(\partial \Omega)$.
	Then,  we have $D_{\beta} u\geq \phi $ on $\partial \Omega$ in the Dini sense and $u \in Lip(\bar \Omega)$.
	
\end{Lemma}
\begin{proof}
	We can assume by contradiction that  $D_{\beta} u(x_0)< \phi(x_0) $ at $x_0 \in \partial \Omega$. Without loss of generality, we let  
	 $x_0=0$, $\Omega$  satisfies \eqref{eq2.6 boundary prigin definition}-\eqref{eq2.8 boundary g definition}, $u(0)=0$,  and $\beta(0)=e_n$.	Let ${b}=\phi(0) \in \R$, and there exist small positive constants $\tau$ and $ \epsilon$ such that $D_n u(\tau e_n) \leq {b}-\epsilon$. Therefore, we have
	\begin{equation}\label{eq3.13 sub contrary}
		u(te_n)  \leq u(0)+({b}-\epsilon) t \text{ for }  t\in [0,\tau].
	\end{equation}
	
Let $\eta$ be the oblique constant,  $\kappa=\frac{1}{8(1+\left\|\beta\right\|_{Lip})}$ and $r= r(\epsilon , \tau, u ) \leq \min\left\{\epsilon, \eta_0\eta_1, \frac{1}{8}\kappa\eta_1^3\epsilon \tau\right\}$ sufficiently small such that,
	\[\phi(x)\geq {b}-\frac{\epsilon}{2} \text{ on } B_{2\eta^{-1}r}(0) \cap \partial\Omega ,\quad  u(x)\leq  \frac{1}{8}\kappa\eta^3\epsilon \tau \text{ in } B_{2\eta^{-1}r}(0) \cap \Omega ,\]
	and
	\[ u(x)\leq u( {\tau} e_n)+ \frac{1}{8}\kappa\eta^3\epsilon \tau \text{ in } B_{2r}( {\tau} e_n)  \cap \Omega.\]
	Consider the cylindrical domain
	\[\Gamma=( B_r'(0) \times [0,\tau]) \cap \Omega\]
	and the convex function
	\[ v (x)= \frac{ \kappa\eta \epsilon }{r^2}|x'|^2 +  (  {b} -\epsilon ) x_n .\]
	Let $G_1= \overline{\partial \Gamma \setminus \partial \Omega} $ and $G_2= \partial \Gamma \cap \partial \Omega$.  It is clear that  $ G_2 \subset ( B_r'(0) \times [0, \tau]) \cap \partial \Omega$ and $\partial G_1 \subset  B_{2\eta^{-1}r}(0) \setminus B_r(0)$.
	By calculation, we have
	\[  v > u\text{ on }  \partial G_1 \cup B_r'( {\tau} e_n).\]
	Note that $v$ is linear along the $e_n$ direction and $u$ is a convex function, so we have $v>u$ on  $G_1$.  For points $z  \in G_2$, we also have
	\begin{align*}
		D_{\beta(z)}v(z)  & \leq   {b}- \epsilon +\left( \frac{ 2\kappa\eta \epsilon}{ r} + 1 \right)  |\beta (z)-\beta (0)|  \\
		& \leq \phi (z) -\frac{1}{2}\epsilon +\left(\frac{ 2\kappa\eta \epsilon}{ r} + 1  \right) \left\|\beta\right\|_{Lip}r  <  \phi (z) = D_{\beta(z)}u(z).
	\end{align*}  
	In summary,
	\[	\begin{cases}
		\det  D^2v =0\leq   \det  D^2u      & \text{ in } \Gamma , \\
		u <  v                   &\text{ on }  G_1 , \\
		D_{\beta}v <  D_{\beta}u                     & \text{ on } G_2.
	\end{cases}\]
	By the comparison principle, we find that $u < v$ in $\overline{\Gamma}$,	which contradicts  $v(0)=0=u(0)$. Therefore, we have proved that $D_{\beta}u \geq \phi $ on $\partial \Omega$.
	
	Using the same discussion as in the proof of Lemma \ref{lem:lipschitz bound dC0}, we have
	\[ |\nabla u(x_0+t\beta(x_0))| \lesssim ( \omega_{u}(\Omega)   +\left\|\min\left\{\phi,0\right\}\right\|_{L^{\infty}(\partial \Omega)} ),\quad  \forall  x_0 \in \pom \text{ and } t>0 \text{ small }.\]
	Now, we consider the Lipschitz extension of $u$, $u^*(x) = \varlimsup_{ \Omega \ni y\to x}u(y)$.
 Since, $u\in USC (\bar \Omega)$ is convex, $u^* \leq u$ on $\bar \Omega$ and $u=u^*$ in $\Omega$. If $u^* < u$ at some boundary point $x_0$, say $x_0=0$, we still have \eqref{eq3.13 sub contrary}, and the same discussion shows that this is impossible. Therefore, we have  $u=u^*\in Lip(\bar \Omega)$.
\end{proof}

Let  $\Omega$  satisfies \eqref{eq2.6 boundary prigin definition}-\eqref{eq2.8 boundary g definition}, we consider the following mixed problem,
\begin{equation}\label{eq:mixed problem 2}
		\det  D^2 u  =f      \text{ in } \Omega,   \quad
	D_{n} u=\phi                 \text{ on } G:=\partial \Omega \cap B_c(0).
\end{equation} 
For our purposes, we make some additional assumptions to give the following existence and compactness results,  one of which is a qualitative strict convexity assumption on $u$, where for any fixed ${\alpha_0} >0 $, we assume that for  $\kappa_0= \kappa_0({\alpha_0})>0$ that
\begin{equation}\label{eq:visc com qlsc 1}
	c|x|^{\frac{1}{1+{\alpha_0}} } -\kappa_0 \leq u(x) \leq C|x|^{1+{\alpha_0} }+\kappa_0 .
\end{equation}

\begin{Lemma}\label{lem:visc exist mix}
	Suppose $u^-  \in USC(\bar{ \Omega}) $ is a subsolution and $u^+  \in LSC(\bar{ \Omega}) $ is a supersolution to \eqref{eq:mixed problem 1}, with  $u^- \leq u^+$ and both satisfying \eqref{eq:visc com qlsc 1}.  If $\phi= \phi(x') \in C(B_c'(0))$ is concave, then there exists a convex function $u \in Lip (\bar \Omega\cap B_C(0))$ such that 
	\[u^-\leq u\leq u^+ \text{ on } \bar \Omega\cap B_C(0)\]
	 and 
	\[\det  D^2 u  =f      \text{ in } \Omega,   \quad
	D_{n} u=\phi                 \text{ on } G:=\partial \Omega \cap B_{\rho}(0),\] 
	where $\rho>0$ is a small universal constant.
\end{Lemma}
\begin{proof}
	
	 Consider the non-empty set
	\[ V:=\left\{v \in USC(\bar \Omega) |\: v \text{ is a subsolution to problem  \eqref{eq:mixed problem 1}   and }  u^-\leq v \leq u^+ \right\}.\]
	Then the function
	\[ u(x)=\sup_{v\in V} v(x) \in Lip\left( \bar{ \Omega} \cap \bar B_c(0)\right)\]
	 is locally bounded and convex in $\Omega$, and the classical interior discussion shows that   $\det D^2u=f $ in $\Omega$, as seen in  \cite[Section 9]{[Cr]}. By Lemma \ref{lem:lipschitz bound dC0} and Lemma \ref{lem3.7 viscosity subsolution lemma}, functions in $V$ are uniformly Lipschitz on $ \bar{ \Omega} \cap B_c(0)$, and for any point  $x_0 \in \partial \Omega \cap  B_c(0) $,  we can find a sequence $\left\{v_k\right\}\subset V$ such that $v_k(x_0) \to u(x_0)$, then the supporting planes of $v_k $ at $x_0$ subconverge to the supporting plane of $u$ at $x_0$. Thus, we have  $D_{n} u \geq \phi $ on $ \partial \Omega \cap  B_c(0)$.
	
	It remains to be shown that  $D_n u\le \phi$  on $  B_{\rho}(0) \cap G$. We assume by contradiction that  $ D_n u(x_0) \geq  \phi (x_0)+3\epsilon$ for some  $x_0 \in  G \cap B_{\rho}(0)$ and   $\epsilon>0$.
	Let 
	\[E:=\left\{u(x)= u(x_0)+\nabla u(x_0) \cdot (x-x_0) \right\} \cap \bar \Omega.\] 
	From \eqref{eq:visc com qlsc 1},  we have    $E \subset \subset B_c(0) $. Since $\det D^2 u \approx 1$,  $E$ cannot have any interior extreme points in $\Omega$. Therefore,
	\[ \P E =\P \left(E\cap G\right), \text{ and } \partial E \cap \left(\partial (\P E) \times \R\right) = \partial E \cap G.\]
	Recall that $D_n u \in USC(\bar \Omega\cap B_c(0))$, $D_nu\geq \phi$ on $G$,  and $\phi$ is concave,
	Thus, we can choose $y  \in E \cap G$ such that $y'$ is an exposed point\footnote{An exposed point of a convex set $E$ is a point $x\in E$ at which some continuous linear functional attains its strict maximum over $E$, the set of exposed points is a non-empty subset of the set of extreme points.} of $\P E$ and satisfies
	$ D_n u(y) \geq  \phi (y)+3\epsilon$.
	Without loss of generality, we assume 
	\[y=0,\quad \nabla u(0)=0,\quad  \P E \subset \left\{x_1 \leq 0\right\} \text{ and } \P E \cap \left\{x_1 = 0\right\}=\left\{0\right\}. \]
	
	According to the definition of supersolutions, we have $u(0) < u^+(0)$, observing that $u^+-u \in LSC $, we can choose  positive $\tau$ and $r$, where $r<<\epsilon$, such that
	\[u^+ -u \geq \tau >0 \text{ in } B_r(0)\cap \Omega\]
	and
	\[ \phi (x)  \leq \phi(0)+\epsilon\leq -2\epsilon \text{ in } B_r(0)\cap\partial \Omega. \]
	
	Let $S_h(0): =S_{h,\nabla u(0)}^u(0)$, choose a small constant  $h >0$ (to be determined later), and let
	\[t_h:=\sup\left\{t|\, te_n \in S_h(0)  \right\} = \frac{h}{\varepsilon_h} \]
	where $\varepsilon_h \to 0$ as $h\to 0$. By Lemma \ref{lem:sec vol lbd}, we have $\left|S_h(0)\right|\leq Ch^{\frac{n}{2}}$. So Lemma \ref{lem2.5 balance lemma} implies that
	\[\operatorname{Vol}_{n-1} {\P S_h(0)} \lesssim \frac{\left|S_h(0)\right|}{t_h}  \lesssim \varepsilon_h h^{\frac{n-2}{2}}.\]
	By John's Lemma \ref{lem2.9 John's Lemma}, there exists point $x_h \in \P S_h(0)$ and an affine transformation $\T'$ on $\R^{n-1}$ such that
	\[  \P S_h(0) -x_h \subset \left\{ x'\in \R ^{n-1}|\, |\T' x'|^2 \leq c\right\} \text{ and } \operatorname{Vol}_{n-1}\left(\P S_h(0)\right)\det  \T' \approx 1.  \]
	Note that $u \in Lip(\bar \Omega) $  and $u(0)=0$ ensure that $	cB_h'(0) \subset \P S_h(0)$ and $h\left\|\T'\right\| \leq c$,
	and together with $0 \in \P S_h(0)$, we obtain
	$\P S_h(0) \subset \left\{x'\in \R ^{n-1}|\, |\T' x'|^2 \leq 1\right\}$ for small $h>0$.
	
	Let
	\[v=u+\epsilon x_n, \quad    E_h=(\left\{x'\in \R ^{n-1}|\, |\T' x'|^2 \leq 1\right\} \times [0,\epsilon^{-1}h]) \cap \bar \Omega,\]
	we have
	\begin{equation}\label{eq:visc domain Sv}
		\hat S_h^v(0):=\left\{x\in \Omega |\; v(x)<h\right\} \subset \left\{u \leq h\right\} \cap \left\{ x_n\leq \epsilon^{-1}h \right\} \subset  E_h.
	\end{equation}
	Let  $a_1(h) :=\sup\left\{t |\, \G (te_1 ) \in S_h(0) \right\}$ and consider the set
	\[ F_h =	\hat S_h^v(0) \cap \left\{ -16a_1(h) \leq  x_1 \leq a_1(h)\right\}.\]
		Since $\lim_{h \to 0 }\left(S_h(0) \cap \left\{ -16a_1(h) \leq  x_1 \leq a_1(h)\right\}\right) =E\cap \left\{x_n=0\right\}=\left\{0\right\}$ in the Hausdorff sense, and $\P E \subset \left\{x_1 \leq 0\right\}$, we have  $\lim_{h \to 0 } F_h \to \left\{0\right\}$.  We now choose $h>0$ small such that $F_h \subset B_r(0)$. Let
		\[  Q^h(x)= P^h(x) +\frac{h x_1}{8a_1(h)},\]
		where
	\begin{equation}\label{eq:visc function P}
		 P^h(x)=\frac{h}{4}+\frac{ h}{2n}\left[|\T' x'|^2+\left(\frac{x_n}{ \epsilon^{-1}h} \right)^2\right ]-\epsilon x_n .
	\end{equation}
Then,
	\[ Q^h(x) \leq P^h(x)+\frac{h}{8} <h = u \text{ on } \left\{\partial E_h\setminus G\right\} \cap \left\{ x_1 \geq -8a_1(h)\right\}\]
	and thus
	\[ Q^h(x) \leq P^h(x)-2h <-h < u \text{ on } \left\{\partial F_h\setminus G\right\} \cap \left\{ x_1 = -8a_1(h)\right\}.\]
	Noting that  $\det D^2 Q^h=\det D^2 P^h \geq C \epsilon^{2}|\det   \T'|^2h^{n-2}  \geq\frac{ C\epsilon^2 }{\varepsilon_h^2} >> 2\Lambda$, we obtain
	\[\begin{cases}
		\det D^2 Q^h  >2\Lambda &
		\text{ in } F_h  , \\
		D_{n}Q^h > \phi+\frac{\epsilon}{2}  &
		\text{ on } \partial F_h \cap G , \\
		Q^h(x) <u-\frac{h}{8}  &
		\text{ on } \partial F_h \setminus G .
	\end{cases}\]
	In addition, since the function $w=\max\left\{Q^h, u\right\} \chi_{F_h} +u\chi_{F_h^c} $ satisfies $w \leq u^+$, so we have $w \in V$. However, $w(0)=Q^h(0)> u(0)$, which contradicts the definition of $u$.

\end{proof}

 Similarly, we have compactness results for the following mixed problems.

\begin{Lemma}\label{lem:visc comp mix}
	Let $u_k  $, $k=1,2,\cdots$, satisfy
	\[	 \det  D^2u_k=f_k     \text{ in } \Om_k\cap B_c(0),  \
	D_{n}u_k  = \phi_k                    \text{ on } \pom_k \cap B_c(0), \]
	where $\Om_k=\left\{ x|\, x_n \geq g_k(x'), \ x'\in B'_c(0) \right\}$  satisfies \eqref{eq2.6 boundary prigin definition}-\eqref{eq2.8 boundary g definition}, $ \lambda \leq  f_k \leq \Lambda$, and $\phi_k \in L^{\infty}(B_c(0))$ converges uniformly to a concave function $ \phi \in C\left(B_c(0)\right)$. Suppose all $u_k$ satisfy \eqref{eq:visc com qlsc 1}. Then $\left(u_k,\Omega_k\right)$ subconverges to some  $(u,\Omega)$, and $u$ satisfies
	\[		
	\lambda \leq \det  D^2u   \leq \Lambda      \text{ in } \Om\cap B_{\rho}(0),  \
	D_{n}u  = \phi                    \text{ on } \pom\cap B_{\rho}(0).\]
\end{Lemma}

\begin{proof}
	By Theorem \ref{lem:lipschitz bound id},  $u_k$is locally uniformly Lipschitz, hence 
	$u_k$ subconverges to the subsolution  $u$ of some mixed problem
	\[  \det  D^2u =f ,\quad \lambda \leq f \leq \Lambda       \text{ in } \Omega,  \text{ and } D_\beta u \geq \phi   \text{ on } \partial \Omega. \]
	 The same proof as in Lemma \ref{lem:visc exist mix} can also verify that this is an upper solution.
\end{proof}

Furthermore, we state the following existence and compactness theorems, although they will not be used in the proof.
\begin{Theorem}\label{thm:exist theorem}
	Assuming that $f$  is bounded and positive,  and $\beta \in Lip ( \partial \Omega; \R^n)$  is oblique, and  $\phi(x,r) \in C(\bar \Omega \times R)$ with $D_r\phi(x,r)\geq b>0$ for some constant $b$. Suppose $n=2$ or $\Omega$ is a strictly convex domain, then the Robin problem 
	\begin{equation} \label{eq1.4 robin problem}
		\det  D^2u  =f(x)      \text{ in } \Om,  \quad
		D_{\beta}u  = \phi(x,u)                   \text{ on } \pom.
	\end{equation}
	has a solution $u \in Lip (\bar{ \Omega})$.
\end{Theorem}

\begin{proof}[\bf Proof of Theorem \ref{thm:exist theorem}]
	By choosing appropriate positive constants $K_1 $ and $K_2$, we have $ u^+(x) = K_1$ as a supersolution, and $ u^-(x) = -K_2+\Lambda|x-y|^2$  as a subsolution of problem\eqref{eq1.4 robin problem}. By the same proof as in Lemma \ref{lem:visc exist mix}, we only need to show that the function $u(x)=\sup_{v\in V} v(x) $ satisfies $D_{\beta}u  =  \phi(x,u)    u$, where 
	\[ V:=\left\{v \in USC(\bar \Omega) |\, v \text{ is a subsolution to problem \eqref{eq1.4 robin problem}, }  u^-\leq v \leq u^+\right\}.\]
	
	Without loss of generality,  we assume by contradiction that $0\in \partial \Omega$, $\Omega$ satisfies \eqref{eq2.6 boundary prigin definition}-\eqref{eq2.8 boundary g definition}, $\beta(0)=e_n$, $u(0)=0$, $\nabla u(0)=0$,  $ D_n u(0) \geq  \phi \left(0,0\right)+3\epsilon$. Then, we can proceed with the same discussion and notation as in the proof of the theorem, the only difference being that the proof is simpler in this case. Here, we only point out where the differences lie.   We choose $\hat{S}_h^v$ as in \eqref{eq:visc domain Sv} and  $P_h$ as in \eqref{eq:visc function P}.	If $n=2$,  then by Lemma Lemma \ref{lem:scon 2dires} and the fact  $v \geq u$, we have 
 	$ \hat{S}_h^v(0) \subset  S_h(0)  \to \left\{0\right\} $  as $h \to 0 $. If $\Omega$ is strict convex,  by \eqref{eq:visc domain Sv} we have
 	$\hat{S}_h^v(0) \subset  \left\{ x_n\leq \epsilon^{-1}h\right\} \to \left\{0\right\} $   as  $ h \to 0 $.
 	By writing $\T' x'=\sum_{i=1}^{n-1}a_i^{-1}x_ie_i$  in suitable orthogonal coordinates,  we obtain \[\left| D_{x'}\left(\frac{ h|\T' x'|^2 }{2n}\right)\right| \lesssim \sum_{i=1}^{n-1}ha_i^{-2}x_i \lesssim \sum_{i=1}^{n-1}ha_i^{-1} \lesssim 1 ,\]
 	and then
 	\[ D_{\beta}P^h \geq {\beta'}(x')\cdot D_{x'}\left(\frac{ h|\T' x'|^2 }{2n}\right)-C|\beta_n(x')-\beta_n(0)|-\epsilon > -2\epsilon  \text{ on }  \partial E_h \cap G. \] 
 	Therefore, we still have
		\[	\begin{cases}
			\det D^2 P^h  >2\Lambda &
			\text{ in } E_h  , \\
			D_{\beta}P^h > \phi  &
			\text{ on } \partial E_h \cap G , \\
			P^h(x) <u  &
			\text{ on } \partial E_h \setminus G .
		\end{cases}\]
	In addition, since the function  $\hat w=\max\left\{P^h, u\right\} \chi_{E_h} +u\chi_{E_h^c} $ satisfies $\hat w \leq u^+$, so we have $\hat w \in V$. However, $\hat w(0) =P^h(0)> u(0)$, which contradicts the definition of $u$. 
\end{proof}
\vspace{8pt}

Similarly, we state the following compactness results without proof.
\begin{Theorem}\label{lem3.13 compactness theorem}
		Let $u_k  $, $k=1,2,\cdots$, satisfy
		\[	 \det  D^2u_k=f_k     \text{ in } \Om_k,  \
	D_{\beta_k}u_k  = \phi_k                    \text{ on } \pom_k ,\]
	where $\Om_k=\left\{ x|\, x_n \geq g_k(x'), \ x'\in B'_c(0) \right\}$  satisfies \eqref{eq2.6 boundary prigin definition}-\eqref{eq2.8 boundary g definition}, $ \lambda \leq  f_k \leq \Lambda$, $\beta_k $ is uniformly oblique on $\partial \Omega_{k}$ for $k$,  and $\phi_k \in L^{\infty}(B_c(0))$ converges uniformly to a concave function $ \phi \in C\left(B_c(0)\right)$. Suppose $n=2$ or $\partial \Omega$ is strictly convex. Then, up to a constant, $u_k$ subconverges to a solution $u$ of 
		\[	 \det  D^2u=f     \text{ in } \Om,  \
	D_{\beta}u  = \phi                    \text{ on } \pom. \]
	
\end{Theorem}

%



%

\section{Stationary Lemma and \texorpdfstring{$C^{2,\alpha}$}{c2 alpha} Regularity}\label{sec:liouville and station}

%

 	Following Section \ref{sec:uscl}, by the virtue of \eqref{eq4.16 slide normalization equation}- \eqref{eq:normal phi c1a},  Theorem \ref{lem:uscl} and Lemma \ref{lem:visc comp mix} imply that the (Sliding) normalization of $u$ and the corresponding oblique derivative problem are pre-compact. And as $h \to 0$, we can ensure convergence in arbitrarily large domain.  Suppose $\hat u$ is one of the limits, and assume for simplicity that
 	\[\begin{cases}
 				\operatorname{det}D^2 \hat{u}=1     &  \text{ in } \hat{S}_1,\\
 				D_{n} \hat{u} =ax_1             &  \text{ on } \hat {G}_1, \\
 				\hat{u}=1                   &  \text{ on }  \partial \hat{S}_1 \setminus \hat{G}_1.
 			\end{cases}\]
 	
 
\begin{Lemma}\label{lem:c11 normal 2}
We have	$ \hat u(te_n) \lesssim  t^2.$
\end{Lemma}
\begin{proof}
	If $n=2$,  	If $\hat u \in C_{loc}^{1}( \hat \Omega\cup \hat G_1)$ and strictly convex in $ \hat \Omega$, which is always true when $n=2$, we have that $\hat u$ is smooth in $ \hat \Omega$. Therefore, the function  
	$\zeta=D_n \hat u-ax_1 $ is continuous in $\hat \Omega\cup \hat G_1$ and satisfies
	\[	\begin{cases}
		\hat U^{ij}D_{ij}  \zeta =0  &  \text{ in }  \hat S_1(0),  \\
		\zeta \lesssim 1     &           \text{ on }  \partial \hat S_1(0), \\
		\zeta = 0      &           \text{ on } \hat G_1(0),
	\end{cases}\]
	where $\hat U^{ij}$ is the cofactor matrix of $D^2\hat u$.  The Lemma is completed by using  
	\[w^+(x)=C_1\left[\hat u-\frac{n }{2} x_nD_n\hat u +C_2nx_n\right]\]
	 as an  upper barrier, where  $C_1$ and $ C_2$ are sufficiently large.

	 If $n\geq 3$, for any fixed $\epsilon>0$ small, we now approximate $\hat S_1(0)$ from the inside using a smooth convex domain $E$, and take smooth functions $f_{\epsilon}$ and $w_{\epsilon}$ such that $D_n f_{\epsilon}=0$ and
	\[\left\|f_{\epsilon}-1\right\|_{L^{1}(E)}+ \left\|w_{\epsilon}-\hat u\right\|_{L^{\infty}(\partial E)} \text{ is arbitrary small }.\]
	By solving the Dirichlet problem
	\[	\det  D^2u_{\epsilon}  =f_{\epsilon}     \text{ in } E,  \quad
	u_{\epsilon} =w_{\epsilon}                 \text{ on } \partial E,\]
	we can always find a convex function such that
	\[\left\|u_{\epsilon} -u\right\|_{L^{\infty}(E)} \leq \epsilon \]
	We further assume that $ S^\epsilon:=\hat S_{\frac{1}{2}}(0) +\epsilon^{\frac{1}{2}} e_n \subset E $ and let $G^\epsilon= \hat  G_{\frac{1}{2}}(0)+\epsilon^{\frac{1}{2}} e_n$.

	By convexity, we have $D_n u_{\epsilon} \lesssim 1$ in $S^\epsilon$.
	Note that there exists  $\sigma_{3}$  (depending on $\hat u$) with $\sigma_{3}(0)=0$ such that
	\[  0 \leq  D_n \hat u\left(\hat \G x'+ te_n\right) -D_n \hat u(\hat\G x') \leq \sigma_{3} (t).\]
	Therefore, for any point $q \in S^\epsilon$, we have by convexity that
	\[  D_n u_{\epsilon}(p) \leq  \frac{\hat u\left(p+\epsilon^{\frac{1}{2}}e_n\right)- \hat u(p)+2\epsilon}{\epsilon^{\frac{1}{2}}} \leq \ell(p')+\sigma_{3}(2\epsilon^{\frac{1}{2}})+\epsilon^{\frac{1}{2}} ,  \]
	and
	\[  D_n u_{\epsilon}(p) \geq  \frac{\hat u\left(p-\epsilon^{\frac{1}{2}}e_n\right)-\hat u(p)-2\epsilon}{\epsilon^{\frac{1}{2}}} \geq \ell(p')-\sigma_{3}(\epsilon^{\frac{1}{2}})-\epsilon^{\frac{1}{2}}. \]
	And Lemma \ref{lem:lip on modules} implies that locally $\left\| \nabla u_\epsilon\right\| \leq C$.
	Thus, $\zeta_\epsilon=D_nu_\epsilon-\ell(x) $ is a bounded solution of  
	\begin{equation}\label{eq:lthm linearized Dn 1}
		\begin{cases}
			U_\epsilon^{ij}D_{ij}  \zeta_{\epsilon}  =0  &  \text{ in }  S^\epsilon,  \\
			\zeta_\epsilon \leq C      &           \text{ on }  \partial S^\epsilon, \\
			\zeta_\epsilon \leq \sigma(2\epsilon^{\frac{1}{2}})+\epsilon^{\frac{1}{2}}       &           \text{ on } G^\epsilon.
		\end{cases}
	\end{equation}

	According to Theorem \ref{lem:uscl}, there exists a universal small constant  $\varrho >0$  such that  
\begin{equation}\label{eq6.1 quantative strictly convex def}
	\tilde S_{\frac{1}{2}}(0)\subset \frac{1}{8{\varrho}} \tilde S_{ {\varrho}}(0),
\end{equation}
holds for for each normalized solution $\tilde{u}$. This property remains invariant under uniform convergence. Therefore, it also holds for $\hat{u}$. Let 
\[\bar u_\epsilon(x)=u_\epsilon(x)-u_\epsilon(y_{\epsilon})-\nabla \hat u(y_{\epsilon}) \cdot (x-y_{\epsilon}),\]
where $y_{\epsilon}=\epsilon^{\frac{1}{2}} e_n$, we have
	\[\bar u_\epsilon \geq c >0    \text{ on } \partial S^\epsilon \setminus G^\epsilon.\]
	In fact, we notice that the function $\ell(x)=u_\epsilon(y_{\epsilon})+\nabla \hat u(y_{\epsilon}) \cdot (x-y_{\epsilon})-\varrho -2\epsilon$ satisfies
	\[ \ell(0) \geq -\varrho-c_1 \varepsilon^{\frac{1}{2}}\epsilon,\quad \ell  \leq 0 \text{ on } \hat S_{\varrho}(0).\]
Looking back at \eqref{eq6.1 quantative strictly convex def}, we find that
	\[  \ell+{\varrho} \leq \frac{1}{4} \text{ on } \partial \hat S_{\frac{1}{2}} \setminus \hat G_{\frac{1}{2}}.\]
	This means that $u-\ell-{\varrho} \geq \frac{1}{4}$ on $ \partial \hat S_{\frac{1}{2}} \setminus \hat G_{\frac{1}{2}}$, and hence
	\[\bar u_\epsilon= u_\epsilon-\ell-{\varrho}-2\epsilon \geq \frac{1}{8}\text{ on } \partial S^\epsilon \setminus G^\epsilon.\]
 
 The Lemma is completed by using 
		\[w_{\epsilon}^+:=C_1\left[\bar u_\epsilon-\frac{n }{2}(x_n-\epsilon^{\frac{1}{2}} )D_nu_\epsilon +nC(x_n-\epsilon^{\frac{1}{2}} )\right] +\sigma(2\epsilon^{\frac{1}{2}})+\epsilon^{\frac{1}{2}}  , \]
	as supersolution for the equation of $D_n u_{\epsilon}$ and then letting taking $\epsilon \to 0$.
%
\end{proof}

By Lemma \ref{lem:c11 normal 2}, there exists $\delta_1>0$ and module $\sigma^*$ such that when $\omega_{f}(\Omega)  +\omega_{D\phi}(\Omega) +h \leq \delta_1$, 
\begin{equation}\label{eq:c11 normal almost}
\tilde{u}(te_n) \leq Ct^2+\sigma^*(\delta_1) \text{ for } h \leq h_0.
\end{equation} 
This will ensure that the Neumann boundary $\tilde{G}$ converges locally uniformly to the plane  $\R^{n-1}$, and by verifying the assumptions in Liouville Theorem, the limit ${\hat u}$ must be quadratic, giving us

\begin{Lemma} \label{lem:sta c0 perturb}
	 For any small constant  
	 $\epsilon_0 >0$, we can find  constant $\delta_1 >0$ such that if	$ \omega_{f}(\Omega)  +\omega_{D \phi}(\Omega)  +h\leq \delta_1$, then there exists a quadratic function $P_h$ satisfying
	\[	P_h(0)=\nabla' P_h(0)=0,\quad \det D^2P_h=f(0),\]
	and
	\[	\left\|D_nP_h-D_n \tilde{u}\right\|_{L^{\infty}(\tilde{S}_1)}+\left\|\tilde{u}-P_h\right\|_{L^{\infty}(\tilde{S}_1)} \leq \epsilon_0.\]
\end{Lemma}
 \begin{proof}[\bf  Proof of Lemma \ref{lem:sta c0 perturb}]
 	Recalling \eqref{eq:c11 normal almost}, by iteration, this means that for any fixed $\epsilon>0$, we have
 	\[	d_n(h)\geq c_{\epsilon}h^{\frac{1+\epsilon}{2}},\]
 	 and thus we obtain $ \T_h' \leq C_{\epsilon}h^{\frac{1-\epsilon}{2}} \I'$.  By taking  $\epsilon = \frac{\alpha}{8}$, we find that
 	\[	|\tilde {g}_h(x')| \leq \frac{1}{d_n(h)}|\T_h'x'|^{1+\alpha} \lesssim h^{\frac{\alpha}{4}}|x'|^{\alpha} \to 0 ,\quad \text{ as }  h\to 0^+,\]
 	so the Neumann boundary $\tilde{G}_h$ converges locally uniformly to the plane  $\R^{n-1}$
 	
 	By recalling\eqref{eq:normal phi c1a} and \eqref{eq:qgc n-2 limit}, and letting $\delta_1 \to 0$ (so that $h\to 0$),	for any sequence of normalized  solutions, we can assume that one of the following cases holds for a subsequence: either $a_h \to 0$, thus $D_n{\tilde u} \to 0$; or $|a_h| \gtrsim K^{-1}$ for some constant $K>0$ depending on the sequence, and we obtain $\tilde u(0,x'',0) \lesssim K |x''|^2$. By applying  \ref{thm:liouville thm}, the limit ${\hat u}$ must be quadratic. 
 	
 	Note that the sections with height $1$ has already been normalized, so the limit functions are pre-compact. Therefore, for any $\varepsilon>0$, when $\delta_1$ is small, there exists a quadratic function $P_h$ satisfying
 	\[	P_h(0)=\nabla' P_h(0)=0,\quad \det D^2P_h=f(0),\]
 	and
 	\[	\left\|D_nP_h-D_n \tilde{u}\right\|_{L^{\infty}(\tilde G_1)}+\left\|\tilde{u}-P_h\right\|_{L^{\infty}(\tilde{S}_1)} \leq \varepsilon^2 .\]
 	By convexity, we have
 	\[D_n \tilde{u}\left(x',\tilde{g}(x')\right) \geq D_n P_h\left(x',\tilde{g}(x')\right) -\varepsilon^2\]
 	and 
 	\[D_n  P_h(x )-C\varepsilon \leq 	D_n \tilde{u}\left(x',\tilde{g}(x')\right)  \leq D_n  P_h(x )+C\varepsilon \text{ if } x_n \geq \tilde{g}(x')+\varepsilon,\]
 	which means that
 	\[\left\|D_nP_h-D_n \tilde{u}\right\|_{L^{\infty}(\tilde{S}_1)} \lesssim \varepsilon,\]
 	and then the proof is completed by choosing $\varepsilon =c\epsilon_0$.
 \end{proof}

Next, we use the standard perturbation method to prove that if $\tilde{u}_h$ is approximated by a quadratic function in $\tilde{S}_1^{\tilde{u}_h}$, and the known data $\tilde f$ and $\tilde \phi ^0$ are smaller perturbations of constant and linear functions, then $\tilde{u}_{\mu h}$ has  a better approximation in $\tilde{S}_1^{\tilde{u}_{\mu h}}$, where $\mu$ might be very small, ultimately giving Theorem  \ref{thm: c2alpha n=3}.

 Let $\F_{a}$ denote the set of quadratic functions given by $P(x):=Q(x)+\ell(x')$, where $\ell$ is a linear function and the quadratic term $Q(x)= \sum_{i, j=1}^na_{ij}x_ix_j$ satisfies  $D_n Q (x', 0)= ax_1$ and  $\det D^2 Q=1$.
Given $Q+\ell \in \F_a$,  let  $(\gamma_2,  \cdots , \gamma_{n-1})=(D_{1 2}Q, \cdots,  D_{1 n-1}Q)(D_{x''}^2Q)^{-1}$. By considering the sliding transformation
$\B x =x-2 \sum_{i=2}^{n-1} \gamma_ix_1e_i $ and then a rotation transformation in $x''$, 
we can always find a new coordinate system in which $Q$ takes the form of
\[	Q_{a,\kappa} (x):= \frac{1}{2}\left[\sqrt{\kappa^{n-2}+a^2}(x_1^2+x_n^2)+2ax_1x_n+\kappa^{-1}\sum_{i=2}^{n-1}x_i^2\right],\]
where $a\in R$ and $\kappa>0$. Clearly, $\operatorname{det}D^2 Q_{a,\kappa} (x) \equiv 1 $ and $D_n Q_{a,\kappa} (x', 0)= ax_1$.

In Theorem \ref{thm:stationary thm}, Lemma \ref{lem:stationary lem} and Lemma \ref{lem:station perturb 1} below, we will abuse the notation $u$, which refer to different functions in each lemma.
\begin{Theorem} \label{thm:stationary thm}
	Let  $u\in C(\overline{S_1})$ is be a solution to 
	\begin{equation} \label{eq:stationary model 1}
		\begin{cases}
			\operatorname{det}D^2 {u}=1     &  \text{ in } {S}_1,\\
			D_{n} {u} =a x_1               &  \text{ on }  {G}_1, \\
			{u}=1                   &  \text{ on }  \partial {S}_1 \setminus {G}_1.
		\end{cases}
	\end{equation}
	Suppose $	u(0)=0$, $\nabla u(0)=0$, $u\geq 0$, $|a| \leq C$,  $B_{c}^+(0)\cap \overline{{S}_1} \subset {S}_1 \subset B_{C}(0)$, and the defining function $g$ of $G_1$ satisfies $\left\|g\right\|_{Lip(B_{c}'(0))}  \leq C$. 
	Then there exists a universal constant $\epsilon_0>0$ such that if
	\[\left\|g\right\|_{C^{1,\alpha}(B_{C}'(0))} +	\left\|u-P\right\|_{L^{\infty}(S_1(0))}+  \left\|D_nu-D_nP\right\|_{L^{\infty}(S_1(0) )} \leq \epsilon_0 \]
	holds for some  $P(x)=\ell(x') +Q(x) \in \F_{a}$ with $B_{c/4}(0) \subset \left\{P \leq 1 \right\}$, then we have 
	\[\left\|u\right\|_{C^{2,\alpha}\left(\overline{B_{c}^+(0)\cap S_{1/2} } \right)} \leq C.\]
\end{Theorem}


Theorem \ref{thm:stationary thm} relies on the following lemma. 
\begin{Lemma} \label{lem:stationary lem}
		Let  $u\in C(\overline{S_1})$ is be a solution to  \eqref{eq:stationary model 1}. 
Suppose $P(x)=\ell(x') +Q(x) \in \F_{a}$ satisfies $B_{c/4}(0) \subset \left\{P \leq 1 \right\}$  and
	\begin{equation} \label{eq7.2 small coefficients 1}
		\left\|g\right\|_{C^{1,\alpha}(B_{C}'(0))}+\left\|u-P\right\|_{L^{\infty}\left(S_1  \right)} + \left\|D_nu-D_nP\right\|_{L^{\infty}\left(S_1 \right)}  \leq  \epsilon,
		\end{equation}
	then  
	\begin{equation} \label{eq:l estimates 1}
	|D\ell|\leq C\epsilon^{\frac{1}{2}}. 
	\end{equation}
	For any small constant $\mu>0$, there exists  $\epsilon_0>0 $, depending on $n$, $|a|$, $\mu$ such that if $\epsilon \leq \epsilon_0 $, then we can find a better approximation $P^0=l^0+ Q^0 \in \F_{a}$ of $u$ so that
	\begin{equation} \label{eq7.4 better approximation 1}
		|u-P^0| \leq C \epsilon  \mu^{\frac{3}{2}},\quad   |D_nu-D_nQ^0|\leq  C\epsilon  \mu \text{ in } S_{\mu}^{Q^0},
	\end{equation}
	and 
	\begin{equation} \label{eq7.5 better approximation Q}
	 	\left\| D^2 Q^0 - D^2 Q \right\| \leq 2\epsilon.
	\end{equation}
\end{Lemma}
The proof of Lemma \ref{lem:stationary lem} is the same as \cite[Lemma 6.2]{[JT]}, we put it in the Appendix \ref{app:proof b}.

\begin{proof}[\bf Proof of Theorem \ref{thm:stationary thm}]  
	 Let $h_k=\mu^k$ for $k=1,2,\cdots$. By using Lemma \ref{lem:stationary lem}, we will prove the following by induction on $k$.  For simplicity, we omit all subscripts $u_k$ for domains and boundaries. 
	 	 
	{\bf Claim.}\;
	For $k\geq 0$, there exists a sequence of constants $\kappa_k \in (c,C)$ and transformations
	\[\M_k=\operatorname{diag}\left\{ \M_k',\M_{k,nn}\right\}, \quad \det  {\M_k} =1,\]
	such that at height $h_k$,  the  normalization solution $(u_{k}, \Omega_{k} )$ of $(u, \Omega)$  (see Definition \ref{def4.6 sliding normalization}) given by
	\[	u_{k}(x):= \frac{u(\T_k h_k^{\frac{\alpha}{2}} x )}{h_{k}}, \text{ for }  x \in  \Omega_k:=  \T_k^{-1}  (\Omega) \text{ with } \T_k=\Pi_{i=0}^{k} \M_k\]
	satisfies
	\[	\epsilon_{k}:=\left\|g_k\right\|_{C^{1,\alpha}(B_{C}'(0))}+\left\|u_k-P_k\right\|_{L^{\infty}(S_1)} + \left\|D_nu_k-D_nQ_{a, \kappa_k}\right\|_{L^{\infty}(S_1)}  \leq  \epsilon_0\]
	for some $P_k=\ell_k+Q_{a, \kappa_k} \in \F_{a}$, where $g_k$ is the defining function of the oblique boundary of $u_k$, and 
	\[	Q_{a,\kappa} (x):= \frac{1}{2}\left[\sqrt{\kappa^{n-2}+a^2}(x_1^2+x_n^2)+2ax_1x_n+\kappa^{-1}\sum_{i=2}^{n-1}x_i^2\right] .\]
	Moreover,
	\[\left\|{\M}_k-\I\right\|+|\kappa_k-\kappa_{k-1}|\leq C\epsilon_{k-1}  \text{ and } |D \ell_k| \leq C\epsilon_{k-1}^{\frac{1}{2}}. \]
	
	The statement for $k=0$ is trivial by taking $\M=\I$, we also assume for simplicity that $\kappa_0=1$. Assuming the statement holds at $k$, we will prove the statement at $k+1$ by applying Theorem \ref{lem:stationary lem} to $u_k$. We will estimate $\left\|\T_k \right\|$ and $\epsilon_{k}$.  Assuming  $\epsilon_0 $ and $\mu$ are sufficiently small, once we prove the statement, we will have 
	\begin{equation}\label{eq7.14 estimate epsilon}
		\epsilon_{i} \leq h_{i}^{\frac{{\alpha}}{8}} \text{ for } i=0,1,\cdots,k
	\end{equation}
	Then we have $ \left\|\M_i -\I\right\|\leq 2\epsilon_{i} \leq  2 h_{i}^{\frac{{\alpha}}{8}}$ for $i=0,1,\cdots,k$, and
	\[ \left\|\T_{k}-\I\right\| \leq   \sum_{i=0}^{k}\left\|\M_i -\I\right\| \leq  4\sum_{i=0}^{k}h_{i}^{\frac{{\alpha}}{8}} \leq \frac{1}{2}  \text{ and then }  \frac{1}{2} \leq \kappa_{k} \leq 2.\] 
	We then apply Lemma \ref{lem:stationary lem} to $u_k$ and obtain a better approximation 	$\tilde{P}_{k+1}=\tilde{l} +\tilde Q  \in \F_{a}$. By \eqref{eq7.5 better approximation Q}, we can find  $\kappa_{k+1} >0$  and  positive definite  matrix $\M_{k+1} =\operatorname{diag}\left\{ \M_{k+1}',\M_{k+1,nn}\right\}$ such that
	\[D^2 \tilde Q  = \M_{k+1}D^2Q_{a,\kappa_{k+1}} \M_{k+1}^T  \]
	and
	\[\left|\kappa_{k+1}-\kappa_{k} \right|+ \left\|\M_{k+1}-\I \right\| \leq C{\epsilon}_k. \]
	 Let  $\ell_{k+1}=\tilde{l}\circ\M_{k+1}$. By calculation, we still have
	\[\left\|\T_{k+1} -\I\right\| \leq \frac{1}{2},\quad \frac{1}{2} \leq \kappa_{k+1} \leq 2\]
	and 
	\[ \left\|g_{k+1}\right\|_{C^{1,\alpha}(B_{C}'(0))} \leq Ch_k^{\frac{{\alpha}}{2}} \left\|\T_{k+1} \right\|^{n+\frac{{\alpha}}{2}} \leq Ch_k^{\frac{{\alpha}}{2}} \left\|\T_{k} \right\|^{n+\frac{{\alpha}}{2}}\left\|\M_{k+1} \right\|  \leq  h_{k+1}^{\frac{{\alpha}}{8}}. \] 
	Combined with \eqref{eq7.4 better approximation 1}, we obtain
	\[  \epsilon_{k+1} \leq  C \epsilon_{k} \mu^{\frac{1}{2}} +\left\|g_{k+1}\right\|_{C^{1,\alpha}(B_{C}'(0))}  \leq   h_{k+1}^{\frac{{\alpha}}{8}}.\]
	Therefore, we obtain \eqref{eq7.14 estimate epsilon} and the claim is complete. 
	
	Here, since we also obtained universal bounds for  
	 $\kappa_{k} $ and $\left\|\T_k\right\|$,  the same discussion allows us to refine the estimate  \eqref{eq7.14 estimate epsilon} to  $\epsilon_{k} \leq C_{\alpha}h_{k}^{\frac{{\alpha}}{2}}$.
	 Now, we can let $\kappa_{k}$ and $\T_k$ converge geometrically to $\kappa_{\infty} $ and $\T_{\infty}$, respectively, i.e., 
	\[ 	|\kappa_{k}-\kappa_{\infty}| \leq C_{\alpha}h_k^{\frac{{\alpha}}{2}}  \text{ and }  \left\|\T_{\infty}\T_{k}^{-1}-\I\right\| \leq C_{\alpha}h_k^{\frac{{\alpha}}{2}}.\]
	Replacing each $\T_{k}$ with $\T_{\infty}$, we see that 
	\[	|u(x ) -h_k^{\frac{1}{2}} \ell_k(\T_{\infty}^{-1} x)-Q_{a,\kappa_{\infty}}(\T_{\infty}^{-1}x )|  \leq C_{\alpha}h_k^{1+\frac{{\alpha}}{2}} \text{ in } B_{ch_k^{\frac{1}{2}}}(0).\]
	Recalling that $|D\ell_k|\leq C_{\alpha}\epsilon_{k-1}^{\frac{1}{2}}$, we conclude that
	\[|u(x ) -Q_{a,\kappa_{\infty}}(\T_{\infty}^{-1}x )| \leq  C_{\alpha}|x|^{2+\alpha}.\]
	Thus, we obtain the pointwise $C^{2,\alpha}$ module of $u$ at $0$.

	Note that the above discussion is valid for every point $x_0 \in G_{\frac{h_0}{2}}(0)$.
	Therefore, we find that  $u$ is $C^{2,\alpha}$ on  the boundary $ G_{\frac{h_0}{2}}(0)$.
	Assuming that  ${t} >0$  is sufficiently small, for any given point  ${z}\in \partial \Omega$ and point ${y} ={z}+ {t} \nu ( {z}) \in \Omega $ close to the origin. Let $Q_{z}$ denote the approximate quadratic polynomial of $u$ at ${z}$. 
	At this point, the section $S_{c_1{t}^2}^{Q_{z}}( {y}) $ is contained in $\Omega$ and satisfies
	\[  B_{c{t}^2}({y} )  \subset S_{c_1{t}^2}^{Q_{z}}( {y})  \subset B_{C{t}^2}({y} ) \]
	Note that
	\[\begin{cases}
		\det  D^{2}u (x )  = 1      & \text{ in } S_{c_1{t}^2}^{Q_{z}}( {y}), \\
		|u-Q_{z}(x ) |\leq C|{t}|^{2+\alpha}                  & \text{ on } \partial S_{c_1{t}^2}^{Q_{z}}( {y} ).
	\end{cases} \]
	 The classical interior $C^{2,\alpha}$  theory for Monge-Amp\`ere equations then gives $ \left\|u\right\|_{C^{2,\alpha}(B_{c{t}^2}({y} ))}\leq C$, hence $u\in C^{2,{\alpha} }\left({\Omega} \cap B_{\rho}(0)\right)$ for some small universal $\rho$.
\end{proof}

\begin{Lemma} \label{lem:station perturb 1}
	Let $\epsilon>0$, suppose   $u\in C(\overline{S_1})$ satisfies
	\[\begin{cases}
		\left|\operatorname{det}D^2 {u}-1\right| \leq  \delta\epsilon    &  \text{ in } {S}_1\\
		\left|D_{n} {u} -a x_1\right| \leq  \delta\epsilon              &  \text{ on }  {G}_1 \\
		{u}=1                   &  \text{ on }  \partial {S}_1 \setminus {G}_1
	\end{cases}\]
 Assume that    for some $P(x)=\ell(x') +Q(x) \in \F_{a}$ with $B_{c/4}(0) \subset \left\{P \leq 1 \right\}$,  we have
	\[ \left\|g\right\|_{C^{1,\alpha}(B_{C}'(0))}+\left\|u-P\right\|_{L^{\infty}\left(S_1  \right)} \leq  \epsilon,\]
	then  $	|D\ell|\leq C\epsilon^{\frac{1}{2}}$.
	For any small constant $\mu>0$, there exists small positive $\delta_0$ and $\epsilon_0$, depending on $n$, $|a|$, $\mu$ such that whenever $\epsilon \leq \epsilon_0 $ and $\delta \leq \delta_0 $, there is a better approximation $P^0=l^0+ Q^0 \in \F_{a}$ of $u$ such that
	\[	|u-P^0| \leq C \epsilon  \mu^{\frac{3}{2}} \text{ in } S_{\mu}^{Q^0} \text{ and } \left\| D^2 Q^0 - D^2 Q \right\| \leq 2\epsilon.\]
\end{Lemma}
\begin{proof}
	  By applying Lemma \ref{lem:visc exist mix}, we  obtain a solution $w $ of
	 \[	\begin{cases}
	 	\det D^2 w =1   & \text{ in }{S_1^u(0)}, \\
	 	D_{n} {w } =ax_1             &  \text{ on }{G_\frac{7}{8}^u(0)},  \\
	 	u^- \leq w  \leq u^+ & \text{ in }{S_1^u(0)},
	 \end{cases} \]
 In fact, for $C$ sufficiently large, we can take \[u^-:=(1+C\delta\epsilon)(u+C^2\delta\epsilon x_n)-2C^3 \delta\epsilon\]
 and   
 \[u^+:=(1-C\delta\epsilon)(u-C^2\delta\epsilon x_n)+2C^3 \delta\epsilon\]
 are the subsolution and supersolution, respectively.
 
	 We can see that 
	 \[ \left\|u-w  \right\|_{L^{\infty}} \leq C\delta\epsilon,\]
	 and then
	 \[\left\|g\right\|_{C^{1,\alpha}(B_{C}'(0))}+\left\|w-P\right\|_{L^{\infty}\left(S_1  \right)} \leq  2\epsilon.\]
	 According to Theorem \ref{thm:stationary thm}, $w$ is uniformly $C^{2,\alpha}$ up to the Neumann boundary. The $C^1$ estimate for the Neumann problem of the uniformly elliptic equation implies   
	 	\[	\left\|g\right\|_{C^{1,\alpha}(B_{c}'(0))}+\left\|w-P\right\|_{L^{\infty}\left(S_{3/4}  \right)} + \left\|D_nw-D_nP\right\|_{L^{\infty}\left(S_{3/4} \right)}  \leq  C\epsilon.\]
	  Thus, by applying Lemma \ref{lem:stationary lem}, we obtain a better approximation of $w$, and therefore, a better approximation of $u$, provided that $\delta \leq \delta_0$  is sufficiently small.  
\end{proof}

\begin{proof}[\bf Proof of Theorem \ref{thm: c2alpha n=3}] 

  By Lemma \ref{lem:sta c0 perturb},  we may assume that $x_0=0$ and start with a suitable normalization of $(\tilde{u},\tilde{S}_1)$ at $h=h_0$, where $h_0$ is small enough. Then, $\tilde{u}$ solves
\[	\begin{cases}
	\det D^2 \tilde{u}=\tilde{f}       & \text{ in }\tilde{S}_1 , \\
	D_{n} \tilde{u} =\tilde{\phi^0 }                &  \text{ on }\tilde{G}_1 , \\
	\tilde{u}=1                   &  \text{ on }\partial  \tilde{S}_1 \setminus \tilde{G}_1 ,
\end{cases}\]
where
\[	\tilde{f}(x) =    \frac{(\det  \D)^2}{h^n}f(\T_h x),\quad \tilde{\phi^0 }(x') =\frac{d_n\phi^0  ( \T'_h x')}{h}.\]	
Here, we may assume that $\tilde{u}(0)=0$, $\nabla \tilde{u}(0)=0$, $ \tilde{u}\geq 0$,    $B_{c}^+(0)\cap \overline{ \tilde{S}_1} \subset  \tilde{S}_1 \subset B_{C}(0)$  and let  the defining function $ \tilde{g}$ of $G_1$ satisfies $\left\| \tilde {g}\right\|_{Lip(B_{c}'(0))}  \leq C$. 
We also assume that 
\[  \tilde{f}_{h_0}(0) =1,\quad D  \tilde{\phi}_{h_0}(0) =ae_1,\quad |a| \leq C,\] 
and there exists $P(x)=\ell(x') +Q(x) \in \F_{a}$ with  $B_{c/4}(0) \subset \left\{P \leq 1 \right\}$ satisfying
\[ \left\|\tilde{u}-P\right\|_{L^{\infty}\left(\tilde{S}_1  \right)} \leq  \epsilon_0.\]

  The proof is then the same as in Theorem \ref{thm:stationary thm}. By applying Lemma \ref{lem:station perturb 1}, we can still use the induction argument to prove that $h_k={h_0}\mu^k$, $\epsilon_k:= \epsilon_0 \mu^{^{\frac{k\alpha}{8}}}$ (and then for $\epsilon_k:= C\epsilon_0 \mu^{^{\frac{k\alpha}{2}}}$) the following.

 {\bf Claim.}\;   
 For $k\geq 0$, there exists a sequence of constants $\kappa_k \in (c,C)$ and transformations
 \[\M_k=\operatorname{diag}\left\{ \M_k',\M_{k,nn}\right\}, \quad \det  {\M_k} =1,\]
 such that at height $h_k$,  the  normalization solution $(u_{k}, \Omega_{k} )$ of $(u, \Omega)$ given by
 \[	u_{k}(x):= \frac{u(\T_k h_k^{\frac{\alpha}{2}} x )}{h_{k}} \text{ for } \in  \Omega_k:=  \T_k^{-1}  (\Omega) \text{ with } \T_k=\T_{h_0}\Pi_{i=0}^{k} \M_k.\]
 satisfies
 	\[\begin{cases}
 	\left|\operatorname{det}D^2 {u_k}-1\right| \leq  \delta\epsilon_k    &  \text{ in } {S}_1\\
 	\left|D_{n} {u_k} -a x_1\right| \leq  \delta\epsilon_k              &  \text{ on }  {G}_1 \\
 	{u_k}=1                   &  \text{ on }  \partial {S}_1 \setminus {G}_1
 \end{cases}\]
And we have
 \[	 \left\|g_k\right\|_{C^{1,\alpha}(B_{C}'(0))}+\left\|u_k-P_k\right\|_{L^{\infty}(S_1)} + \left\|D_nu_k-D_nQ_{a, \kappa_k}\right\|_{L^{\infty}(S_1)} \leq \epsilon_{k} \leq  \epsilon_0\]
 for some $P_k=\ell_k+Q_{a, \kappa_k} \in \F_{a}$, where $g_k$ is the defining function of the oblique boundary of $u_k$, and 
 \[	Q_{a,\kappa} (x):= \frac{1}{2}\left[\sqrt{\kappa^{n-2}+a^2}(x_1^2+x_n^2)+2ax_1x_n+\kappa^{-1}\sum_{i=2}^{n-1}x_i^2\right].\]
 Moreover, 
 \[\left\|{\M}_k-\I\right\|+|\kappa_k-\kappa_{k-1}|\leq C\epsilon_{k-1}  \text{ and } |D \ell_k| \leq C\epsilon_{k-1}^{\frac{1}{2}}. \]


  And the same proof gives the $C^{2,\alpha}$ estimates of the solution as that of Theorem \ref{thm:stationary thm}. When $n=2$, the $C^{2,\alpha}$ regularity is holds at every boundary point, and for points far away from the boundary, the maximum height of their sections inside the domain has a positive lower bound, giving a global $C^{2,\alpha}$ estimate.
\end{proof}
%
%

%
%
%

\vspace{8 pt}

%
%
%

\vskip 0.5cm

\section{Examples}\label{sec:example}

In this section, we will give some examples of the oblique derivative problem by introducing convex functions for which $\det D^2 u \approx 1$ in $B_{\rho}(0)$,  to illustrate the obstacles for the boundary Schauder regularity in the oblique derivative problem for Monge-Amp\`ere equations when the dimension $n\geq 3$.

\begin{Example}\label{exam:pogorelov function}
	Consider classical Pogorelov's function
	\[ u(x)=(1+x_n^2)|x'|^{2-\frac{2}{n}}  \text{ in } B_{\rho}(0) , \quad n\geq 3. \]
	Here, $u$ is only  $C^{1,1-2/n} \cap  C^{\infty}(B_{\rho}(0) \setminus \left\{|x'|=0\right\})$, the oblique boundary value $D_{\nu}u$ is  $C^{1,1-2/n}$ on boundary $\partial B_{\rho}(0)$. Moreover, we can take a smooth  oblique vector field  $\beta $ such that $\beta( x',x_n)= ( -x'x_n,  ( 1-\frac{1}{n})(1+x_n^2))$ around the points $\left\{ \pm \rho e_n\right\}$ such that $u$ solves \eqref{eq:oblique eq 1} with both $\Omega:= B_{\rho}(0)$,   $f$, $\beta$, $\phi$ being smooth.
\end{Example}

\begin{Example}\label{exam:Mooney singular}
Consider the merely Lipschitz function appearing in Caffarelli \cite{[C35]}
\[u\left(x \right)=\left|x^{\prime}\right|+\left(1+x_{n}^{2}\right) \left|x^{\prime}\right|^{n / 2} \text{ in } B_{\rho}(0) ,\quad n\geq 3. \]
Here, the Neumann boundary value $D_{\nu}u$ is Lipschitz on boundary $\partial B_{\rho}(0)$, and $\det D^2u$ is as  smooth as $ \left(1+\frac{n}{2} \left|x^{\prime}\right|^{n / 2-1} (1+x_n^2)\right)^{n-2}$. And we can always take an appropriate smooth oblique vector field  $\beta $ such that  $\phi= D_{\beta}u $ is as  smooth as $\left|x^{\prime}\right|^{n / 2}$.
\end{Example}

Finally, we recall a singular homogeneous function from our recent paper \cite[Example 4.3 and Remark 4.4]{[JT]}.	Suppose $n \geq 3$, for any small $\delta >0$ and  $a,b\in (1,\infty)  $ satisfying 
\[\frac{1}{a}=\frac{1}{2}+\frac{\delta}{n-2} \text{
	and } b= \frac{1}{1-\delta},\]
we let
\[	W_{a,b}(x_1,x'')=\begin{cases}  |x''|^{a}+|x''|^{a-\frac{2a}{b}}|x_1|^2,    & \text{ if } |x''|^a \geq |x_1|^b ,  \\
	\frac{2b+a-ab}{b}|x_1|^{b}+\frac{ab-a}{b}|x_1|^{b-\frac{2b}{a}}|x''|^2,& \text{ if }  |x_1|^b \geq |x''|^a  ,
\end{cases}\]
and	
\[	W(x_1,x'',x_n)= (1+x_n^2)W_{a,b}(x').\]

Let $\rho=\rho(\delta) >0$ be small. Then,  the function $ W $ is convex in $\left\{|x_n| \leq \rho \right\}$ and satisfies
\[	c(\delta)\leq   \det  D^2W \leq C(\delta), \quad D_nW(x',0)=0.\]
Moreover, given a constant $R>0$,  the solution $W_R^+$  to the Dirichlet problem
\[	\begin{cases}
	\det  D^2 W^+ =c(\delta)  & \text{ in } B_{R}'(0) \times [-\rho, \rho], \\
	W^+ =W  & \text{ on } \partial (B_{R}'(0) \times [-\rho, \rho])
\end{cases} \]
satisfies
\begin{equation}\label{eq8.1 compare W}
	W(x',0) \leq W^+(x',t) \leq  (1+\rho^2)W(x',0)
\end{equation}
Additionally, we will verify this in Appendix \ref{app:proof exam} the following Example \ref{exam:c11}, which shows that the assumption  \eqref{eq:qg ch1} in Theorem \ref{thm: c2alpha n=3} is optimal in some sense for local problems.

\begin{Example}\label{exam:c11}
	The function   $u(y_1,y'',y_n) =W^+(y_n,y'',y_1)$ satisfies $u \in C_{loc}^{1,a-1}(B_{\frac{\rho}{2}}'(0))$, and  is a solution to
	\begin{equation}\label{eq8.2 local problem example}
			\det  D^2 u =c \text{ in } B_{\frac{\rho}{2}}^+(0), \
		D_n u =0  \text{ on } B_{\frac{\rho}{2}}'(0).
	\end{equation}
But $u\notin C^{1,\delta+\epsilon}$ for any $\epsilon >0$.
\end{Example}


\appendix  

\section{Proof of Lemma \ref{lem:stationary lem}}\label{app:proof b}

\begin{proof}
	The estimate of $|D\ell|$ is obtained based on convexity, which we omit here. Contrary to our statement \eqref{eq7.4 better approximation 1}, there exists a sequence of solutions $u_k$ to 
	\[	\begin{cases}
		\operatorname{det}D^2 {u_k}=1     &  \text{ in } {S}_1^{u_k},\\
		D_{n} {u_k} =a_k x_1               &  \text{ on }  {G}_1^{u_k} ,\\
		{u_k}=1                   &  \text{ on }  \partial {S}_1^{u_k} \setminus {G}_1^{u_k},
	\end{cases}\] 
with good approximations  $P_k=\ell_k+Q_k $ satisfying
	\[	\epsilon_{k}:=\left\|g_k\right\|_{C^{1,\alpha}(B_{C}'(0))}+\left\|u_k-P_k\right\|_{L^{\infty}(S_1^{u_k})}+ \left\|D_nu_k-D_nQ_k\right\|_{L^{\infty}(S_1^{u_k})}   \to 0 ,\]
	 but \eqref{eq7.4 better approximation 1} fails for $u_k$ and all polynomials in $\F_{a_k}$.

	According to the assumption $B_{c/4}(0) \subset \left\{P_k \leq 1 \right\}$, we always have $c\I \leq D^2 Q_k \leq C\I$. By considering a subsequence and using affine transformations, without loss of generality, we may assume that for all $k$, 
	\[	Q_{k} (x):= \frac{1}{2}\left[\sqrt{\kappa_{k}^{n-2}+{a_k}^2}\left(x_1^2+x_n^2\right)+2 {a_k}x_1x_n+ \kappa_{k}^{-1}\sum_{i=2}^{n-1}x_i^2\right].\]
		We assume that $\lim_{k\to\infty} \kappa_k=1$ and $\lim_{k\to\infty} a_k=a$, and let
	\[	Q_{a} (x):= \frac{1}{2}\left[\sqrt{1+a^2}\left(x_1^2+x_n^2\right)+2 ax_1x_n+ \sum_{i=2}^{n-1}x_i^2\right].\]
	 In the following discussion, we omit all subscripts $u_k$ for the domains and boundaries, and we consider the functions 
	\[ w_k = \frac{u_k-\ell_k-Q_{k}}{\epsilon_{k}}. \]

	\textbf{Step 1}. As $k \to \infty$, $w_k$ converges locally uniformly to a function $w$ satisfying
	\[	\L[w]:= \sqrt{1+a^2}(w_{11}+w_{nn})+ a w_{1n} + \sum_{i=2}^{n-1} w_{ii}=0 \text{ in }  B_c^+(0).\]
	
	We consider convergence on the compact subset
	\[E=\left\{ x\in \R^{n}|\, x_n \geq C\epsilon_k^{\frac{1}{2}} \right\} \cap  S_{1}(0) \cap S_{1/2}^{Q_k}(0).\]
	For every point $y \in E$ satisfying $\operatorname{dist}(y , \partial G_1) >> C\epsilon_k^{\frac{1}{2}}$, we choose a constant $\rho=c\operatorname{dist}(y , \partial G_1)$
	such that the ball $B_{C\rho}(y ) \subset E $. Then we have
	\[B_{c\rho}(y) \subset  S_{\rho^2}^{Q_k}(y) \subset B_{C\rho}(y ).\]
	Recall that  $ |{{u}_k}-Q|\leq  \epsilon_k$. Since $\epsilon_k \leq c\rho^2$, we have $B_{2c\rho}(y ) \subset S_{\rho^2/2}^{{u}_k}(y) \subset B_{C\rho}(y )$. The standard estimate of the Monge-Amp\`ere equation implies that 
	\[   \left\|{{u}_k}\right\|_{C^{m}(B_{c\rho}(y))}  \leq C_m  \rho^{2-m}  \text{ and } c\I \leq  D^2{{u}_k}\leq C\I \text{ in } B_{c\rho}(y) \text{ for } m\geq 0.\]
	Note that
	\begin{equation} \label{eq7.6 diff 8}
		0 =  \det  D^{2}{u}_k-\det  D^{2} Q_k=Tr  ( AD^2({{u}_k}-Q_k ))= \epsilon_k Tr  ( AD^2 w_k),
	\end{equation}
	where
	\[  A:=[A_{ij}]_{n\times n} =\int_0^1 cof ( ( 1-t )D^2Q_k+tD^2{{u}_k} ) dt \]
	and $cof \M$ denotes the cofactor matrix of a matrix $\M$. The operator $\L_A[\cdot]:=A_{ij}D_{ij}(\cdot)$ is uniform elliptic with smooth coefficients. From \eqref{eq7.6 diff 8}, we obtain
	\begin{equation} \label{eq7.7 nter esti 8}
		\left\|{w}_k\right\|_{C^{m}(B_{c\rho}(y))}\leq C\rho^{-m},
	\end{equation}
	which implies
	\[		| D^{2}{{u}_k}- D^{2} Q_k| \leq C\epsilon_k \rho^{-2} \text{ and } | A_{ij}- D_{ij} Q_k|\leq C\epsilon_k \rho^{-2}.\]
	Letting $ \epsilon_k \to 0$,  we see that $({w}_k,D_n {w}_k)$ converges locally uniformly in $B_{c\rho}(y)$ to $(w,D_nw)$ with $\L w=0$.

	\textbf{Step 2}. By giving an uniform control of $(w_k,D_nw_k)$ near $\left\{ x_n =0\right\}$, we have that
	\begin{equation}\label{eq7.8 linear limit station}
		\L w= 0 \text{ in }  B_c^+(0),\quad w(0)=0 ,\quad  D_nw= 0\text{ on } \left\{ x_n=0\right\}.
	\end{equation}
	
	We claim that
	\begin{equation} \label{eq7.9 linear limit station}
		|D_n w_k (x)| \lesssim x_n+\epsilon_k \text{ in }  S_{\frac{c}{2}}(0),
	\end{equation}
	and
	\begin{equation} \label{eq7.10 continuous universal esti 8}
		\omega_{w_k}(B_r(x))  \lesssim \max\left\{r^{\frac{2}{3}},\epsilon_k^{\frac{1}{2}}\right\} \text{ for } x \in G_{\frac{c}{2}}.
	\end{equation}
	
	Fixing  $k$, since we can construct a smooth approximation of $u_k$ using the solution of Dirichlet problems, as we did in Lemma \ref{lem:c11 normal 2}, without breaking the structure of the problem, we may assume that $w_k $ is smooth.Then, ${\zeta} =D_n w_k$ satisfies
	\[\begin{cases}
		U_k^{ij} D_{ij}{\zeta}=0        & \text{ in } S_1(0), \\
		|{\zeta}|\leq C & \text{ in } S_1(0),  \\
		|{\zeta}|=0   & \text{ on }  {G_1(0)}.
	\end{cases}\]
	where $U_k^{ij}$ denote the cofactor matrix of $D^2 u_k$. Let $C_1$ and $C_2 $ be sufficiently large, the function
	\[\upsilon(x) =C_1\left[ u_k(x)-\frac{n}{2}x_nD_n u_k+C_2\left(x_n+  \epsilon_{k}\right)  \right] .\]
	satisfies
	\[\begin{cases}
		U_k^{ij} D_{ij}\upsilon =0        & \text{ in } S_1(0), \\
		\upsilon\geq C    & \text{ on } \partial S_1(0) \setminus G_1(0)  , \\
		\upsilon  \geq 0             & \text{ on }  {G_1(0)}.
	\end{cases}\]
	Hence, $   \upsilon$ and $- \upsilon$  are the upper and lower barriers of ${w}_k$ at $0$, respectively. Therefore,
	\[ |D_n {w_k}(te_n)| \lesssim t+\epsilon_{k}.\]
	By considering the function $ u_{k,x_0}= [ u_k (x)  - u_k (x_0)- \nabla  u_k (x_0) \cdot (x-x_0)  ]$ and transformation $\A x=(x-x_0)-D'g_k(x_0) \cdot x' e_n$, a similar discussion applies to every point on $G_{\frac{c}{2}}$. Therefore, we have
	\[	|D_n {w_k}(x)|\lesssim |\A x \cdot e_n|+\epsilon_{k} \lesssim x_n+\epsilon_{k}, \]
	and \eqref{eq7.9 linear limit station} is proved.
	
	From \eqref{eq7.9 linear limit station} that, we conclude that
	\[	|w_k(x', g_k(x'))-w_k(x', x_n)| \lesssim x_n^2+\epsilon_{k} x_n.\]
	To prove \eqref{eq7.10 continuous universal esti 8}, we estimate $| w_k (p)- w_k(q)|$ for points
	$p=\left(y_1,g_k(y_1)\right)$ and $q=\left(q_1,g_k(q_1)\right)$ on $G_{\frac{c}{2}}(0)$.  Let \[r=|p'-q'|  \text{ and } \rho=C\max\left\{r^{\frac{1}{3}},\epsilon_k^{\frac{1}{2}}\right\}\]
	and take points $y=(p',\rho )$ and $z=(q',\rho )$. Since $\rho \geq C\epsilon_k^{\frac{1}{2}}$, \eqref{eq7.7 nter esti 8} implies that
	\[ |{w_k}(y)-{w_k}(z)| \lesssim r\rho^{-1},\]
	and then
	\[ |{w_k}(p)-{w_k}(q)| \lesssim \rho^2 +\epsilon_{k} \rho +r\rho^{-1} \lesssim \max\left\{r^{\frac{2}{3}},   \epsilon_k^{\frac{1}{2}}\right\}.\]
	This gives \eqref{eq7.10 continuous universal esti 8}.
	
	\textbf{Step 3}. Now, $w$ is a continuous viscosity solution of  \eqref{eq7.8 linear limit station}, from \eqref{eq7.8 linear limit station} and the standard theory for  elliptic partial differential equations, we have  $w \in C_{loc}^3  $. Note that $w(0)=D_nw(0)=D'D_{n}w(0)=0$, There exists a quadratic function
	\[R(x)= \sum_{1\leq i,j \leq n-1} a_{ij}x_ix_j+\frac{a_{nn}}{2}x_n^2+\sum_{i=1}^{n-1} b_ix_i,\]
	such that
	\[	|w(x) - R(x)|\lesssim |x|^3  \text{ and }	 |D_n w(x) -a_{nn}x_n | \lesssim |x|^2,\]
	for which $ a_{ij},a_{nn},b_{i}$ are bounded and satisfy  $\sqrt{1+a^2}(a_{11}+a_{nn})+ \sum_{i=2}^{n-1}a_{ii}= 0$.
	As $\epsilon_k\to 0$,  recalling that $w$ is the limit of $w_k$ in   Step 2, we have
	\[	|{w_k}(x) - R(x)| \leq \sigma+ C|x|^3  \text{ and }	 |D_n {w_k}(x) -a_{nn}x_n | \leq \sigma+ C|x|^2,\]
	where $\sigma =\sigma( \epsilon_k ) \to 0  $. Suppose 
	\[\left|a_k-a\right|+ \left|\kappa_k-1\right|+\sigma^{\frac{2}{3}} +\epsilon_k ^{\frac{1}{3}} \leq \mu,\]
	  then
	\begin{equation} \label{eq7.11 step 3 1}
		\left|u_k- \left(Q_k+\epsilon_k  R\right)\right|   \leq C \epsilon_k\mu^{\frac{3}{2}} \text{ in }  S_{\mu}^{Q_k},
	\end{equation}
	and
	\begin{equation} \label{eq7.12 step 3 2}
		\left|D_n u_k -\left(D_nQ_k+ \epsilon_kD_nR\right)\right|  \leq C \epsilon_k\mu \text{ in }  S_{\mu}^{Q_k}.
	\end{equation}
	
	Fixed $k$, observing that  $\det D^2 u_k  =\det \left(D^2 Q_k+ \epsilon_kD^2 w_k\right)=1$,  we see that the function $k(t)=\det \left(D^{2} Q_k+\epsilon_kD^2R+t \I\right)$ satisfies $|k(0)-1| \leq  \epsilon_k\left( \left|a_k-a\right|+ \left|\kappa_k-1\right|\right)+O(\epsilon_k^2)\lesssim\epsilon_k \mu $ and $ k'(t) \approx 1$. The equation $k(t)=1$ has a solution $t_0$ satisfying
	\begin{equation}\label{eq7.13 step 3 3}
		|t_0| \lesssim \epsilon_k \mu.
	\end{equation}
	Take
	\[	\P^0 (x)= Q_k(x)+ \epsilon_kR(x) +\frac{t_0}{2}|x|^2,\]
	we have $\det  D^{2}P^0=1,\quad D_nP^0= ax_1$ on $ \left\{ x_n=0\right\}$. Thus, $P^0 \in \F_a$.
	Combining \eqref{eq7.11 step 3 1},  \eqref{eq7.12 step 3 2} and  \eqref{eq7.13 step 3 3}, we obtain
	\[ |{u_k }(x)-P^0(x)| \leq  Ct_0 \mu +C \epsilon_k\mu^{\frac{3}{2}} \leq   C \epsilon_k\mu^{\frac{3}{2}}  \text{ in }  S_{\mu}, \]
	which together with the result from Step 1 implies
	\[ |u_k(x)-P^0(x)| \leq    C \epsilon_k\mu^{\frac{3}{2}} +C\delta \epsilon\leq C \epsilon_k\mu^{\frac{3}{2}}  \text{ in }  S_{\mu}.\]
	This contradicts the definition of $u_k$ if we replace $\mu$ with $\mu/2$ and note that  $S_{\mu/2}(0) \cup S_{\mu/2}^{P_0}(0) \subset S_{\mu}$. Therefore, we have proved \eqref{eq7.4 better approximation 1}, and \eqref{eq7.5 better approximation Q} follows from calculation.
\end{proof}

\section{Verification of Example \ref{exam:c11}}\label{app:proof exam}

Similar to Lemma \ref{lem:lip on modules}, we can prove
\begin{Lemma}\label{lem2.8 lip bounded lemma 2}
	Write $x\in \R^n$ as $x=(x^1,\cdots,x^k)$, $x^j \in \R^{a_j}$, $\sum_{i=1}^{k} a_i=n$.
	Let $u\in C( B_r(0) )$ be a convex function, satisfying
	\[u(x) \leq u(0)+ \sum_{i=1}^{k} \sigma_i(|x^i|) \text{ in }\; B_r(0).\]
	Then
	\[		|\nabla_{x^j} u(0)| \leq C \inf_{ 0 \leq t \leq cr}\frac{2\sigma_i(Ct)}{t},\quad  i=1,2,\cdots, k .\]
	In particular, locally bounded convex functions are locally Lipschitz.
\end{Lemma}

Before verifying Example \ref{exam:c11}, we  introduce some properties of the singular function $W_{a,b}(x')$ on the plane $\R^{n-1}$. Denote
\[ v(x') =W_{a,b}(x') ,\quad \breve v(x')= (1+\rho^2)W_{a,b}(x').\]
Given $t>0$, consider the sections
\[ F_t =S_t^{ v}(0) ,  \breve F_t =S_t^{\breve v}(0)\]
and the diagonal transformation
\[ \D_t =\operatorname{diag}\left\{ t^{1/b}, t^{1/a}\I'' \right\}.\]

For simplicity, we only consider the case $t=1$ in the following lemmas since  $v$ is a homogeneous function and these lemmas are invariant under the normalization
\[ \tilde{v}(x):= \frac{v(\D_tx)}{t} =v(x).\]

\begin{Lemma}\label{l10.5}
	Given a linear function $ L(x')$ satisfying $L(x') \leq \breve v$.  Denote the section
	$S_L=\left\{ x'|\,  v\leq L(x') \right\}$.
	There exists universal constants $c,C$ such that if $S_L \cap \partial F_t \neq \emptyset $, then
	$ S_L \subset  \breve F_{Ct} \setminus  F_{ct}$.
\end{Lemma}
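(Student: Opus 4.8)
\textbf{Plan of proof for Lemma \ref{l10.5}.}
The statement concerns the homogeneous function $v=W_{a,b}$ on $\R^{n-1}$, its super-level sets $F_t=S_t^v(0)$, the inflated sets $\breve F_t=S_t^{\breve v}(0)$ with $\breve v=(1+\rho^2)v$, and the slice $S_L=\{v\le L\}$ cut off by a supporting-type linear function $L\le \breve v$ that meets $\partial F_1$ (after normalizing $t=1$, which is permissible by homogeneity as noted). The plan is to show $S_L$ is sandwiched between two dilates of $F_1$: $F_{c}\subset$ complement-side and $S_L\subset \breve F_C$.

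First I would record the basic geometry of $F_1=\{v\le 1\}$. Since $v$ is convex, positive and $(\D_t)$-homogeneous in the sense $v(\D_t x')=t\,v(x')$, the set $F_1$ is a bounded convex neighborhood of $0$, and $F_t=\D_t F_1$. From the explicit formula \eqref{singular calcu 1} one reads off two-sided bounds $c(|x_1|^b+|x''|^a)\le v(x')\le C(|x_1|^b+|x''|^a)$ on $F_1$ (splitting into the two regions $E_1,E_2$ and using that in $E_1$ one has $|x_1|^b\le |x''|^a$, so $|x''|^{a-2a/b}|x_1|^2\le |x''|^a$, and symmetrically in $E_2$); hence $F_1$ contains and is contained in fixed dilates of the ``box'' $\{|x_1|^b+|x''|^a\le 1\}$. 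This gives the comparability $\breve F_1\subset C F_1$ and $c F_1\subset \breve F_1$ with universal constants, which already controls the outer inclusion once we control $S_L$.

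Next, the key step: $L$ touches $\partial F_1$ at some point $p'$ with $v(p')=1$, and $L\le \breve v=(1+\rho^2)v$ everywhere. Because $L$ is affine and $v$ is convex with $v(0)=0$, the slice $S_L=\{v\le L\}$ is convex, contains $p'$ on its boundary, and $0\in S_L$ iff $L(0)\ge 0$; in any case $L(0)\le \breve v(0)=0$ is impossible unless $L(0)\le0$, so in fact $L(0)\le 0$ and one gets $S_L\subset\{v\le \breve v\}$-type control only through the touching condition. The estimate I would run: for $x'\in S_L$, $v(x')\le L(x')\le (1+\rho^2)v(x')$ is vacuous, so instead I use that an affine function bounded above by $(1+\rho^2)v$ and equal to $1$ at a point where $v=1$ must satisfy $L(x')\le C$ on the whole ``slab'' between the supporting hyperplane at $p'$ and its reflection — more precisely, writing $L(x')=1+\nabla L\cdot(x'-p')$ and using the supporting inequality $v(x')\ge v(p')+\xi\cdot(x'-p')$ for $\xi\in\partial v(p')$ together with $L\le(1+\rho^2)v$, one derives $\nabla L\cdot(x'-p')\le C$ throughout $S_L$, hence $S_L\subset\{L\le C\}\subset\{v\le C\}=F_C\subset \breve F_{C'}$. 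For the inner inclusion $F_c\cap(\text{far side})$: the slice $S_L$ must stay away from $0$ by a universal amount, because $0\in S_L$ would force $L(0)\le0 \le 1$ and combined with $L(p')=1$, $v=1$ at $p'$, convexity of $v$, and $L\le(1+\rho^2)v$, one shows $\operatorname{dist}(0,S_L)\ge c$; hence $S_L\cap F_c=\emptyset$, i.e. $S_L\subset \breve F_C\setminus F_c$, which is the claim.

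\textbf{Main obstacle.} The delicate point is the inner inclusion $S_L\subset \complement F_c$, i.e. ruling out that the slice $S_L$ comes too close to the origin. This is where the specific structure of $W_{a,b}$ — in particular that it is $C^{1,a-1}$ but not better along $\{x_1=0\}$, so its subdifferential at boundary points of $F_1$ can be large in the $x_1$-direction but the supporting hyperplanes still do not pinch the origin — must be used quantitatively; a naive convexity argument gives only $\operatorname{dist}(0,S_L)\ge 0$. I expect to handle this by a compactness/homogeneity argument: the family of admissible pairs $(L,p')$ with $L\le\breve v$, $L(p')=v(p')=1$, $p'\in\partial F_1$ is compact modulo the normalization, the quantity $\operatorname{dist}(0,S_L)$ is lower semicontinuous and strictly positive for each such pair (since $L(0)<1=L(p')$ strictly unless $L$ is constant, which is excluded), so it admits a uniform positive lower bound $c$. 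The rest is the routine two-sided comparison of $v$ and $\breve v$ with the model box, which I would not write out in detail.
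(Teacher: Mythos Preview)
Your approach is correct. For the outer inclusion $S_L\subset\breve F_C$ it coincides with the paper's: bound $|DL|$ from $L\le\breve v$ near the point $p'\in\partial F_1$ (note only $1=v(p')\le L(p')\le\breve v(p')=1+\rho^2$, not $L(p')=1$), then invoke the superlinear growth of $v$. For the inner inclusion you run a compactness argument, whereas the paper uses a scaling bootstrap: once the outer bound ``$S_L\cap\partial F_t\neq\emptyset\Rightarrow S_L\subset\breve F_{C_1t}$'' is known at $t=1$, homogeneity $v(\D_t x')=t\,v(x')$ gives it at every scale with the same $C_1$; one then takes $c_1$ with $C_1c_1<1/2$ and argues by contradiction --- if $S_L$ met $F_{c_1}$, convexity of $S_L$ together with $p'\in S_L\cap\partial F_1$ forces $S_L\cap\partial F_{c_1}\neq\emptyset$, so the outer bound at scale $c_1$ yields $S_L\subset\breve F_{C_1c_1}\subset F_{1/2}$, contradicting $p'\in\partial F_1$. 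The paper's route is shorter and avoids any limiting procedure. Your compactness argument is also valid (the degenerate limit $L_\infty(0)=0$ forces $DL_\infty\in\partial\breve v(0)=\{0\}$ since $\nabla W_{a,b}(0)=0$, hence $L_\infty\equiv 0$, contradicting $L_\infty(p'_\infty)\ge 1$), but you should phrase the conclusion as $\inf_{S_L}v\ge c$ rather than $\operatorname{dist}(0,S_L)\ge c$, since $F_c$ is the anisotropic set $\D_cF_1$, not a Euclidean ball.
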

\begin{proof}
	
	Suppose that
	$	p \in  S_L \cap \partial F_1 \neq \emptyset $.
	Then $|p | \leq C$, the upper barrier relation $ L(x') \leq \breve v$ and $L(x') \geq 0$ implies that $\left\|D L\right\|= \left\|D L(p)\right\| \leq C$(see Lemma \ref{lem2.8 lip bounded lemma 2})). While the function $v$ is super-linearity at infinite, thus
	$S_L \subset B_{C}'(0) \subset  \breve F_{C_1}$
	for some universal constant $C_1$. This also implies the opposite relation when $c_1C_1$ is small enough.  Otherwise, if $S_L \cap  F_{c_1} \neq \emptyset$, then $S_L \subset  \breve F_{c_1C_1} \subset F_{\frac{1}{2}}$, which contradicts the assumption $S_L \cap \partial F_1 \neq \emptyset $.
\end{proof}

\begin{Lemma}\label{l10.6}
	Given  points  $p'$ and  denote $t = v(p')$. There exists universal constants $c, C$ such that  if $ s\in [0,c]$ then
	\[	c s^{\frac{1}{2}}\D_t B_1'(0)  \subset S_{st}^{\breve{v}}(p')-p'  \subset  C s^{\frac{1}{2}}\D_t B_1'(0) \]
	and thus
	\[\operatorname{Vol}_{n-1}S_{st}^{\breve{v}}(p') \geq cs^{\frac{n-1}{2}}t^{\frac{n}{2}}.\]
\end{Lemma}
\begin{proof}
	 We may assume   $t=1$. The strict convexity of $v$ implies that for some  small universal constant
 $c$,
	$$ S_c(p') \cap B_c'(0) = \emptyset\;  \text{ and}\;
S_c(0) \subset B_C'(0).$$
	Thus,
	$c \leq   D^2 v \leq C \text{ on } S_c(0)$
	and the lemma follows.
\end{proof}

\begin{Lemma}\label{l10.7}
	Suppose that function $ L(x')$ is a linear function such that $L(x') \leq \breve v$.  There exists universal constants $C$ such that if the section $S_L=\left\{ x'|\,  v\leq L(x') \right\} $ satisfies $S_L \cap \partial F_t \neq \emptyset $, then $\left\| t^{-1}\D_t \cdot D L \right\| \leq C$, and
	\begin{equation}\label{eq9.1}
		\breve{v}( x') -L(x') \leq C(v(x'-q')+t),\quad  \forall  q \in \breve F_{Ct} \setminus  F_{ct}.
	\end{equation}
\end{Lemma}
\begin{proof}
	Suppose that $t=1$. Then $\left\|\D L\right\| \leq C$ and $p' \in B_C'(0)$. Note that $v$ is a homogeneous function which is super-linearity at infinite. Thus, for $C_1$ large enough,
	\[C|x'| \leq  C_1v(x'-q') +C_1,\]
	and
	\[  \breve v(x') \leq C_1v(x'-q') +C_1.\]
	These two inequalities imply our lemma.
\end{proof}

 Now we are in the position to {\sl verify Example \ref{exam:c11}}, which will be completed by three Steps.

\textbf{Step 1}.	Denote $w(x)=W_R^+(x)$ and $E=B_{R}'(0) \times (-\rho, \rho)$, we claim that
\[w\in  C^{1}(E)\cap C_{loc}^{2}(E\setminus\left\{te_n\right\}) ,\]
and
\[  D_1 w(0,x'',x_n)=0 \text{ and }  D_n w(x_1,x'',0)=0.\]

Consider the set $\Gamma$  formed by the intersection of the image of $w$ and the lines $ l $.  Let   $x_0$ be a  endpoint of $l$.  The classical interior strict convexity lemma states that the endpoints of $\Gamma$ is not inside $E$, therefore $x_0$ is on the boundary. Assume by way of contradiction that $x_0'\neq 0$,  then $w$ is $C^{2}$ at  point $x_0$ for some $\epsilon < \frac{2}{n}$, the Pogorelov strict convexity lemma implies that $x_0$ cannot be the extreme point $\Gamma$. Therefore, $\Gamma$ is contained in the axis $e_n$. Thus, $w\in C^{2}(E \setminus \left\{ |x'|=0 \right\})$. Note that (\ref{lem2.8 lip bounded lemma 2}) and \eqref{eq8.1 compare W} imply $w$ is pointwise $C^{1,b-1}$ on axis $e_n$, the claim is completed.

\textbf{Step 2}. Given point $q =(q',q_n) \in E \cap B_{\frac{\rho}{2}}(0)$. Suppose that $q' \neq 0$. Denote $t= v(q')$. Let $S_{h_q}^{{w}}(q)=\left\{ x \in E |\, w(x) \leq w(q)+ \nabla w(q )\cdot (x-q) +h_q\right\} $ be the maximum section  with base point $p$, and $L(x)= w(x) \leq w(q)+ \nabla w(q )\cdot (x-q) +h_q$. We aim to prove that $h_q \approx t$, where $t = v(q') \approx w(q)$.

We claim $h_q \leq Ct$ by proving that
\begin{equation}\label{eq9.2}
	\P S_{h_q}^{{w}}(q) \subset  \breve F_{Ct} \setminus  F_{ct}.
\end{equation}
For each $r \in [-\rho,\rho]$, consider the function
\[ v_r(x') =w(x',r) \text{ and }  L_r(x')=L(x',r),\]
and the sections
\[H_r = \left\{ x' |\,v_r(x') \leq L_r(x) \right\},\]
\[G_r = \left\{ x' |\,v(x') \leq L_r(x) \right\}.\]
By \eqref{eq8.1 compare W}
\[H_r \subset G_r .\]
Since $q$ is near axis $e_n$, the maximum section touches $\partial E$ at point $p \in \left\{ |x_n|=\rho\right\}$.  For simplicity, we assume that $q_n \geq 0$, then
\[ D_n w (q) \geq D_nw(q',0) =0.\]
Therefore, for any constants $r_2 \leq r_1 \leq \rho$,
\[  L_{r_2}(x') \leq  L_{r_1} (x') \leq \breve{v}(x'),\]
and
\[ G_{r_2} \subset G_{r_1}  .\]
Thus, we can assume that $p_n =\rho$. Then
\[   q' \subset G_{q_n}\subset G_{p_n}.\]
Recall Lemma \ref{l10.5}, we obtain
\[     G_r \subset \breve F_{Ct} \setminus  F_{ct}.\]
This is \eqref{eq9.2}.

Next, we show that $h_q \geq ct$.
Let $s \geq 0$. By Lemma \ref{lem:sec vol lbd} ,
\[ \left| S_{h_q+st}^{{w}}(q)\right| \leq C(h_q+st)^{\frac{n}{2}}.\]
Recall \eqref{eq9.2},
\[ u(p)\geq ct.\]
Note that $L_{\rho}$ is the support function of $\breve{v}$ at point $p$. Lemma \ref{lem2.5 balance lemma} and Lemma  \ref{l10.6}  mean
\[ \left| S_{h_q+st}^{{w}}(q) \right|\geq c\rho \operatorname{Vol}_{n-1} \left\{ S_{h_q+st}^{{w}}(q) \cap \left\{x_n =\rho \right\}  \right\} \geq c\rho s^{\frac{n-1}{2}}t^{\frac{n}{2}}.\]
Thus,
\[C(h_q+st)^{\frac{n}{2}} \geq  c\rho s^{\frac{n-1}{2}}t^{\frac{n}{2}}. \]
i.e.
\[\frac{h_q}{t} \geq \sup_s \left\{c(\rho)s^{\frac{n-1}{n}} -s\right\} \geq c. \]

\textbf{Step 3}. We prove that w satisfies
\begin{equation}\label{eq9.3}
	w(x)-w(q)-\nabla w(x)\cdot (x-q)  \leq C(|x_1|^b+|x''|^a +x_n^2)
\end{equation}
for any $q \in B_{\frac{\rho}{2}}(0)$.

The case that $q $ is on axis $e_n$ follows from \eqref{eq8.1 compare W}. We then assume that $q'\neq 0$.

If $x\in S_{h_q}^{{w}}(q) $. Note that $S_{h_q}^{{w}}(q)$ is interior section, the first equation of problem \eqref{eq8.2 local problem example} means that
\[ \left|  S_{h_q}^{{w}}(q ) \right| \approx h_q^{\frac{n}{2}} \approx t^{\frac{n}{2}},\]
and
\[    \left| \breve F_{C^2t} \setminus  F_{c^2t} \times [-\rho,\rho] \right| \leq Ct^{\frac{n}{2}}.\]
Note that $S_{h_q}^{{w}}(q)$ is balanced about $q$. Therefore, according to Lemma \ref{lem2.5 balance lemma},
\[S_{s_0t}^{{w}}(q)-q \approx \D_t B_1'(0) \times [-\rho,\rho].\]
for $s_0 =\frac{h_q}{t} \approx c$. Then the classical interior $C^{1,1}$ regularity results implies  if $s \in [0,s_0]$, then
\begin{equation}\label{eq9.4}
	c s^{\frac{1}{2}}\D_t B_1'(0) \times [-cs^{\frac{1}{2}},cs^{\frac{1}{2}}] \subset S_{st}^{{w}}(q)-q  \subset  C s^{\frac{1}{2}}\D_t B_1'(0) \times [-Cs^{\frac{1}{2}},Cs^{\frac{1}{2}}].
\end{equation}
And \eqref{eq9.4} implies that $\eqref{eq9.3}$.

If $x\notin S_{h_q}^{{w}}(q) $. Then
\[ v(x'-q') \geq ct \text{ outside } S_{h_q}^{{w}}(q).\]
Recall \eqref{eq9.2}, we have $\left\|D_n L\right\| \leq Ct$. By \eqref{eq9.1},
\begin{align*}
	w(x)-w(q)-\nabla w(x)\cdot (x-q)  & \leq 	\breve{v}( x') -L_{q_n}(x')  +Ct  \\
	& \leq C(v(x'-q')+t) \\
	& \leq Cv(x'-q').
\end{align*}
This is \eqref{eq9.3}.

In particular, by virtue of  Lemma \ref{lem2.8 lip bounded lemma 2} and  \eqref{eq9.3}, we see   that $w$ is $C^{1,1-a}$ on the plane $x_1=0$, thus we finish the proof.


\vskip 1cm


\end{document}